\documentclass[12pt]{amsart}
\usepackage[pagebackref=true,colorlinks,linkcolor=blue,citecolor=magenta]{hyperref}
\usepackage{amsmath,amssymb,latexsym} % ams auxiliaries.
\usepackage{graphicx}
\usepackage{fullpage}
\usepackage{enumitem}
\usepackage[title]{appendix}
\usepackage{comment}
\usepackage{etoolbox}
\usepackage[square, numbers,sort&compress]{natbib}
\usepackage{hyperref}
\usepackage{cleveref}
\usepackage{array}
\usepackage{algorithmicx,algcompatible,algorithm}

\usepackage{multirow}

\usepackage{commands}

\theoremstyle{plain}
\newtheorem{theo}{Theorem}
\newtheorem{proposition}[theo]{Proposition}

\newtheorem{lemma}[theo]{Lemma}
\newtheorem{definition}{Definition}

\theoremstyle{remark}
\newtheorem{remark}{Remark}

\newtheorem{hyp}{Hypothesis}

\title{Ground staff shift planning under delay uncertainty at Air France}

\author{Julie Poullet}
\address{\'Ecole polytechnique, Route de Saclay, 91128 Palaiseau, France}
\email{frederic.meunier@enpc.fr}

\author{Axel Parmentier}
\address{A. Parmentier,
Universit\'e Paris Est, CERMICS, 77455 Marne-la-Vall\'ee CEDEX, France}
\email{axel.parmentier@enpc.fr}

\keywords{Stochastic ground staff scheduling, stochastic shift planning, column generation, stochastic resource constrained shortest path, flights delay}

\date{\today}

\begin{document}

\maketitle

\begin{abstract}
	Ground staff agents of airlines operate many jobs at airports such as passengers check-in, planes cleaning, etc. 
	Shift planning aims at building the sequences of jobs operated by ground staff agents, and have been widely studied given its impact on operating costs.
	As these jobs are closely related to flights arrivals and departures, flights delays disrupt ground staff schedules, which leads to high additional costs.
	Our goal is to provide a solution for shift planning at Air France that takes into account these additional costs.
	We therefore introduce a stochastic version of the shift planning problem that takes into accounts the cost of disruptions due to delay, and a column generation approach to solve it. 
	The key element of our column generation is the algorithm for the pricing subproblem, which we model as a stochastic resource constrained shortest path problem.
	Numerical results on Air France industrial instances prove the relevance of the shift planning problem and the efficiency of the solution method. 
	The column generation can solve to optimality instances with up to two hundred and fifty jobs.
	Moving from the deterministic problem to the stochastic one including delay costs enables to reduce the total operating costs by 3\% to 5\% on our instances.
\end{abstract}

%%%%%%%%%%%%%%%%%%%%%%%%%%%%%%%%%%%%%%%%%%%%%%%%%%%%%%%%%%%%%%%%%%%%%%

%!TEX ROOT=./main.tex

\section{Introduction}

\subsection{Context}

Operations research has played an important role in airline management for more that fifty years \citep{barnhart2003applications}. 
Applications span a large spectrum of airlines activities, from revenue management to crew operations, air traffic management, etc. 
In this paper, we focus on \emph{ground staff shift planning}, which consists in building the schedules of ground staff agents.

During the last decades, the rise of global air traffic has been followed by an increase in flights delays. More than 20\% of European flights are delayed by more than fifteen minutes in 2005 \citep{santos2010determinants}.
Disruption of passenger connections, crews, and ground operations is a major source of cost for airlines \citep{cook2015european}.
More specifically, disruptions of ground operations have two negative consequences.
First, ground operations contribute to delay propagation: as a flight cannot take-off if mandatory jobs have not been performed by ground staff, 
a ground staff agent that is delayed on a given job propagates this delay to the flights corresponding to its next jobs.
And second, avoiding this propagation of delay requires additional staff to perform ground operations.
% delay propagates along ground staffs schedules: ground staffs who must operate jobs on a delayed flight can propagate this delay to the next flights on which they are supposed to operate jobs.
Building ground staffs schedules that are resilient with respect to delay is therefore growing challenge for airlines.

This paper is the result of a project initiated by Air France, the main French airline, to build such schedules.

\subsection{Literature review}
\label{sub:literature_review}
%%%%%%%%%%%%%%%%%%%%%%%%%%%%%%%%%%%%%%%%%%%%%%%%%%%%%%%%%%%%%%%%%%%%%%%%%%%%%%%

% \inlAP{ToDo: regarder si des references plus recentes sont citees dans les papiers cites et mettre a jour. Chercher shift/staff/employee/personnel scheduling with robust and stochastic. Cite seminal work of dantzig}

Many different kinds of jobs, also called tasks in the literature, are operated by airline ground staff agents in airports.
% Note that the terminology task is sometimes used to designate what we call a job. 
\citet[Chapter 1]{herbers2005models} describes these jobs and the contribution of operations research to their optimization.
If there is a large variety of jobs, from runway operations to plane cleaning, 
the same kind of optimization approaches can be applied to the different kind of jobs.
However, ad-hoc approaches are sometimes developed to deal with specific jobs such as check-in counters \citep{stolletz2010operational} and cargo facilities~\citep{yan2006long,yan2008stochastic,yan2008short,rong2009shift}.
Ground staff planning in airports is generally done in three stages. 
\emph{Task generation} takes in input the set of flights operated and generates the set of jobs that must be operated by ground staff agents.
\emph{Shift planning} then builds the sequences of jobs or \emph{shifts} that will be operated by agents on a given day. 
At this stage, shifts are not assigned to a specific agent but respect several working rules on working time and breaks.
Finally, \emph{rostering} combines these shifts into rosters spanning an horizon of several days and assign them to agents.
These stages tend to be integrated in recent contributions~\citep{brusco2011integrated,rong2009shift}.
If airplane and crew scheduling lead to problems specific to airlines \citep{barnhart2003applications}, the ground staff scheduling problems mentioned are more classical and well identified in the general literature on workforce scheduling. \citet{ernst2004staff} and~\citet{van2013personnel} provide detailed reviews of this literature.
As delay propagates along shifts operated on a given day, resilience of ground staff operations with respect to delay must be enforced during shift planning.

Shifts can be partitioned into different \emph{shift types}, which are identified by a starting time, an ending time, and different breaks.
Two kinds of approaches are used for shift planning. 
In \emph{demand-level shift planning}, jobs are affected to shift types, but the shifts, i.e.~the sequences of jobs operated for each shift type are not built. This problem is known as the \emph{shift scheduling} problem \citep{dantzig1954letter} in the general workforce literature \citep{ernst2004staff}.
On the contrary, \emph{task-level shift planning} builds the shifts. This problem integrates shift scheduling and interval scheduling \citep{kolen2007interval} problems of the general workforce literature~\citep{ernst2004staff}.
Task-level is more precise that demand-level shift planning, but it also leads to more involved optimization problems.
As the cost of a shift generally depends only on its type, demand-level shift planning is the most frequently used in the general workforce literature and for ground staff scheduling \citep{brusco1995improving,holloran1986united,schindler1993station}. Jobs are then assigned shortly before operations \citep{dowling1997staff}.
However, task-level shift planning has also been proposed for ground staff scheduling \citep{stolletz2010operational}.
As delay propagates along shifts, demand-level shift planning cannot deal with delay propagation, and we use a task-level shift planning approach. 

Mathematical programming, heuristics, and constraint programming are the main solution methods used in workforce optimization~\citep{van2013personnel}, the two first ones being the most frequent. 
For ground staff scheduling, approaches rely on mixed-integer programming~\citep{holloran1986united,rong2009shift,schindler1993station,stolletz2010operational,yan2006long,yan2008short,yan2008stochastic}, matheuristics based on column generation and local search~\citep{brusco1995improving}, and tabu search~\citep{brusco2011integrated}. 
Decomposition methods and column generation approaches such as the one we use are frequent in the general workforce literature~\citep{van2013personnel} but not in ground staff scheduling.

Uncertainty models on demand volume, demand arrival time, and manpower availability have been proposed in personnel scheduling problems~\citep{van2013personnel}. 
Uncertainty in demand has been considered in cargo ground staff scheduling \citep{yan2008stochastic,yan2008short}, and resilience with respect to delay of airplane and crew schedules 
\citep{duck2012increasing,lan2006planning,weide2010iterative,dunbar2014integrated,yen2006stochastic,yan2016robust,ageeva2000approaches}, using stochastic and robust optimization approaches. 
\citep{wu2016airline} has proposed a model for delay propagation taken into account ground staff shifts.
But to the best of our knowledge, no ground staff shift planning approach taking into account delay propagation has been proposed.

\subsection{Contribution}

The first contribution of this paper is a \emph{stochastic ground staff shift planning problem} that can deal with Air France specific context.
The framework is flexible enough to deal with the different kind of jobs operated by ground agents of the company. 
The way delay is propagated along shifts, the online management of delay, and the resulting costs for the airline have been modeled with care.
We notably introduce a modeling hypothesis on delay propagation that fits the way the airline handles delay and simplifies optimization. Scenario based distributions are considered in solution methods.

% This report is the result of a project initiated by Air France, the main French airline, to desi

% take into account flights delay during the ground staff scheduling process.
% Our aim is to design a framework that can build resilient ground staffs schedule for the company. 
% Three components are of particular importance: first, to take into account the specific working rules of the company. 
% Second, to precisely evaluate the cost of a schedule, taking into account how operations are rescheduled online when disrupted by delay. 
% And third, our approach should be flexible enough to tackle with the different kind ground tasks scheduled by the company. baggage check-in, plane cleaning, runaway jobs, etc.

% \paragraph{}
% Our first contribution is a stochastic version of the ground staff scheduling problem that models Air France's way of handling delay. We propose a simplifying hypothesis on delay propagation which clarifies the different cases treated by operation's management and which enables a clear and straightforward modeling.
% \paragraph{}

Second, we provide compact integer formulation for the deterministic task-level shift planning and generalize it to the stochastic task-level shift planning. If similar formulations of the deterministic problem have been considered in the literature, our formulation is tailored for Air France specific problem, and off-the-shelf solvers are able to solve to optimality instances with hundreds of jobs in a few seconds. The stochastic version can solve heuristically instances with a few dozens of jobs, and highlights the difficulty of the stochastic shift scheduling problem.

Our main contribution is a column generation approach to the stochastic shift planning problem that can solve to optimality instances with up to 210 jobs and 200 scenarios in a few minutes, and instances with up to 250 jobs and 200 scenarios in at most a few hours on a standard computer.
To the best of our knowledge, this is the first practically efficient algorithm for shift planning with delay propagation.
% The performance of the approach relies on a p
The pricing subproblem of the column generation build interesting shifts, and the master problem selects the shifts that will be operated.
As delay propagates along shifts, stochasticity is handled in the pricing subproblem.
The performance of our approach lies in the efficiency of the pricing subproblem algorithm.
% Delay propagation is handled in the pricing subproblem, which builds shifts of small expected cost.
We solve this pricing subproblem using a framework for resource constrained shortest path recently introduced by the second author \citep{parmentier2015algorithms}.
We emphasize the the framework does not explain how to model a concrete problem in practice.
Indeed, to successfully apply the framework, we had to introduce a non-trivial algebraic structure to model delay propagation. 
This algebraic structure provides insights on delay propagation and is a contribution on its own.
Furthermore, the framework had never been applied to a concrete stochastic industrial problem, and our work demonstrates the relevance of such an approach.

Finally, we demonstrate the practical relevance of a stochastic approach including the cost of delay: moving from the deterministic problem to the stochastic one enables to reduce the total operating costs by 3\% to 5\% on our instances.
% We are not aware
%  deterministic ground staff scheduling. As mentioned in the literature review, approaches with similar formulation have already been proposed, but none of them can cope with all Air France rules. From this first formulation we derive a simple compact integer formulation for stochastic ground staff scheduling, based on scenarios approach, which responds to Air France's goal while respecting all their rules. Both formulations are general frameworks in which various kind of ground jobs can be modeled.

% \paragraph{}
% Finally we propose a column generation approach to solve the stochastic ground staff scheduling problem. The pricing subproblem has been solved using a monoid resource constrained shortest path problem abstract framework. Additionally we provide a lattice ordered monoid required by the shortest path abstract framework, and which models our concrete problem and enables to take into account delay propagation according to Air France's criteria. The lattice ordered monoid is thus also a contribution of its own as it takes into account delay propagation for concrete scheduling problem, under our simplifying assumption.
% \paragraph{}
% We emphasize that our column generation approach enables to solve to optimality instances with up to 100 scenarios and 250 jobs within a fixed time interval. Practically with the heuristic solution presented in Air France case, we are able to deal with 100 scenarios and 500 jobs in an admissible running time, and this limit is only due to the deterministic problem, which we believe can be easily improved.

\subsection{Organization of the paper}
The deterministic and stochastic shift planning problems are introduced in Section~\ref{sec:ProblemStatement}. 
Section~\ref{sec:compactMIP} provides the compact integer formulation as well as numerical experiments on industrial instances. 
The column generation approach and numerical experiments showing its performance on industrial instances are detailed in Section \ref{sec:CGApproach}. These experiments show the relevance of stochastic shift planning. The algorithm solving the pricing subproblem of the column generation is presented in Section \ref{sec:PrincingSubProblem}. All proofs are postponed to the appendix.

%!TEX ROOT=./main.tex
\section{Problem statement}
\label{sec:ProblemStatement}
We now introduce our ground staff shift planning problem.

%%%%%%%%%%%%%%%%%%%%%%%%%%%%%%%%%%%%%%%%%%%%%%%%%%%%%%%%%%%%%%%%%%%%%%%%%%%%%%%
\subsection{Jobs, shifts, and lunch breaks}
\label{sub:jobs_shifts_and_lunch_breaks}
%%%%%%%%%%%%%%%%%%%%%%%%%%%%%%%%%%%%%%%%%%%%%%%%%%%%%%%%%%%%%%%%%%%%%%%%%%%%%%%
% \inlAP{do not put a minimum duration -- implemented in the cost. Keep the graph}

Ground staff shift planning consists in assigning a given set of \emph{jobs} to teams of skilled agents. 
This set of jobs may be composed of various jobs, such as ensuring baggage check-in, helping to print baggage tag at the interactive kiosk, or being in charge of the boarding. %, as long as these types of jobs can be executed.
% These jobs are operated by staff teams.
Each team starts at a given time,
operates jobs, possibly making a scheduled lunch break in-between two jobs, and finishes at a given time.
This sequence of activities is called a shift.
Roughly speaking, the ground staff shift planning consists in building the shifts, that is, the sequences of jobs operated by the staff teams, in order to operate the jobs at minimum cost.

More formally, we denote by $\calHb$ the finite set of times at which staff members can start working, and by $\calHf$ the finite set of time at which they can stop working.
Let $J$ be the set of \emph{jobs} to be scheduled, and $\brSet$ be the set of possible lunch \emph{breaks}.
An \emph{activity} $j$ is either a job in $J$ or a break in~$B$.
It is characterized by a \emph{fixed time interval} [$\hbtask$,$\hftask$], where $\hbtask$ represents its \emph{beginning time} and $\hftask$ its \emph{ending time}.
In our case, breaks are scheduled between $\tbl$ and $\tel$ and last $\tbr$, hence~$B = \big\{[t,t+\tbr]\colon t \in \{\tbl, \tbl +1 ,\ldots, \tel - \tbr\}\big\}$.

\begin{definition} \label{def:shifts}
A \emph{shift} $\shift$ is 
% defined by a beginning time $\hbshift$ in $\calHb$, an ending time $\hfshift$ in $\calHb$, and 
 a sequence $\tb[\sh],\task^{1},\task^{2}, \ldots, \task^{k}, \te[\sh]$
 % containing at most one break $j_i$ in $\brSet$ 
 such that $\tb[\sh] \in \calHb$, $\te[\sh] \in \calHf$, activity $j^i$ belong to $J \cup \brSet$ for each $i$ in $\{1,\ldots,k\}$, and
\begin{equation*}
    \hbshift \leq\tb[\task^{1}], \quad 
    \te[j^{k}] \leq \hfshift, \quad \text{and} \quad
    \te[j^i] \leq \tb[j^{i+1}] \text{ for all $i$ in } \{1,\ldots,k-1\}.
\end{equation*}
Integers $\tb[\sh]$ and $\te[\sh]$ are respectively the \emph{beginning time} and \emph{ending time} of $\sh$.
\end{definition}

To be operated, a shift must satisfy some working rules of the airline. First, its duration must be non greater than the \emph{maximum duration} $\tm$ in $\bbZ_+$. Second, if the intersection of the shift with $[\tbl, \tel]$ is longer that $\tml$, 
then the staff must be able to take a break. $\tbl$, $\tel$, and $\tml$ are constants in $\bbZ_+$.
% of length at least $\tbr$ inbetween two successive jobs of the lunch break, where $\tbl$, $ \tel$, $\tml$, and $\tb$ are constants in $\bbZ_+$.
% \comAP{We must not use $t$ for time and for job. Maybe replace jobs by jobs describe by $j$}
More formally, a shift  $\sh$ is \emph{feasible} if it satisfies the following  rules.

\begin{enumerate}[label={(\arabic*)}]
    \item \label{rule:Amplitude} $\hfshift - \hbshift \leq \tm$. %\comAP{Conflict of notations with digraph. Changed D for $\tm$}
        \item \label{rule:LunchBreak} if $|[b_{lunch}, f_{lunch}] \cap [\hbshift,\hfshift]| \geq \tml$ then $\brSet \cap \sh \neq \emptyset$.
        %, with $h_{b}^{\task_{0}} = \hbshift$ and $h_{f}^{\task_{i+1}}=\hfshift$.
    % \item \label{rule:LunchBreak} if $|[b_{lunch}, f_{lunch}] \cap [\hbshift,\hfshift]| \geq t_{min}$ then there exits $i $ in $ [0,k] $ such that $ h_{f}^{\task_{i+1}} - h_{b}^{\task_{i}} \geq t_{break}$.%, with $h_{b}^{\task_{0}} = \hbshift$ and $h_{f}^{\task_{i+1}}=\hfshift$.
    %, where $[b_{lunch}, f_{lunch}]$ represents the \emph{lunch break period}, $t_{min}$ the minimum duration spend in that interval to get a right to lunch break, and finally $t_{break}$ the duration of the \emph{lunch break}\label{rule:LunchBreak}. Thus if a shift $\shift$ spends more than $t_{min}$ on the lunch break period delimited by $[b_{lunch}, f_{lunch}]$, it is mandatory to schedule a lunch break whose length is at least $t_{break}$ 
\end{enumerate}
% \inlJP{ $\tml$n'est pas precisemment defini (mais le sens se comprend tres bien!)}
Figure \ref{examplePath} illustrates two shifts, one of them containing a lunch break. 
Yellow rectangles illustrate the shifts time intervals, black lines the jobs time intervals, and blue lines the lunch breaks.
% In Air France's case, rule \ref{rule:Amplitude} is equivalent to $\hfshift - \hbshift \in [6,9]$, as the minimum length of a shift is 6 hours, and the maximum 9h.\\
% Rule \ref{rule:LunchBreak} ensure that for each shift spending more than one hour during lunch interval from 11a.m to 3p.m., the shift contains a lunch break of at least one hour. To simplify what follows, we only consider lunch break among all the required breaks.
% \begin{definition}

\begin{figure}[h]
    \centering
    \begin{tikzpicture}[scale = 1.5]
         
        % %Tasks
        % \draw[line width=0.4mm] (1,-1)-- (2,-1);
        % \draw[line width=0.4mm] (2.5,-1)-- (3,-1);
        % \draw[line width=0.4mm] (5,-1)-- (6,-1);
        
        \draw[line width=0.4mm] (2.4,-2) --(4,-2);
        \draw[line width=0.4mm] (5.5,-2) --(6,-2);
        \draw[line width=0.4mm] (6.2,-2) --(7,-2);
        
        \draw[line width=0.4mm] (5.4,-3) --(6,-3);
        \draw[line width=0.4mm] (6.5,-3) --(8,-3);
        \draw[line width=0.4mm] (8.2,-3) --(8.5,-3);
        \draw[line width=0.4mm] (8.6,-3) --(9,-3);
        \draw[line width=0.4mm] (9.2,-3) --(9.8,-3);

        \draw[dashed] (3.5,-1.2) to ++ (0,-2.2);
        \draw[dashed] (5.3,-1.2) -- ++ (0,-2.2);
        \node[right] at (3.5,-1.2) {$\tbl$};
        \node[right] at (5.3,-1.2) {$\tel$};
        
        % \draw[line width=0.4mm] (2,-4) --(2.5,-4);
        % \draw[line width=0.4mm] (4.2,-4) --(6,-4);
        % \draw[line width=0.4mm] (6.2,-4) --(7,-4);
        
        %Lunch break
        % \draw[line width=0.4mm,blue] (3.5,-1)-- (4.5,-1);
        \draw[line width=0.4mm,blue] (4.2,-2) --(5.2,-2);
        % \draw[line width=0.4mm,blue] (3,-4) --(4,-4);
        
        %rectangle
        % \draw[opacity =0.2,fill = yellow] (0.8,-0.7) -- (6.2,-0.7)--(6.2,-1.3)--(0.8,-1.3) --(0.8,-0.7);
        \draw[opacity =0.2,fill = yellow] (1.8,-1.7)--(7.2,-1.7)--(7.2,-2.3)--(1.8,-2.3)--(1.8,-1.7);
        \draw[opacity =0.2,fill = yellow] (5.2,-2.7)--(10,-2.7)--(10,-3.3)--(5.2,-3.3)--(5.2,-2.7);
        % \draw[opacity =0.2,fill = yellow] (1.8,-3.7)--(7.2,-3.7)--(7.2,-4.3)--(1.8,-4.3)--(1.8,-3.7);

        \node[above] at (1.8,-1.7) {$\hb[\sh_1]$};
        \node[above] at (7.2,-1.7) {$\he[\sh_1]$};
        \node[below] at (5.2,-3.3) {$\hb[\sh_2]$};
        \node[below] at (10,-3.3) {$\he[\sh_2]$};
         %Nodes
        % \node (agent1) at (0,-1) {agent 1};
        \node (agent2) at (1.2,-2) {$\sh_1$};
        \node (agent3) at (1.2,-3) {$\sh_2$};
        % \node (agent1) at (0,-4) {agent 4};
        
        %legend 
        \node[draw,rounded corners =0.10cm, align=left] at (9,-2) {\rule{0.5cm}{1pt} jobs in $J$\\\textcolor{blue}{\rule{0.5cm}{1pt} breaks in $B$}};
    
    \end{tikzpicture} 
    \caption{Example of building shifts covering all the jobs and including lunch break}
    \label{examplePath} 
\end{figure}
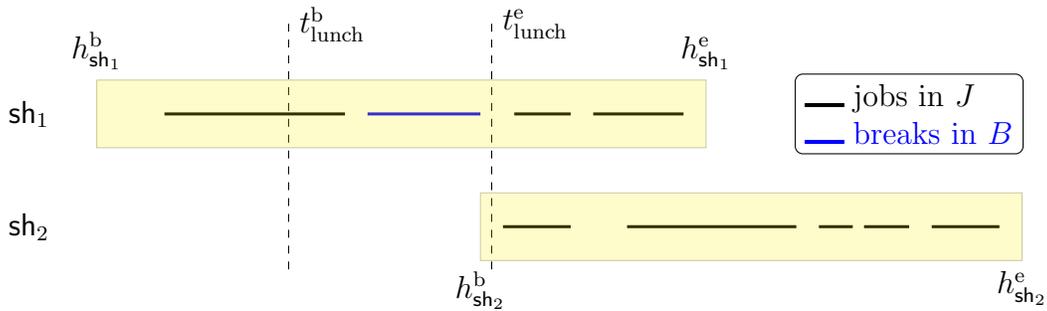

\subsection{Delay and back-up agents}
\label{sub:delay_and_back_up_agents}
%%%%%%%%%%%%%%%%%%%%%%%%%%%%%%%%%%%%%%%%%%%%%%%%%%%%%%%%%%%%%%%%%%%%%%%%%%%%%%%

% \inlAP{We must put an hypothesis on when we know the realization of the random variables. And adapt our rules in consequences.}

% \inlAP{Change a little bit the notations. Make new commands, exchange indices and exponents}
\paragraph{}
When a flight is delayed, the delay is propagated to the jobs associated with the flight. Depending on the type of the job, delay may influence the beginning time, the ending time (e.g. ensuring check-in), or both (e.g. cleaning plane), causing potential issues in the schedule of the agent realizing them as some job successions may not be possible anymore. 
At Air France, in that situation, \emph{back-up agents} are sent by the operations' manager to realize all the jobs that the initial agent cannot realize. 
A job is said to be \emph{rescheduled} if it is operated by a back-up agent.
As rescheduling a job is costly, the airline wants to minimize the number of rescheduled jobs.
A job is rescheduled if~\ref{case:succ} the agent who is supposed to operate it is still doing a previous job when the job starts, if~\ref{case:break} operating it would prevent the agent from taking a lunch break, or if~\ref{case:late} the job is very late. 
The rationale behind this third condition is the following: 
we want to avoid rescheduling due to small delays, but not rescheduling due to large delays.
Indeed, large delay are rare in practice, and it makes sense to reschedule very late jobs, as we would otherwise obtain shifts with unnecessary long slacks between jobs.
% Large delays, which are rare in practice, are manually rescheduled by planners. \inlAP{Est-ce que ca te convient}
% it does not make sense to take into account large delay in shift planning, as such delays are rare, and decisions  
% it would be counterproductive to keep an agent unoccupied for a long time.
Based on~\ref{case:late}, we make the following simplifying assumption.

 % or if it is so late that operating it would impact~(ii) more than one job or~(iii) a lunch break.

% Shift choices impact directly which jobs are rescheduled due to conditions~\ref{case:succ} and~(ii), but not those rescheduled due to condition~(iii). 
% We therefore do not take into account rescheduling due to condition~(iii) when building shift. 

% \smallskip
% \begin{center}
% \begin{minipage}{0.9\textwidth}
% \end{minipage}
% \end{center}
% \smallskip
\begin{hyp}\label{hyp:propagation}
Delay propagates only to the next job under condition~\ref{case:succ} and~(ii).
\end{hyp}
% \inlJP{and under condition~\ref{case:break} ?}
% \emph{
    
%     % A delayed job can impact the next job in the shift, but not the following ones.
% }
\noindent In other words, a job in a shift $\sh$ can be rescheduled due to the job right before in $\sh$, but not due to the previous ones.
Let $\xb[j]$ and $\xe[j]$ be the realized beginning and ending time of a job $j$. 
Both $\xb[j]$ and $\xe[j]$ are random variables from the set of scenarios $\Omega$ to $\bbZ_+$.
Let $\vl[j]$ be a binary random variable indicating if job $j$ is very late. 
%\inlJP{Le pb c'est que ca concerne aussi les taches qui commencent a l'heure et finissent tres tard, donc l'argument est moyen valide. Pq ne carrement pas dire qu on se place dans le cas de petites diruptions? Et le reasonning derriere est de dire que dans le cas de fort retard ce n'est plus la meme methode de backup agent qui est employe?}
Job $j^i$ in a shift $\sh$ is operated by a back-up agent if one of the following conditions is satisfied.
\begin{enumerate}[label={(\roman*)}]
    \item \label{case:succ} job $j^{i-1}$ is operated before $j^i$ and not by a back-up agent, and $\xe[j^{i-1}] > \xb[j^i]$.
    \item \label{case:break} job $j^{i-2}$ and lunch break $j^{i-1}$ are operated before $j^i$ in $\sh$, and $\xe[j^{i-1}] > \xb[j^i] - \tbr$.
    % \inlJP{Considering definition of well scheduled break, makes more sense to say $\xe[j^{i-1}] > \xb[j^i] - \tbr$}
    \item \label{case:late} $\vl[j] = 1$.
    % 
    % % \item \label{case:late}
    % \item[\mylabel{case:late}{(ii-f)}] $j_i$ is a fixed duration job, and $\xb[j^i] + \tau_j > \xb[j^{i+2}]$. 
    % \item[\mylabel{case:lateVar}{(ii-v)}] \label{case:lateVar} $j_i$ is a variable duration job, and $\xe[j^i] > \xb[j^{i+2}]$.      
    % %\inlAP{Mettre à $\tb[j_{i+2}]$ -- check}
    % \item[\mylabel{case:lunch}{(iii-f)}]   $j_i$ is a fixed duration job, break $j^{i+1}$ in $B$ is operated after $j^i$, and $\xb[j^i] + \tau_j > \tb[j^{i+1}]$. 
    % \item[\mylabel{case:lunchVar}{(iii-v)}]   $j_i$ is a variable duration job, break $j^{i+1}$ in $B$ is operated after $j^i$, and $\xe[j^i] > \tb[j^{i+1}]$. 
    % % \item \label{case:lateb} Alternative: $\xb[j^i] + \tub[ji] > \te[j_{i+1}]$. and $\xb[j^i] \leq \tb[j_{i+1}]$. (ne convient pas)
\end{enumerate}

Figure~\ref{fig:DelayExample} illustrates how these conditions work under several delay scenarios. 
Under scenario $\omega_1$, job $j^2$ is operated by a back-up agent due to condition~\ref{case:late}, while under scenario $\omega_2$, a back-up agent is needed for $j^3$ due to~\ref{case:succ}. Under scenario $\omega_3$, job $j^2$ and $j^3$ still cannot be operated by the same agent. 
But as $j^2$ is operated by a back-up agent, $j^3$ can be operated by the initial agent.

% The rationale behind these conditions is the following. 
% In case~\ref{case:succ}
% the agent that is supposed to operate $j_i$ is still occupied on its previous job when $j_i$ starts, while
% in case~\ref{case:late}, job $j_i$ is very late,
% and it is no use to keep the agent waiting for $j_i$ while sending back-up agents on the next jobs it is supposed to do.
% We take the condition $\xe[j_i] > \tb[j_{i+2}]$ and not $\xe[j_i] > \xb[j_{i+2}]$ because, in practice, $\xb[j_{i+2}]$ is not known when the decision must be taken at $\xb[j]$. 

  \begin{figure}[h]
  \centering
   \begin{tikzpicture}[scale = 1.5]
        \tikzset{
            job/.style={thick},
            arc/.style={->, thick, draw=green, dotted},
            ome/.style={red},
            bu/.style={draw=blue, dashed}
        }

         %Tasks
         \def\l{0.8}
         \def\h{-1.2}

        \node at (1*\l,0*\h) {No delay};

        \node (b1a) at (2*\l,0*\h) {};
        \node (e1a) at (3*\l,0*\h) {};
        \node (b2a) at (3.5*\l,0.3*\h) {};
        \node (e2a) at (6*\l,0.3*\h) {};
        \node (b3a) at (6.4*\l,0*\h) {};
        \node (e3a) at (8*\l,0*\h) {};

        \draw[job] (b1a.center) to node[midway, above] {$j^1$} (e1a.center);
        \draw[job] (b2a.center) to node[midway, below] {$j^2$} (e2a.center);
        \draw[job] (b3a.center) to node[midway, above] {$j^3$} (e3a.center);
        \draw[arc] (e1a.center) -- (b2a.center);
        \draw[arc] (e2a.center) -- (b3a.center);

        \node at (1*\l,1*\h) {Scenario $\omega_1$};

        \node (b1a) at (2*\l,1*\h) {};
        \node (e1a) at (3*\l,1*\h) {};
        \node (b2a) at (3.5*\l+2.2*\l,1.3*\h) {};
        \node (e2a) at (6*\l+2.2*\l,1.3*\h) {};
        \node (b3a) at (6.4*\l,1*\h) {};
        \node (e3a) at (8*\l,1*\h) {};

        \draw[job] (b1a.center) to node[midway, above] {$j^1$} (e1a.center);
        \draw[job,bu] (b2a.center) to node[midway, below] {$j^2$} (e2a.center);
        \draw[job] (b3a.center) to node[midway, above] {$j^3$} (e3a.center);
        % \draw[arc] (e1a.center) -- (b2a.center);
        \draw[arc] (e1a.center) to[bend left=10] (b3a.center);

        \node at (1*\l,2*\h) {Scenario $\omega_2$};

        \node (b1a) at (2*\l,2*\h) {};
        \node (e1a) at (3*\l,2*\h) {};
        \node (b2a) at (4.2*\l,2.3*\h) {};
        \node (e2a) at (6.7*\l,2.3*\h) {};
        \node (b3a) at (6.4*\l,2*\h) {};
        \node (e3a) at (8*\l,2*\h) {};

        \draw[job] (b1a.center) to node[midway, above] {$j^1$} (e1a.center);
        \draw[job] (b2a.center) to node[midway, below] {$j^2$} (e2a.center);
        \draw[job,bu] (b3a.center) to node[midway, above] {$j^3$} (e3a.center);
        \draw[arc] (e1a.center) to (b2a.center);
        \draw[arc,ome] (e2a.center) to node[midway]{/} (b3a.center);

        \node at (1*\l,3*\h) {Scenario $\omega_3$};

        \node (b1a) at (2.9*\l,3*\h) {};
        \node (e1a) at (4.5*\l,3*\h) {};
        \node (b2a) at (4.2*\l,3.3*\h) {};
        \node (e2a) at (6.7*\l,3.3*\h) {};
        \node (b3a) at (6.4*\l,3*\h) {};
        \node (e3a) at (8*\l,3*\h) {};

        \draw[job] (b1a.center) to node[midway, above] {$j^1$} (e1a.center);
        \draw[job,bu] (b2a.center) to node[midway, below] {$j^2$} (e2a.center);
        \draw[job] (b3a.center) to node[midway, above] {$j^3$} (e3a.center);
        \draw[arc,ome] (e1a.center) to node[midway]{$\backslash$} (b2a.center);
        \draw[arc,ome] (e2a.center) to node[midway]{/} (b3a.center);
        \draw[arc] (e1a.center) to[bend left=10] (b3a.center);

        % %legend
        \node[draw,rounded corners = 0.10cm, align = left] at (10.5*\l,-2) {
        \tikz[baseline=-0.5ex]{\draw[job] (0,0) -- (0.7,0);} initial agent \\ 
        \tikz[baseline=-0.5ex]{\draw[job,bu] (0,0) -- (0.7,0);} back-up agent\\
        \tikz[baseline=-0.5ex]{\draw[arc] (0,0) -- (0.7,0);} operated succession \\
        \tikz[baseline=-0.5ex]{\draw[arc,ome] (0,0) to node[midway]{/} (0.7,0);} broken succession 
        };
    \end{tikzpicture} 
    \caption{Example of delays requiring a backup agent}
    \label{fig:DelayExample}
\end{figure}
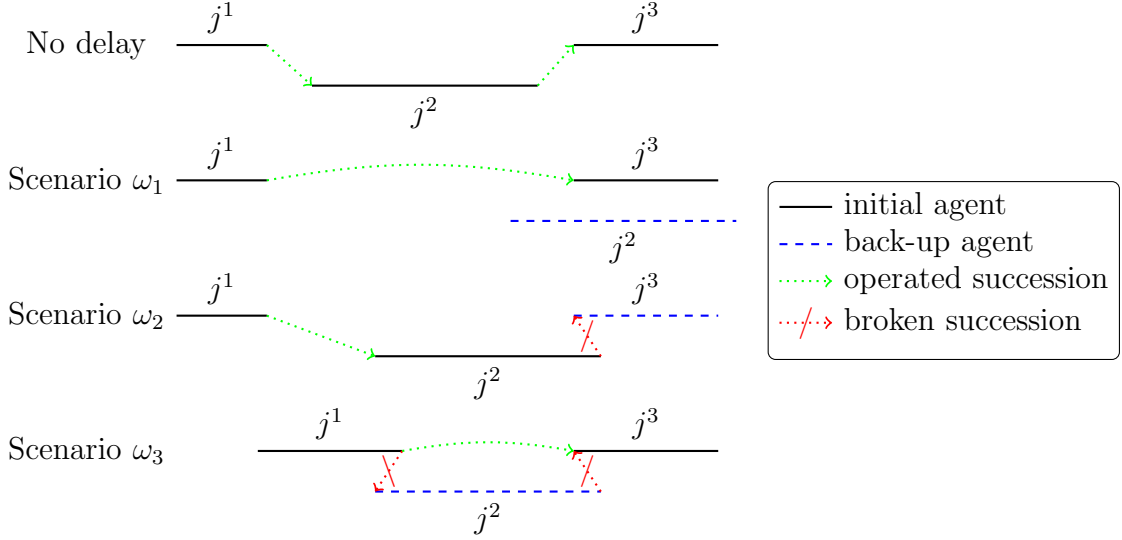
% \inlJP{Scenario $\oscenario_1$, je ne vois pas comment tu peux affirmer cela compte tenu que on ne voir pas $j_4$}

\begin{remark}
Our approach applies to different kinds of ground staff jobs.
The definition of a very late job, that is, of $\vl[j]$, depends on the kind of job $j$ that is operated. The duration of some jobs such as cleaning the plane is deterministic while the duration of others such as boarding passengers is random. 
Some operations realized on the planes cannot be interrupted for security reasons, while some other jobs such as being at the checking-board can be operated by several successive teams of agents.
For clarity, we consider here variables $\vl[j]$ as an input.

% We therefore
% cleaning a plane takes a fixed amount of time but should be completely operated by the same team of agent, as changing the team is very inefficient. 
% The definition of $\zeta[j]$ for this kind of task can therefore be defined based solely on $\xb[j]$.
% On the contrary, as the time at which the time at which the check-in board 

% The rescheduling policy defined by conditions~\ref{case:succ} to~\ref{case:lunchVar} is non-anticipative.
% Indeed, let $(\calF_t)_t$ the filtration associated to time.
% Variables $\xb[j]$ and $\xe[j]$ are stopping times with respect to $(\calF_t)_t$.
% Decisions~\ref{case:succ}, \ref{case:late}, and~\ref{case:lunch} must be taken at $t_b^j$, and conditions $\xe[j^{i-1}] > \xb[j^i]$, $\xb[j^i] + \tau_j > \xb[j^{i+2}]$, and $\xb[j^i] + \tau_j > \tb[j^{i+1}]$ are measurable with respect to $\calF_{\xb[j^i]}$.
% Decision~\ref{case:lateVar} must be taken at $\xb[j^{i+2}]$ and is measurable with respect to $\calF_{\xb[j^{i+2}]}$. 
% And finally decision~\ref{case:lunchVar} must be taken at $\tb[j^{i+1}]$ and is measurable with respect to $\calF_{\tb[j^{i+1}]}$.

\end{remark}

\subsection{Costs}
\label{sub:costs}
%%%%%%%%%%%%%%%%%%%%%%%%%%%%%%%%%%%%%%%%%%%%%%%%%%%%%%%%%%%%%%%%%%%%%%%%%%%%%%%

Operating a shift $\sh$ generates two sources of costs for the airline.
The first one comes from agents wage and is a non-decreasing function of the total duration $\cw(\he[\sh] - \hb[\sh])$. Function $\cw(\cdot)$ is illustrated on Figure~\ref{fig:costOfShift}. 
The second one is due to a fixed cost $\cbu$ incurred for each rescheduled job.
Let $\nbu[\sh]$ be the (random) number of jobs of $\sh$ rescheduled.
The expected cost $c_{\sh}$ of a shift is
\begin{equation}\label{eq:shiftCost}
    c_{\sh} = \cw(\he[\sh] - \hb[\sh]) + \bbE\big(\cbu \nbu[sh]\big).
    % c_{\sh} = \cw(\he[\sh] - \hb[\sh]) + \cbu \cdot\bbE\big(\sharp\{\text{jobs of $\sh$ operated by a backup agent}\}\big)
\end{equation}
In our results, $\cbu$ is equal to two hours of a regular agent.
% \begin{remark}

% \end{remark}
% We do not take into account jobs rescheduled due to~\ref{case:late} in the cost, as these rescheduling are n

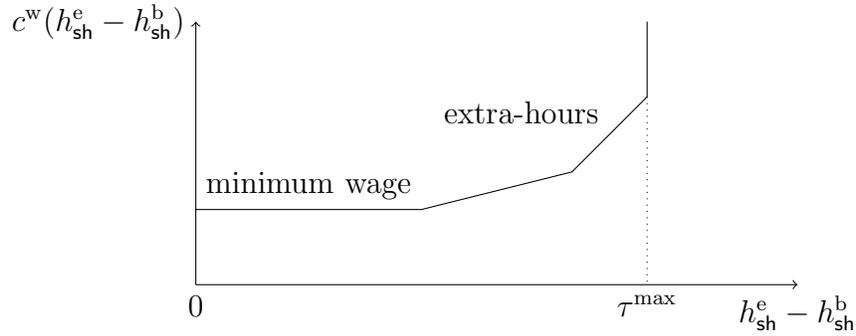
\begin{figure}
\begin{center}

\begin{tikzpicture}

\def\l{1}
\def\h{0.5}

\draw[->] (0*\l,0*\h) node[below] {0} -- (8*\l,0*\h) node[below]{$\he[\sh] - \hb[\sh]$};
\draw[->] (0*\l,0*\h) -- (0*\l,7*\h) node[left] {$\cw(\he[\sh] - \hb[\sh])$};

\draw (0*\l,2*\h) -- node[midway,above] {minimum wage} (3*\l,2*\h) -- (5*\l,3*\h) -- node[midway, above left] {extra-hours} (6*\l,5*\h) -- (6*\l,7*\h);
\draw[dotted] (6*\l,5*\h) to (6*\l,0*\h) node[below] {$\tm$};
\end{tikzpicture}
\end{center}
\caption{Cost of a shift as a function of duration.}
\label{fig:costOfShift}
\end{figure}

% \inlJP{Cost linear in time. Does not work without minimum shift length}
% \inlAP{Explain that there is a minimum cost for shift --  out a Figure }

%%%%%%%%%%%%%%%%%%%%%%%%%%%%%%%%%%%%%%%%%%%%%%%%%%%%%%%%%%%%%%%%%%%%%%%%%%%%%%%
\subsection{Well-scheduled breaks}
\label{sub:well_scheduled_breaks}
%%%%%%%%%%%%%%%%%%%%%%%%%%%%%%%%%%%%%%%%%%%%%%%%%%%%%%%%%%%%%%%%%%%%%%%%%%%%%%%

A feasible $\sh$ is \emph{well-scheduled} if it contains at most one break, and any break $j$ in $\sh$ satisfies
\begin{align*}
\te[j] = \left\{ 
 \begin{array}{ll}
\min(\te[\sh],\tel) \enskip& \text{if $j$ is the last activity of $\sh$,} \\
 \min(\tb[j'],\tel) & \text{otherwise, where $j'$ is the activity after $j$ in $\sh$.}
 \end{array}\right.
\end{align*} 
% Let $\calS$ be the set of well-scheduled shifts.
We denote by $\calSH$ the set of well scheduled shifts.
The following proposition shows that we can restrict ourselves to well-scheduled shifts, and is proved in appendix.

\begin{proposition}\label{prop:wellScheduled}
Given an arbitrary feasible shift $\sh$, there exists a well-scheduled shift $\sh'$ containing the same jobs and such that $c_{\sh'} \leq c_{\sh}$.
\end{proposition}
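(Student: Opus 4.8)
The plan is to transform an arbitrary feasible shift $\sh$ into a well-scheduled one by a sequence of local modifications, each of which preserves the job set, keeps feasibility, and does not increase the expected cost $c_{\sh}$. There are two things to fix: (a) a well-scheduled shift has \emph{at most one} break, and (b) any break it contains must end exactly at the prescribed time ($\min(\te[\sh],\tel)$ if it is last, otherwise $\min(\tb[j'],\tel)$ where $j'$ follows the break). I would treat these one at a time.

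\textbf{Step 1: reduce to at most one break.} Suppose $\sh$ contains two or more breaks. By feasibility rule~\ref{rule:LunchBreak}, only one break is ever \emph{required}, so I would argue that all but one break can simply be deleted from the sequence. Deleting a break $j^i$ (and splicing $j^{i-1}$ directly before $j^{i+1}$) keeps the ordering constraints $\te[j^{i-1}]\le\tb[j^{i+1}]$ since $\te[j^{i-1}]\le\tb[j^i]\le\te[j^i]\le\tb[j^{i+1}]$; it leaves $\hb[\sh],\he[\sh]$ unchanged so rule~\ref{rule:Amplitude} and the wage term $\cw(\he[\sh]-\hb[\sh])$ are untouched; and it does not increase $\nbu$ because removing a break can only remove an occurrence of condition~\ref{case:break} and can only help condition~\ref{case:succ} (a job right after a deleted break now has a genuine job, not a break, before it — I need to check the bookkeeping that $\nbu[\sh']\le\nbu[\sh]$ pointwise on each scenario $\omega$, which is the one slightly delicate point). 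Keep exactly one break if rule~\ref{rule:LunchBreak} demands it, otherwise delete all of them.

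\textbf{Step 2: normalize the single break's position.} Now $\sh$ has at most one break $j^i=[t,t+\tbr]$. I would replace it by the break $\tilde j = [\tilde t,\tilde t+\tbr]$ whose ending time equals the prescribed value $\tau := \min(\te[\sh],\tel)$ (if $j^i$ is last) or $\tau := \min(\tb[j^{i+1}],\tel)$ (otherwise) — i.e.\ push the break as late as possible within its legal window. This is a valid break in $B$ provided $\tilde t \in \{\tbl,\dots,\tel-\tbr\}$, which follows because $\te[j^{i-1}]\le\tb[j^i]$ gives enough room on the left and $\tau\le\tel$ gives the right endpoint constraint; I should check the corner case where the window is too tight and handle it (it won't arise for a feasible shift that actually contains a break because rule~\ref{rule:LunchBreak} guarantees the window has width $\ge\tbr$). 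Moving the break later weakly increases $\te[\tilde j]$ over $\te[j^i]$, which in condition~\ref{case:break} (the clause $\xe[j^{i-1}] > \xb[j^i]-\tbr$, involving the job \emph{before} the break, not the break's own time) — wait, I need to re-read: condition~\ref{case:break} for $j^{i+1}$ reads $\xe[j^{i-1}] > \xb[j^{i+1}] - \tbr$ and involves $j^{i-1}$ and $j^{i+1}$, \emph{not} the break's clock time at all, so changing the break's position leaves $\nbu$ invariant pointwise. Hence $c_{\tilde\sh} = c_{\sh}$ (actually equal, not just $\le$), $\hb,\he$ unchanged, ordering constraints still hold because we only slid the break within the gap. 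Combining Steps 1 and 2 produces the desired $\sh'$ with $c_{\sh'}\le c_{\sh}$.

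\textbf{Main obstacle.} The genuinely careful part is the pointwise (scenario-by-scenario) accounting in Step~1 that deleting a break does not create a new rescheduled job. One must verify that after splicing, for the job that newly follows the deleted break, conditions~\ref{case:succ}--\ref{case:late} are satisfied no more often than before, and — because "operated before ... and not by a back-up agent" in~\ref{case:succ} depends recursively on upstream rescheduling decisions — that this comparison propagates correctly down the whole shift. I expect this to need a short induction along the shift, using Hypothesis~\ref{hyp:propagation} (delay only propagates to the immediately next job) to keep the recursion local and the comparison monotone. Everything else (feasibility of the modified sequences, invariance of the wage term, membership of the new break in $B$) is routine interval arithmetic.
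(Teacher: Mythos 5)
Your overall route is the same as the paper's: delete all but (at most) one break, then slide the surviving break to the prescribed position. The wage term, the feasibility rules, and your Step~2 observation that conditions \ref{case:succ}--\ref{case:late} never reference the break's own clock time (so repositioning it leaves the rescheduling pattern unchanged scenario by scenario) all match how the paper handles those parts.

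The gap is in the justification you sketch for Step~1. It is not true that deleting a break ``can only help condition~\ref{case:succ}'' nor that the comparison ``propagates monotonically'': the \emph{set} of rescheduled jobs is not monotone under break deletion. Concretely, suppose the break sits between $j_i$ and $j_{i+1}$, that $j_i$ is not rescheduled, and that $\xe[j_i] \le \xb[j_{i+1}] < \xe[j_i] + \tbr$ under some scenario. With the break, $j_{i+1}$ is rescheduled by \ref{case:break}; without it, $j_{i+1}$ is \emph{not} rescheduled, so $j_{i+2}$ is now tested against $j_{i+1}$ under \ref{case:succ} and may become rescheduled where it was not before (previously its predecessor was handled by a back-up agent, so \ref{case:succ} did not apply to it). The perturbation cascades down the maximal chain of conflicting successions $j_{i+1},\dots,j_{i+m}$, alternating which jobs are rescheduled: with the break the odd-indexed ones of the chain are rescheduled, without it the even-indexed ones are. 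The counts are $\lceil m/2\rceil$ versus $\lfloor m/2\rfloor$, which is the actual reason the number of rescheduled jobs does not increase --- it drops by one when $m$ is odd and is unchanged when $m$ is even. This parity argument is precisely the content of the paper's proof, and it is the piece your proposed ``short induction keeping the comparison monotone'' would not deliver: an induction aiming to show containment of the rescheduled sets already fails at $j_{i+2}$. You correctly flagged this as the delicate point, but the resolution you anticipate is the wrong invariant; the right one is a count comparison over the alternating chain, not a pointwise set comparison.
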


%  And changing the beginning and ending time of a break scheduled between two given jobs does not change the jobs that will be rescheduled.
%   and any job operated by a back-up agent in $\sh'$ under scenario $\omega$ is also operated by a back-up agent in $\sh$ under $\omega$, which gives $c_{\sh'} \leq c_{\sh}$.
% \hfill\Halmos\endproof

%%%%%%%%%%%%%%%%%%%%%%%%%%%%%%%%%%%%%%%%%%%%%%%%%%%%%%%%%%%%%%%%%%%%%%%%%%%%%%%
\subsection{Problem statement}
\label{sub:problem_statement}
%%%%%%%%%%%%%%%%%%%%%%%%%%%%%%%%%%%%%%%%%%%%%%%%%%%%%%%%%%%%%%%%%%%%%%%%%%%%%%%

The \emph{stochastic ground staff shift planning problem} \eqref{pb} consists in finding a set $\calS$ of well-scheduled shifts of minimum cost and such that each job in $J$ is operated by at least one shift in $\calS$
% \begin{equation}\label{pb}\tag{PB}
%     \min_{\calS \subseteq \calSH}\sum_{\sh \in \calS} c_{\sh} \quad\text { sucbject to to } 
% \end{equation}
\begin{equation}\label{pb}\tag{PB}
    \min \bigg\{\sum_{\sh \in \calS} c_{\sh} \colon  \calS \subseteq \calSH \text{ and there is $\sh \in \calS$ such that $j \in \sh$ for each }j \in J \bigg\}.
\end{equation}

\section{Compact Mixed Integer Linear Program}
\label{sec:compactMIP}

%%%%%%%%%%%%%%%%%%%%%%%%%%%%%%%%%%%%%%%%%%%%%%%%%%%%%%%%%%%%%%%%%%%%%%%%%%%%%%%
\subsection{Shift digraph}
\label{sub:shift_digraph}
%%%%%%%%%%%%%%%%%%%%%%%%%%%%%%%%%%%%%%%%%%%%%%%%%%%%%%%%%%%%%%%%%%%%%%%%%%%%%%%
% \inlAP{replace $e$ by $\el$ in the complete section (notation conflict with monoid)}

We first explain how well-scheduled shifts can be modeled as paths in an acyclic directed graph. This will make the description of the integer program a straightforward job. 
Let $\{\mathsf{bl},\mathsf{al}\}$ be a set of two elements, $\mathsf{bl}$ and $\mathsf{al}$ respectively stand for before lunch and after lunch.
Let
% \begin{alignat*}{5}
%     \Jbl[h] &= \big\{j\in J \colon   &h&\leq \tb[j] &\text{ and }&& \te[j] + \tbr &\leq \tel &&\big\} 
%      \\
%     \text{and} \quad \Jal[h] &= \big\{j\in J \colon &\tbl + \tbr &\leq \tb[j] &\text{ and } &&\te[j] &\leq h + \tm &&\big\}
% \end{alignat*}{3}
\begin{alignat*}{5}
    \Jbl[ \hb]= \left\{j\in J  
    \left| \begin{array}{l}
    \tb[j] \geq  \hb \\ 
     \te[j] \leq \tel - \tbr \\
     \te[j]\leq \hb + \tm
    \end{array}\right.\right\}
    \quad \text{and} \quad 
    \Jal[\hb]= \left\{j\in J 
    \left|
    \begin{array}{l}
    \tb[j] \geq  \hb \\
    \tb[j] \geq \tbl + \tbr\\
    \te[j]\leq \hb + \tm
    \end{array}
    \right.\right\}
\end{alignat*}
be respectively the set of jobs that can be operated before or after the lunch break in a shift starting at $\hb$. 
The set of vertices is
\begin{equation*}
    V = \{o\} \cup \left(\bigcup_{\hb \in \calHb} \Jbl[\hb] \times \{\hb \} \times \{\mathsf{bl}\}\right) 
    \cup \left(\bigcup_{\hb \in \calHb}\Jal[\hb] \times \{\hb\} \times \{\mathsf{al}\} \right) \cup \calHf \cup \{d\},
\end{equation*}
where $o$ and $d$ are respectively an origin and a destination vertex.
The set of arcs $A$ contains the following ordered pairs of $V\times V$.
\begin{itemize}
    \item $\big(o,(j,\hb,\bl)\big)$ for each $\hb \leq \tel - \tml$ in $\calHb$ and $j$ in $\Jbl[\hb]$.
    % \item $\big(\hb, (j,\{\bl\})\big)$ for each $\hb$ in $\calHb$ satisfying $\hb + \tml \leq \tel$ and $j$ in $J$ such that  $\hb \leq \hb[j]$.
    \item $\big(o, (j,\hb,\al)\big)$ for each $\hb > \tel - \tml$ in $\calHb$ and $j$ in $\Jal[\hb]$.
    \item $\big((j,\hb,\bl), (j',\hb,\bl)\big)$ for each $\hb\in\calHb$ and $j$, $j'$ in $\Jbl[\hb]$ such that $\te[j] \leq \tb[j']$.
    \item $\big((j,\hb,\bl), (j',\hb,\al)\big)$ for each $\hb\in\calHb$, $j$ in $\Jbl[\hb]$, and $j'$ in $\Jal[\hb]$ such that $\te[j] + \tbr \leq \tb[j']$.
    \item $\big((j,\hb,\al), (j',\hb,\al)\big)$ for each $\hb\in\calHb$ and $j$, $j'$ in $\Jal[\hb]$ such that $\te[j] \leq \tb[j']$.
    \item $\big((j,\hb,\bl),\he\big)$ for each $\he < \tbl + \tml$ in $\calHf$ and $j$ in $J$ satisfying $\te[j] \leq \he \leq \hb + \tm$.
    \item $\big((j,\hb,\bl),\he\big)$ for each $\he \geq \tbl + \tml$ in $\calHf$ and $j$ in $J$ satisfying $\te[j] + \tbr \leq \he \leq \hb + \tm$.
    \item $\big((j,\hb,\al),\he\big)$ for each $\he \geq \tbl + \tml$ in $\calHf$ and $j$ in $J$ satisfying $\te[j] \leq \he \leq \hb + \tm$.
    \item $\big(\he,d\big)$ for each $\hb$ in $\calHf$.
\end{itemize}
For each $j \in J$, let $V_j = \big\{(j,\hb,\el) \in V\colon \hb \in \calHb \text{ and } \el \in\{\bl,\al\}\big\}$, and for each arc $a$ in $A$, let 
$$ c_a = 
 \left\{\begin{array}{ll}
 \cw(\he - \hb) \enskip & \text{if $a$ is of the form $\big((j,\hb,\el),\he\big)$,} \\
 0 & \text{otherwise.}
 \end{array}\right.
 $$ 

\begin{proposition}\label{prop:bijectionDigraph}
There is a bijection between well-scheduled shifts and $o$-$d$ paths in $D$, which associates to a well-scheduled shift $\sh$ an $o$-$d$ path $P$ such that a job $j $ is in $\sh$ if and only if $P$ intersects $V_j$.
% $(j,\hb,e)$ is in $P$ for some $\hb$ in $\calHb$ and $e \in \{\bl,\al\}$, and $\cw(\te[\sh] - \tb[\sh]) = \sum_{a \in P}c_a$.
\end{proposition}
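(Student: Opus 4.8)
The plan is to establish the bijection by constructing explicit maps in both directions and checking they are mutually inverse, while verifying that each map respects the stated job-intersection property.

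First I would define the forward map $\Phi$ sending a well-scheduled shift $\sh = \tb[\sh], j^1, \ldots, j^k, \te[\sh]$ to an $o$-$d$ path. Since $\sh$ is well-scheduled, it contains at most one break, so there is an index $p \in \{0,1,\ldots,k\}$ such that $j^1,\ldots,j^p$ are jobs, $j^{p+1}$ is the break if $p < k$ and there is a break, and $j^{p+2},\ldots,j^k$ are the jobs after the break. (If there is no break, set $p = k$.) I would then map the jobs before the break to vertices $(j^i, \tb[\sh], \bl)$ and the jobs after to $(j^i, \tb[\sh], \al)$, prepend $o$, and append the vertices $\te[\sh] \in \calHf$ and $d$. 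The work here is to check that each such vertex actually lies in $V$ — i.e.\ that $j^i \in \Jbl[\tb[\sh]]$ for the pre-break jobs and $j^i \in \Jal[\tb[\sh]]$ for the post-break jobs — and that each consecutive pair is an arc of $A$. These checks follow by unfolding the definitions of $\Jbl, \Jal$, the shift feasibility rules \ref{rule:Amplitude} and \ref{rule:LunchBreak}, the ordering constraints in Definition~\ref{def:shifts}, and the well-scheduledness condition on the break's ending time; the break's presence when the intersection with $[\tbl,\tel]$ is long is exactly what forces the path through a $\bl \to \al$ arc when needed, and the ``well-scheduled'' constraint on $\te[j]$ for the break is what makes the break position recoverable from the shift (so the map is well-defined without having to record the break explicitly in the path).

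Next I would define the inverse map $\Psi$. Given an $o$-$d$ path $P$, strip $o$, the terminal $\he \in \calHf$, and $d$; the remaining internal vertices are all of the form $(j^i, \hb, \el)$ with a common $\hb$ (the arc definitions never change the $\hb$ coordinate once we leave $o$, and only the first arc from $o$ fixes it), and the $\el$-coordinates are a block of $\bl$'s followed by a block of $\al$'s. I recover a shift by taking $\tb[\sh] = \hb$, $\te[\sh] = \he$, listing the jobs $j^1, \ldots, j^k$ in path order, and inserting a break — placed so its ending time satisfies the well-scheduled condition — between the last $\bl$-vertex and the first $\al$-vertex (or at the end, in the degenerate cases dictated by the arc types used into $\he$). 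One then checks this shift is feasible and well-scheduled: rule \ref{rule:Amplitude} comes from the constraint $\te[j] \le \hb + \tm$ built into $\Jbl, \Jal$ and the arc conditions into $\calHf$; rule \ref{rule:LunchBreak} comes from the fact that the only arcs from $o$ into an $\al$-vertex, or from a $\bl$-vertex directly to a late $\he$ without a break, are precisely those where the lunch-intersection is short. Finally I would check $\Phi$ and $\Psi$ are mutually inverse and that $j \in \sh \iff P \cap V_j \ne \emptyset$, which is immediate from the construction since the internal vertices of $P$ are in bijection with the activities of $\sh$ that are jobs.

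The main obstacle I anticipate is the bookkeeping around the break: because a well-scheduled shift's break has a deterministically-forced ending time, the break does not appear as its own vertex in $D$, so I must argue carefully that (a) from the shift the break position is determined by where the $\bl$-block ends, (b) from the path the break can be reinserted with a legal ending time, and (c) these two operations are inverse — in particular that the various boundary cases (no break; break as the last activity; shifts entirely before or entirely after lunch; the threshold cases $\hb \lessgtr \tel - \tml$ and $\he \lessgtr \tbl + \tml$) all match up with the corresponding arc types in the list defining $A$. Getting a clean case analysis that covers exactly these arc types, with no gaps or overlaps, is where the real care is needed; the rest is routine unfolding of definitions.
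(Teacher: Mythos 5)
Your proposal is correct and follows essentially the same route as the paper's proof: explicit constructions in both directions (jobs before/after the break mapped to $\bl$/$\al$ vertices with a common $\hb$, and the break reinserted from the path either at the unique $\bl\to\al$ arc or at the end when the final arc into $\he$ has $\he \geq \tbl + \tml$), together with verification of rules \ref{rule:Amplitude} and \ref{rule:LunchBreak} and of mutual inverseness. The delicate point you flag --- that the break is not a vertex of $D$ and must be recovered from the well-scheduledness condition on its ending time --- is exactly the point the paper's proof handles with its two break-insertion cases.
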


% \inlAP{Update to take into account finishing by a break}
% \inlJP{I did it, now can  finish by a break}
% Let $D = (V,A)$ be the digraph defined as follows. 
% be respectively the set of jobs that can be operated before or after the lunch break. 
% The set of vertices is
% where $o$ and $d$ are respectively an origin and a destination vertex. 
% The set of arcs $A$ contains the following ordered pairs of $V\times V$.
% \begin{itemize}
%     \item $\big(o,h\big)$ for each $h$ in $\calHb$.
%     \item $\big(h, (j,\{\bl\})\big)$ for each $h$ in $\calHb$ satisfying $h + \tml \leq \tel$ and $j$ in $J$ such that  $h \leq \hb[j]$.
%     \item $\big(h, (j,\{\al\})\big)$ for each $h$ in $\calHb$  satisfying $h + \tml > \tel$ and $j$ in $J$ such that  $h \leq \hb[j]$.
%     \item $\big((j,\bl), (j',\bl)\big)$ for each $j$, $j'$ in $\Jbl$ such that $\te[j] \leq \tb[j']$.
%     \item $\big((j,\bl), (j',\al)\big)$ for each $j$ in $\Jbl$ and $j'$ in $\Jal$ such that $\te[j] + \tbr \leq \tb[j']$.
%     \item $\big((j,\al), (j',\al)\big)$ for each $j$, $j'$ in $\Jal$ such that $\te[j] \leq \tb[j']$.
%     \item $\big((j,\bl),h\big)$ for each $h \in \calHf$ such that $h < \tbl + \tml$ and $j$ in $J$ satisfying $\he[j] \leq h$.
%     \item $\big((j,\al),h\big)$ for each $h \in \calHf$ such that $h \geq \tbl + \tml$ and $j$ in $J$ satisfying $\he[j] \leq h$.
%     \item $\big(h,d\big)$ for each $h$ in $\calHf$.
% \end{itemize}
\begin{figure}[h]
    \centering
    \begin{tikzpicture}[roundnode/.style={circle, draw=green!60, fill=green!5, very thick, minimum size=7mm},
roundnodeB/.style={rectangle, draw=blue!60, fill=blue!5, very thick, minimum size=3mm},
squarenode/.style={rectangle, draw=red!60, fill=red!5, very thick, minimum size=5mm},
]
        %Nodes
        \node[squarenode] (initial) at (1,1.5,0) {o};
        \node (debut1) at (2,-1,0) {}; %{$h^{b_1}$};
        \node (debut2) at (2,3,0) {}; %{$h^{b_2}$};
% <<<<<<< HEAD
        % \node[fill = yellow] at (1,3,0) {$\hb[2]$};
        % \node[fill = yellow] at (1,-1,0) {$\hb[1]$};
% =======
        \node at (1.5,3,-0.5) {$\hb[2]$};
        \node at (1.5,-1,0) {$\hb[1]$};
% >>>>>>> d83dff07422ec45a83135cfe247c0d7ae6d215b7
        \node[roundnode] (n1) at (4,-1,-1) {$j_1$};
        \node[roundnode] (n21) at (4,-1,1) {$j_2$};
        \node[roundnode] (n22) at (4,3,1) {$j_2$};
        \node[roundnode] (n31) at (6,0,0) {$j_3$};
        \node[roundnode] (n32) at (6,4,-1) {$j_3$};
        \node[roundnode] (n4) at (8,4,-1) {$j_4$};
        \node[roundnodeB] (fin1) at (10,0.7,0) {$\he[1]$};
        \node[roundnodeB] (fin2) at (10,2.3,0) {$\he[2]$};
        \node[squarenode] (fin) at (11,1.5,0) {d};
         
        %Lines
        \draw[->] (initial.east) .. controls  (3,-1,-0.8)..(n1.west);
        \draw[->] (initial.east) .. controls  (3,-1,1.8)..(n21.west);
        \draw[->] (initial.east) .. controls (5,0,0).. (n31);
        \draw[->] (n1.east) -- (n31.south);
        \draw[->] (n1.east) .. controls (8,-1,0) .. (fin1.west);
        \draw[->] (n21.east) .. controls (5,-1,0.65) .. (n31.south);
        \draw[->] (n21.east) .. controls (8,-1,1.5) .. (fin1.west);
        \draw[->] (n31.east) -- (fin1.west);
        \draw[->] (n31.east) -- (fin2.west);
        \draw[->] (n1.east) .. controls (7,-1,-1.5).. (fin2.west); 
        \draw[->] (n21.east) .. controls (7,-1,1)..(fin2.west);
        \draw[->] (fin1.east) -- (fin.south);
        
        \draw[red,->] (initial.east) .. controls (3,3,0.7) .. (n22.west);
        \draw[->] (initial.east) .. controls (4,4,-1.2) .. (n32.west); 
        \draw[->] (initial.east).. controls (4,5.5,-1.2).. (n4.west);
        \draw[red,->] (n22.east) -- (n32.west);
        \draw[->] (n22.east) .. controls (6,3,1.2) .. (n4.west);
        \draw[->] (n22.east) .. controls (8,3,2) .. (fin2.west);
        \draw[red,->] (n32.east) -- (n4.west);
        \draw[->] (n32.east) .. controls (8.5,5,-1.4) .. (fin2.west);
        \draw[->] (n32.east) -- (fin1.west);
        \draw[->] (n22.east) .. controls (8,3,0).. (fin1.west);
        \draw[red,->] (n4.east) -- (fin2.west);
        \draw[red,->] (fin2.east)-- (fin.north);
        
         %rectangle
        \draw[opacity =0.2,fill = yellow] (1,-1,-1.5) -- (6,-1,-1.5)--(6,-1,1.5)--(1,-1,1.5)--(1,-1,-1.5);
        \draw[opacity =0.2,fill = orange] (5,0,-1.2) -- (10,0,-1.2)--(10,0,1.5)--(5,0,1.5)--(5,0,-1.2);
        \draw[opacity =0.2,fill = yellow] (1,3,-1.5) -- (6,3,-1.5)--(6,3,1.5)--(1,3,1.5)--(1,3,-1.5);
        \draw[opacity =0.2,fill = orange] (5,4,-1.5) -- (10,4,-1.5)--(10,4,1.5)--(5,4,1.5)--(5,4,-1.5);
    \end{tikzpicture}
    \caption{Example of modeling with four feasible time intervals}
    \label{fig:ModelingMIP}
\end{figure}
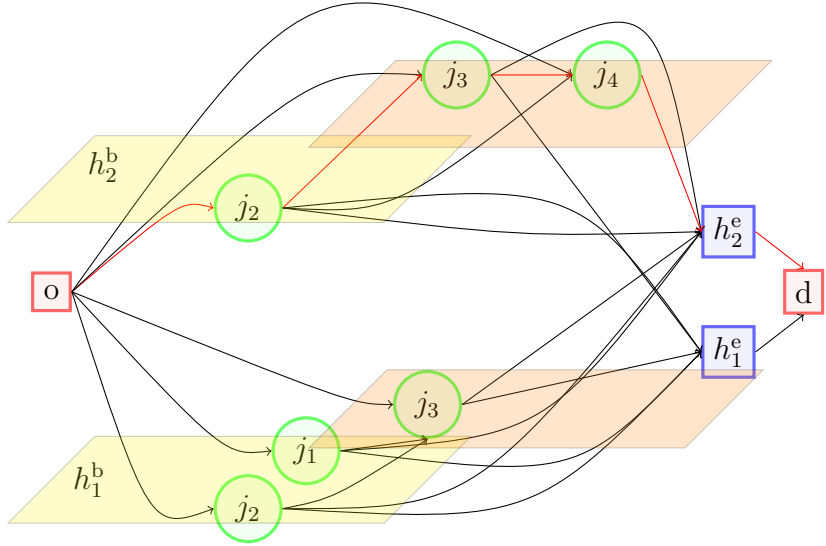
% \inlAP{Correct the strange beginning vertex and the notations on Figure~\ref{fig:ModelingMIP} }
The proof of Proposition~\ref{prop:bijectionDigraph} is given in appendix. We explain here the semantic of our digraph on the example illustrated on Figure \ref{fig:ModelingMIP}.
Consider the $o$-$d$ path $P$ in red on Figure~\ref{fig:ModelingMIP}.
According to Proposition~\ref{prop:bijectionDigraph}, there is a unique well-scheduled shift $\sh = \tb[\sh],\task^{1},\task^{2}, \ldots, \task^{k}, \te[\sh]$ that corresponds to $P$. 
We are now going to describe it.
For each beginning time $\hb$ in $\calHb$, let $V_{\hb} = \big(\Jbl[\hb] \times \{\hb \} \times \{\mathsf{bl}\}\big) \cup \big(\Jal[\hb] \times \{\hb\} \times \{\mathsf{al}\}\big)$. 
Remark that if $\hb[i] \neq \hb[i']$, there is no path from $V_{\hb[i]}$ to $V_{\hb[i']}$.
Hence, $P$ intersects a unique $V_{\hb}$. On Figure~\ref{fig:ModelingMIP}, we can see that it is $V_{\hb[2]}$.
This indicates that $\tb[\sh] = \hb[2]$.
The sequence of vertices $(j,\hb,\cdot)$ in $P$ indicates the successive jobs in $\sh$.
If $(j,\hb,\bl)$ is in $P$, it indicates that job $j$ is operated before the lunch break in $\sh$, and if $(j,\hb,\al)$ is in $P$, that $j$ is operated after the lunch break in $\sh$.
On Figure~\ref{fig:ModelingMIP}, the vertices in the yellow rectangles correspond to jobs operated before the lunch break, and the orange rectangles to those operated after lunch break.
On our red path $P$, three jobs are operated, $j_2$, $j_3$, and $j_4$, and a lunch break is taken between $j_2$ and $j_3$.
The penultimate vertex of an $o$-$d$ path $P$ is a vertex $\he \in \calHf$. It indicates that the ending time $\te[\sh]$ is equal to $\he $. 
For instance, on our red path, $\te[\sh] = \he[2]$. 
The proof of Proposition~\ref{prop:bijectionDigraph} shows that, given an $o$-$d$ path $P$, the shift $\sh$ constructed using the method described satisfies rules \ref{rule:Amplitude} and \ref{rule:LunchBreak}.

%, and a vertex $(j,\hb,\al)$ that $j$ is operated after the lunch break. 

% and its $\hb$ is the $\tb[\sh]$ of $P$.
% A vertex $(j,\hb,\bl)$ in $P$ indicates that job $j$ is operated before the lunch break in $\sh$, and a vertex $(j,\hb,\al)$ that $j$ is operated after the lunch break. 

% We distinctly see for each beginning time $\hb$, the corresponding subgraph and the edges between this subgraph and the ending times verifying \ref{rule:Amplitude}. In addition rule $\ref{rule:LunchBreak}$ is represented with the orange and yellow rectangles. The yellow rectangles represent the jobs which can be operated before the lunch break, i.e. the set $\Jbl$, and the orange rectangles those which belong to set $\Jal$ and can be operated after lunch break. Finally an o-d path is represented in red. The sequence of jobs, as well as the beginning and ending times of this path respect both rules \ref{rule:Amplitude} and \ref{rule:LunchBreak}.

% Let $P$ be an $o$-$d$ path.

% The order of the vertices in $P$ provides the orders of the jobs in $\sh$.
% Furthermore, there is at most one arc of the form $\big((j,\hb,\al),(j,\hb,\al)\big)$ in $P$, and this indicates that a lunch break is scheduled between $j$ and $j'$ in $\sh$.
% Finally, $P$ contains a unique vertex $\hb$ in $\calHf$, this $\hb$ is equal to $ \te[\sh]$.

% \inlAP{I think a proof in the text may help to understand. What do you think Julie? Should we put it in Appendix,}
% \inlJP{See below. Is is ok for you? and I don't know which environment to use}

%%%%%%%%%%%%%%%%%%%%%%%%%%%%%%%%%%%%%%%%%%%%%%%%%%%%%%%%%%%%%%%%%%%%%%%%%%%%%%%
\subsection{Mixed integer program}
\label{sub:mixed_integer_program}
%%%%%%%%%%%%%%%%%%%%%%%%%%%%%%%%%%%%%%%%%%%%%%%%%%%%%%%%%%%%%%%%%%%%%%%%%%%%%%%

Proposition~\ref{prop:bijectionDigraph} ensures that a solution of the ground staff shift planning problem is a collection of $o$-$d$ paths in $D$. 
For each arc $(u,v)$, let $M_{(u,v),\omega}$ be a binary equal to one if the job of $v$ cannot be operated by the same team as the job of $u$ under scenario $\omega$, that is, if
$u = (j,\hb,\el)$ and $v = (j',\hb,\el)$ for some $\hb \in \calHb$ and $\el \in \{\bl,\al\}$, and $\xe[j](\omega) > \xb[j'](\omega)$, 
or $u = (j,h,\bl)$ and $v = (j',h,\al)$ for some $h \in \calHb$, and $\xe[j](\omega) + \tbr > \xb[j'](\omega)$, and to zero otherwise. 
And let $M_{v,\omega}$ be a binary equal to one if $v \in V_j$ and $\vl[j]=1$, and to zero otherwise. 
% \inlAP{Change notation and update the second $M$. Remark that on an arc corresponding to a lunch break, case (i) cannot happen}
% \begin{alignat*}{2}
%  &u = (j,h,e) \text{ and } v = (j',h,e) \text{ for some }h \in \calHb \text{ and } e \in \{\bl,\al\}, &\quad&\text{and } \xe[j](\omega) > \xb[j'](\omega),\\
%  \text{or } & u = (j,h,\bl) \text{ and } v = (j',h,\al) \text{ for some }h \in \calHb &&\text{and } \xe[j](\omega) + \tbr > \xb[j'](\omega).
% \end{alignat*}{2}
Consider the following MILP.
\begin{subequations}\label{eq:PLNECompact}
    \begin{alignat}{2}
    \min_x \enskip&\sum_{a \in A} c_{a}x_{a} + \frac{\cbu}{|\Omega|} \sum_{v \in V}\sum_{\omega \in \Omega } y_{v}^{\oscenario} 
    &\quad &  \label{objectiv} \\
    \mathrm{s.t.}\enskip
    & \sum_{v \in V_j}\sum_{a \in \delta^{-}(v)}x_{a} = 1  &&  \forall  j \in J \label{cst:cover}\\
    & \sum_{a \in \delta^{-}(v)}x_{a} = \sum_{a \in \delta^{+}(v)}x_{a}  && \forall v \in V \backslash\{o,d\} \label{cst:flow}\\
    & y_{v,\omega} \geq M_{(u,v),\omega} x_{(u,v)} - y_{u,\omega} && \forall (u,v) \in A, \enskip \forall \omega \in \Omega \label{cst:delayArc}\\
    % & y_{v,\omega} \geq M_{j,\omega} && \forall j \in J, \forall v \in V_j \label{cst:delayVert} \\
    & x_{a} \in \{0,1\}  && \forall a \in A \label{cst:arcIntegrity} \\
    &  y_{v,\omega} \in \{0,1\}, \enskip y_{v,\omega}\geq M_{v,\omega}  &&   \forall v \in V, \enskip \forall \oscenario \in \Oscenario \label{cst:delayIntegrity} 
    \end{alignat}
\end{subequations}

The binary variable $x_a$ indicates if arc $a$ is in the solution. Constraint~\eqref{cst:flow} is a flow constraint, and together with the integrality of the $x_a$, it ensures that the solution of \eqref{eq:PLNECompact} is a collection of $o$-$d$ path in $D$. 
Constraint~\eqref{cst:cover} ensures that each job in $J$ is covered by a unique $o$-$d$ path in the solution.
The binary variable $y_{v,\omega}$ indicates if $v$ is on a path in the solution and the job of $v$ in the corresponding shift is operated by a back-up team under scenario $\omega$.
Indeed, Constraint~\eqref{cst:delayIntegrity} ensures that $y_{v,\omega} = 1$ if \ref{case:late} is satisfied, and Constraint~\eqref{cst:delayArc} is a big M constraint ensuring that $y_{v,\omega} = 1$ if~\ref{case:succ} or~\ref{case:break} are satisfied. 
The following Proposition is therefore an immediate corollary of Proposition~\ref{prop:bijectionDigraph}.

\begin{proposition}
MILP~\eqref{eq:PLNECompact} is equivalent to the ground staff shift planning problem.
\end{proposition}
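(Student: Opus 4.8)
### Proof plan

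The plan is to deduce this Proposition directly from Proposition~\ref{prop:bijectionDigraph}, which already establishes the bijection between well-scheduled shifts and $o$-$d$ paths in $D$, together with the fact (stated after Proposition~\ref{prop:bijectionDigraph}) that the shift associated with an $o$-$d$ path satisfies rules \ref{rule:Amplitude} and \ref{rule:LunchBreak}. By Proposition~\ref{prop:wellScheduled} we may restrict \eqref{pb} to well-scheduled shifts without loss, so it suffices to exhibit a cost-preserving correspondence between feasible solutions of~\eqref{eq:PLNECompact} and sets $\calS$ of well-scheduled shifts covering $J$.

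First I would argue the \emph{structural equivalence}. Given a feasible $x$, constraints \eqref{cst:flow} and \eqref{cst:arcIntegrity} force the support of $x$ to decompose into a family of $o$-$d$ paths in $D$ (the standard flow-decomposition argument for a $0/1$ flow on an acyclic digraph, where the total $o$-$d$ flow is determined by \eqref{cst:cover}); constraint \eqref{cst:cover} then says each $V_j$ is met by exactly one of these paths, i.e.\ each job $j\in J$ lies on exactly one of the associated shifts. Conversely, a set $\calS\subseteq\calSH$ covering $J$ yields, via Proposition~\ref{prop:bijectionDigraph}, a family of $o$-$d$ paths; setting $x_a$ to the number of paths through $a$ (which is $0$ or $1$ since two distinct well-scheduled shifts give arc-disjoint paths, as a repeated arc of the form $((j,\hb,\el),\he)$ or an internal arc would force the two shifts to share the job $j$ and hence coincide on a prefix/suffix — here one should check arc-disjointness carefully) satisfies \eqref{cst:flow}, \eqref{cst:arcIntegrity}, and \eqref{cst:cover}.

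Next I would handle the \emph{$y$ variables and the cost}. For a fixed path family, constraints \eqref{cst:delayIntegrity} and \eqref{cst:delayArc} together with the objective \eqref{objectiv} (which minimizes, hence pushes each $y_{v,\omega}$ down) force, at optimality for the $y$-block, $y_{v,\omega}=1$ exactly when $v$ is on a selected path and the job of $v$ is rescheduled under $\omega$: \eqref{cst:delayIntegrity} captures case \ref{case:late} via $M_{v,\omega}$, and the big-$M$ recursion \eqref{cst:delayArc} along the path captures cases \ref{case:succ} and \ref{case:break} — the subtraction of $y_{u,\omega}$ on the right-hand side precisely encodes the clause ``and not by a back-up agent'' in \ref{case:succ} and the analogous clause in \ref{case:break}, matching Hypothesis~\ref{hyp:propagation} that delay propagates only to the next job. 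Then $\sum_{v\in V} y_{v,\omega}$ counts the rescheduled jobs under $\omega$, so $\frac{\cbu}{|\Omega|}\sum_{v}\sum_{\omega} y_{v,\omega} = \sum_{\sh}\bbE(\cbu\,\nbu[\sh])$ when $\Omega$ is equipped with the uniform scenario distribution, and $\sum_a c_a x_a = \sum_{\sh} \cw(\he[\sh]-\hb[\sh])$ by the definition of $c_a$. Combining, the objective value equals $\sum_{\sh\in\calS} c_\sh$ as in \eqref{eq:shiftCost}, so optimal values and optimal solutions coincide.

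The main obstacle I anticipate is the bookkeeping around the $y$ variables: one must verify that the big-$M$ recursion \eqref{cst:delayArc} read along a path reproduces \emph{exactly} conditions \ref{case:succ}–\ref{case:break} (in particular that the ``$-y_{u,\omega}$'' term does not over- or under-count when two consecutive predecessors would each trigger rescheduling, and that the interaction with a lunch-break vertex $u$ — where $M_{(u,v),\omega}$ uses the $\tbr$-shifted inequality — is correctly modeled), and that a minimizing solution indeed drives $y$ to this intended value rather than something larger. Everything else is the routine flow-decomposition/arc-disjointness argument, which Proposition~\ref{prop:bijectionDigraph} largely absorbs. The write-up can therefore be short: cite Proposition~\ref{prop:bijectionDigraph} for the path structure, cite Proposition~\ref{prop:wellScheduled} for the reduction to $\calSH$, and spend the bulk of the argument on the $y$-to-$\nbu[\sh]$ identification.
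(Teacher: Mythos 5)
Your proposal is correct and follows essentially the same route as the paper, which states the result as an immediate corollary of Proposition~\ref{prop:bijectionDigraph} after an informal paragraph on the roles of constraints \eqref{cst:flow}, \eqref{cst:cover}, \eqref{cst:delayArc} and \eqref{cst:delayIntegrity}; if anything you are more careful, since you isolate exactly the two points (arc-disjointness of the paths in the flow decomposition, and the identification of the minimal feasible $y$ with the rescheduling indicators of \ref{case:succ}--\ref{case:late}) that the paper's ``immediate corollary'' silently uses. One caution for the full write-up: as the MILP is literally written, \eqref{cst:delayIntegrity} forces $y_{v,\omega}=1$ for \emph{every} copy $v\in V_j$ of a very-late job, on or off the selected paths, so $\sum_{v}y_{v,\omega}$ overcounts the rescheduled jobs by a constant independent of $x$, and two shifts ending at the same time $\he$ would have to share the single arc $(\he,d)$; these are artifacts of the formulation (shared with the paper's own justification) rather than flaws in your argument, and the correspondence between optimal solutions survives them.
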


%%%%%%%%%%%%%%%%%%%%%%%%%%%%%%%%%%%%%%%%%%%%%%%%%%%%%%%%%%%%%%%%%%%%%%%%%%%%%%%
\subsection{Numerical results}
\label{sub:numerical_results}
%%%%%%%%%%%%%%%%%%%%%%%%%%%%%%%%%%%%%%%%%%%%%%%%%%%%%%%%%%%%%%%%%%%%%%%%%%%%%%%

% \inlAP{TO BE ADDED.}

\subsubsection{Experimental settings.}
\label{subsubsec:ExperimentalSettings}
 % \paragraph{}
 % \inlJP{toSetting be updated, for the moment corresponds to regis}
All the experiments have been performed on a Linux computer with 4 cores at 3.8GHz, and 8.1 Gb of RAM. The algorithms are not parallelized. \texttt{GUROBI 7.0.2} is used to solve all linear and integer programs. 
% \inlJP{Any interest in noting the number of CPU even if your algorithm isn't parallelized ? as results are for the MIP only}
 
 \subsubsection{Instances description.}
 \label{subsubsec:AFInstances}
 % \paragraph{}
The instances used are real instances from runway jobs operated by Air France staff. 
% For test purposes, these instances have sometimes been truncated or restricted to one shift. 
Table \ref{tab:AFInstances} describes eight of these instances, whose identifying number is provided in the first column.
The first column gives instances name, and the second column the number of jobs in the instance.
The third column gives the number of intervals $[\tb,\te]$ with $\tb \in \calHb$, $\te \in \calHf$, $\tb < \te$, and $\te - \tb < \tm$. 
For instances 1 to 4, demand-level shift planning has been done as a preprocessing. All the shifts $\sh$ are on a fixed time interval $[\tb[\sh],\te[\sh]]$ of six hours, and do not contain lunch breaks. 
% On these instances, demand-level have been done as a preprocessing. 
We introduce them for testing purpose: the only difficulty in these instances comes from delay costs -- without these stochastic costs, they can be solved in polynomial time using a flow approach.
 % (demand level shift planning has been done). 
% For testing purposes, we have introduced four instanced where demand-level shift planning has already been done: starting and ending time of the shifts is fixed and identical for all shifts.  
% And four instances where starting and ending time has not been chosen.
 On the contrary, the last four instances deal with jobs distributed over the whole day, from 00:29 am to 11h59 pm. 
For these instances, the starting time $\hb$, the ending time $\he$, and the breaks of the shifts must be chosen.
 Finally, the last column gives the maximal number of jobs in a well-scheduled shift.
 \begin {table}[h]
\begin{center}
\begin{tabular}{|c|c|c|c|} 
\hline
Instance & Number of jobs & Number of feasible & Maximal number of \\ 
& &  time intervals & jobs in a shift \\
\hline
1 & 57 & 1 & 4\\
2 & 166 & 1 & 5\\
3 & 232 & 1 & 6\\
4 & 300 & 1 & 7\\
5 & 49 & 251 & 7 \\
6 & 111 & 251 & 7\\
7 & 210 & 251 & 7\\
8 & 256 & 251 & 7\\
\hline

\end {tabular}
\caption {Air France instances \label{tab:AFInstances}}
\end{center}
\end {table}

\subsubsection{Compact MILP approach results.}
\label{subsubsec:CompactMIPresults}
% \paragraph{}
Table \ref{tab:AFResultsTime} provides the computing time required to solve MILP~\eqref{eq:PLNECompact} associated to each instance with respectively 100 scenarios and 200 scenarios.
Due to a time limit of 3600s, the optimality gap reached in the time allocated is also presented for each problem.
In columns ``Optimality gap'', opt means that the instance has been solved to optimality, and symbol $"-"$ means that problem is not solvable due to memory issues, or that no solution has been found in 3600s.
The second and third column provide results for the deterministic version of the problem.
By deterministic we refer to the classical jobs assignment problem, without taking into account any delays nor back-up agents, and use MILP~\eqref{eq:PLNECompact} without variables $y_{v,\oscenario}$ and constraint \eqref{cst:delayArc}.
The fourth and fifth columns, and sixth and seventh columns provide results for the stochastic ground staff shift planning problem as presented in \eqref{pb}, with respectively 100 scenarios and 200 scenarios.\\
Deterministic instances are solved to optimality in a few seconds.
Even for instances with a small number of jobs, MILP \eqref{eq:PLNECompact} is hardly tractable by Gurobi, and the difficulty greatly increases with both the number of jobs and the number of scenarios. In comparison, the deterministic problem is easily solvable by a compact MILP approach. These results confirm that taking into account delays, as stated in \eqref{pb}, makes the problem very hard to solve and legitimate the column generation approach.

\begin{table}[h]
\begin{center}
\begin{tabular}{c|c|c|c|c|c|c|}
\cline{2-7} & \multicolumn{2}{c|}{Deterministic problem} & \multicolumn{2}{c|}{Stochastic problem} &  \multicolumn{2}{c|}{Stochastic problem} \\ 
 & \multicolumn{2}{c|}{} & \multicolumn{2}{c|}{100 scenarios} &  \multicolumn{2}{c|}{200 scenarios} \\ \hline
\multicolumn{1}{|c|}{Instance} & Running time  & Optimality & Running time & Optimality & Running time & Optimality \\ 
\multicolumn{1}{|c|}{} & (hh:mm:ss)  & gap (\%) & (hh:mm:ss)  &  gap (\%) & (hh:mm:ss)  &  gap (\%) \\ \hline
\multicolumn{1}{|c|}{1} & 00:00:01 & opt & 01:00:00 & 7.1 & 01:00:00 & 11.4 \\
\multicolumn{1}{|c|}{2} & 00:00:01 & opt & 01:00:00 & -   & 01:00:00 & - \\ 
\multicolumn{1}{|c|}{3} & 00:00:01 & opt & 01:00:00 & -   & 01:00:00 & - \\
\multicolumn{1}{|c|}{4} & 00:00:02 & opt & 01:00:00 & -   & 01:00:00 & - \\
\multicolumn{1}{|c|}{5} & 00:00:01 & opt & 01:00:00 & 6.7 & 01:00:00 & 8.2  \\
\multicolumn{1}{|c|}{6} & 00:00:01 & opt & 01:00:00 & 6.9 & 01:00:00 & 23.1 \\
\multicolumn{1}{|c|}{7} & 00:00:06 & opt & 01:00:00 & -   & 01:00:00 & -  \\
\multicolumn{1}{|c|}{8} & 00:00:14 & opt & 01:00:00& -    & 01:00:00 & -  \\
\hline
\end{tabular}
\caption {Running time needed to solve the different integer programs and optimality gap reached\label{tab:AFResultsTime}}
\end{center}
\end{table}

\section{Column generation approach \label{sec:CGApproach}}
\paragraph{}

MILP~\eqref{eq:PLNECompact} has a poor relaxation that impedes its resolution.
In order to deal with larger instances, we now introduce a set-partitioning formulation of the stochastic ground staff shift planning problem.
Consider the following \emph{master problem}.
\begin{subequations}\label{eq:MasterProblem}
	\begin{alignat}{2}
	\min_y \enskip&\sum_{\shift \in \calSH}\costshift y_{\shift}
	&\quad &
	\\
	\mathrm{s.t.}\enskip
	& \sum_{\shift \ni \task}y_{\shift} = 1 , && \forall \task \in \setTasks \label{cst:CovertnessCst}\\
	& y_{\shift} \in \{0,1\}, && \forall \shift \in \calSH \label{eq:masterIntegrity}
	\end{alignat}
\end{subequations}
Binary variable $y_{\shift}$ indicates if a shift $\shift \in \calSH$ belongs to the solution. 
The notation $\shift \ni \task$ means that the job $\task$ is realized during the sequence of $\shift$, thus the constraint \eqref{cst:CovertnessCst} ensures that all jobs in $\setTasks$ are covered. 
Master problem~\eqref{eq:MasterProblem} is therefore immediately equivalent to our problem~\eqref{pb}. Remark that we can replace \eqref{eq:masterIntegrity} by $y_{\sh} \in \bbZ_+$ as for each shift $\sh$, $c_{\sh} \geq 0$, and we can replace $=$ by $\geq$ in \eqref{cst:CovertnessCst} because removing a job $j$ from a well-scheduled shift $\sh$ gives a well-scheduled shift $\sh'$ such that $c_{\sh'} \leq c_{\sh}$, 

% The structure of shifts does not appear in the constraints of \eqref{eq:MasterProblem}, but is hidden in the set~$\calSH$. 
% Furthermore, the number of shifts $\calSH$ is exponentially large. Hence \eqref{eq:MasterProblem} cannot be given directly to a solver and must be solved using a column generation scheme, which we detail in the remaining of the section.
The advantage of \eqref{eq:MasterProblem} over the compact formulation \eqref{eq:PLNECompact} comes from its stronger relaxation. Indeed consider the Dantzig-Wolfe reformulation of \eqref{eq:PLNECompact} obtained by replacing the polyhedron defined by \eqref{cst:flow} and \eqref{cst:delayArc} by the convex hull of its integer points.
As the vertices of the polyhedron defined by \eqref{cst:flow} correspond to the $o$-$d$ paths in $D$, Proposition~\ref{prop:bijectionDigraph} ensures that this Dantzig-Wolfe reformulation is our master problem~\eqref{eq:MasterProblem}. 
The number of shifts $\calSH$ being exponentially large, \eqref{eq:MasterProblem} cannot be given directly to a solver and must be solved by column generation.

% scheme, which we detail in the remaining of the section.

% \begin{remark}
% % Master Problem~\ref{subsec:MasterProblem} can be slightly simplified.
% As the cost $c_{\sh}$ each shift $\sh$ is positive, we can replace \eqref{eq:masterIntegrity} by $y_{\sh} \in \bbZ_+$.
% And as removing a job $j$ from a well-scheduled shift $\sh$ gives a well-scheduled shift $\sh'$ such that $c_{\sh'} \leq c_{\sh}$, we can replace $=$ by $\geq$ in \eqref{cst:CovertnessCst}.
% \end{remark}

%%%%%%%%%%%%%%%%%%%%%%%%%%%%%%%%%%%%%%%%%%%%%%%%%%%%%%%%%%%%%%%%%%%%%%%%%%%%%%%
\subsection{Column generation algorithm for the linear relaxation}
\label{sub:column_generation_algorithm_for_the_linear_relaxation}
%%%%%%%%%%%%%%%%%%%%%%%%%%%%%%%%%%%%%%%%%%%%%%%%%%%%%%%%%%%%%%%%%%%%%%%%%%%%%%%

Our algorithm to solve~\eqref{eq:MasterProblem} requires a subroutine to solve the linear relaxation of~\eqref{eq:MasterProblem}. %restricted to a set of jobs $\sfT$ in $J$.
We solve this linear relaxation using Algorithm~\ref{alg:CGalgorithm}, which is a standard column generation algorithm.
In Algorithm~\ref{alg:CGalgorithm}, we denote by $\lambda_j$ the dual variable associated to constraint~\ref{cst:CovertnessCst} for job $J$.
% Algorithm~\ref{alg:CGalgorithm} is the standard column generation algorithm to solve the linear relaxation of~\eqref{subsec:MasterProblem} restricted to the set of jobs $\sfT$ in $J$.
A set of feasible shifts $\calSH' \subset \calSH$ covering all the jobs in $J$ is easily obtained by taking a one job shift for each job in $J$.
Section~\ref{sec:PrincingSubProblem} provides our algorithm to solve the pricing subproblem of Step~\ref{alg:SolvePricingSubProblem}.
\begin{algorithm}
\begin{algorithmic}[1]
\STATE \textbf{input:} a set of jobs $J$, a set of feasible shifts $\calSH' \subset \calSH$ covering all the jobs in $J$, a negative real number $\Delta$;
% \STATE \textbf{initialization:} find a set of feasible shifts $\calS' \subset \calSH$ covering all the jobs;
% \STATE $\tilde{c} \leftarrow -1$;
\REPEAT %{$\tilde{c} <0 $} \label{alg:WhileCoutReduit}
\STATE solve \eqref{eq:MasterProblem} restricted to %job set $T$ and 
shifts in $\calSH'$ with any standard LP solver; \label{alg:SolveRestrictedDualPb}
\STATE denote $\clow$ its optimal value, and $\lambda_j$ the dual value associated to \eqref{cst:CovertnessCst};% for each $j$ in $\sfT$;
\STATE find a shift $\shift \in \calSH$ whose reduced cost $\tilde{c}_\shift = \costshift - \sum_{\task \in \shift} \lambda_{\task}$ is either less than M or minimal\label{alg:SolvePricingSubProblem}; \\ \hfill (\emph{pricing subproblem})
\IF{$\tilde{c}_\shift > \Delta$} \label{step:updateM}
\STATE $ \Delta \leftarrow \Delta/2$;
\ENDIF
% \STATE $\tilde{c} \leftarrow \tilde{c}_\shift$;
\STATE $\calS' \leftarrow \calS' \cup \{ \shift \}$ \label{alg:addShift}; 
\UNTIL $\tilde{c}_\sh \geq 0 $ \label{alg:endWhileCoutReduit}
\STATE \textbf{return} $\clow$, $\lambda_j$, and $\calS'$;
\end{algorithmic}
\caption{Column Generation algorithm}
\label{alg:CGalgorithm}
\end{algorithm}
% \inlAP{Enlever un des deux algorithmes}
% \begin{algorithm}
% \begin{algorithmic}[1]
% \STATE \textbf{input:} a set of jobs $\sfT$, a set of feasible shifts $\calSH' \subset \calSH$ covering all the jobs in $\sfT$;
% % \STATE \textbf{initialization:} find a set of feasible shifts $\calS' \subset \calSH$ covering all the jobs;
% % \STATE $\tilde{c} \leftarrow -1$;
% \REPEAT %{$\tilde{c} <0 $} \label{alg:WhileCoutReduit}
% \STATE solve \eqref{eq:MasterProblem} restricted to job set $T$ and shifts in $\calSH'$ with any standard LP solver; \label{alg:UsualSolveRestrictedDualPb}
% \STATE denote $\clow$ its optimal value, and $\lambda_j$ the value dual associated to \eqref{cst:CovertnessCst} for each $j$ in $\sfT$;
% \STATE find a shift $\shift \in \calSH$ of minimum reduced cost $\tilde{c}_\shift = \costshift - \sum_{\task \in \shift} \lambda_{\task}$  \label{alg:SolveUsualPricingSubProblem}; \hfill (\emph{usual pricing subproblem})
% % \STATE $\tilde{c} \leftarrow \tilde{c}_\shift$;
% \STATE $\calS' \leftarrow \calS' \cup \{ \shift \}$ \label{alg:UsualaddShift}; 
% \UNTIL $\tilde{c}_\sh \geq 0 $ \label{alg:UsualendWhileCoutReduit}
% \STATE \textbf{return} $\clow$, $\lambda_j$, and $\calS'$;
% \end{algorithmic}
% \caption{Usual Column Generation algorithm}
% \label{alg:UsualCGalgorithm}
% \end{algorithm}

Column generation theory ensures that Algorithm~\ref{alg:CGalgorithm} converges after a finite number of iterations, the value of $\clow$ returned by the algorithm is the value of an optimal solution of the linear relaxation of~\eqref{eq:MasterProblem}, % restricted to $\sfT$, 
and $\lambda_j$ is the dual value associated to $y_j$ in a basic optimal solution of the relaxation. 
Indeed, simplex algorithm theory ensures that an optimality criterion of a solution of the linear relaxation of~\eqref{eq:MasterProblem} is the non-negativity of the corresponding \emph{reduced costs}
\begin{equation}\label{eq:optimalitySimplex} 	
\tilde{c}_\shift = \costshift - \sum_{\task \in \shift} \lambda_{\task} \geq 0 \text{ for each $\sh$ in $\calSH$.}
 \end{equation} 
Algorithm~\ref{alg:CGalgorithm} considers a master problem \eqref{eq:MasterProblem} restricted to a set of shifts $\calSH'$ of tractable size, and dynamically adds shifts to $\calSH'$ until the optimality criterion \eqref{eq:optimalitySimplex} is matched. 
% We now briefly sketch the rational behind this algorithm. 
Additional details on column generation can be found in \citep{lubbecke2010column}. 

\paragraph{}
The only originality of Algorithm~\ref{alg:CGalgorithm} lies in the use of the constant $\Delta$.
In the usual column generation, the pricing subproblem seeks a column $\sh$ of minimum reduced cost $\tilde{c}_\shift = \costshift - \sum_{\task \in \shift} \lambda_{\task}$.
Solving the pricing subproblem is generally the time consuming part of a column generation.
It is well known that adding an arbitrary shift of negative reduced cost instead of a shift of minimal reduced cost is sufficient to ensure convergence to an optimal solution, and leads to an easier pricing subproblem.
However, adding such an arbitrary shift instead of the minimal one strongly increases the number of iterations on our problem.
The use of the constant $\Delta$ enables to turn the pricing subproblem into ``find a column of nearly minimal reduced cost'', and have the advantages of both previous options: it leads to an easier pricing subproblem without increasing to much the number of iterations of the column generation.

% To speed us the process, we propose a slightly different pricing subproblem: given a negative real M, find a shift $\shift \in \calSH$ whose reduced cost $\tilde{c}_\shift = \costshift - \sum_{\task \in \shift} \lambda_{\task}$ is either less than M or minimal. 
% Algorithm~\ref{alg:CGalgorithm} presents the column generation with that new pricing subproblem.

% The rationale behind this algorithm is that finding a shift with a negative enough reduced cost is faster than always finding the one with a minimal reduced cost, and still decreases the cost in the simplex algorithm. The update of M realized at step 
% \ref{step:updateM} intends to keep M small enough so that chances that a variable with a reduced cost smaller than M exists are high.
% \inlJP{Citer je ne sais qui??}

%%%%%%%%%%%%%%%%%%%%%%%%%%%%%%%%%%%%%%%%%%%%%%%%%%%%%%%%%%%%%%%%%%%%%%%%%%%%%%%
\subsection{Exact column generation scheme}
\label{sub:exact_column_generation_scheme}
%%%%%%%%%%%%%%%%%%%%%%%%%%%%%%%%%%%%%%%%%%%%%%%%%%%%%%%%%%%%%%%%%%%%%%%%%%%%%%%
 
We can now state Algorithm~\ref{alg:RetrieveIntegerSolution}, our exact solution algorithm for problem~\eqref{eq:MasterProblem}.
\begin{algorithm}
\begin{algorithmic}[1]
\STATE \textbf{Input: } a set of jobs $J$, a set of feasible shifts $\calSH'' \subset \calSH$ covering all the jobs in $J$;
\STATE solve the linear relaxation of~\eqref{eq:MasterProblem} using Algorithm~\ref{alg:CGalgorithm} with $\calSH' = \calSH''$. Denote $\clow$, $\lambda_j$, and $\calSH'$ the values returned. \label{step:relax} %\inlJP{$\calSH''$ has never been defined}
\STATE solve \eqref{eq:PLNECompact} restricted to $\calSH'$ using any standard MILP solver \label{enum:UpperBound}; denote $\cupp$ its optimal solution;
\IF {$\cupp = \clow$}
\STATE \textbf{stop}; \textbf{return} $\cupp$; \label{step:noGap}
\ENDIF 
\STATE add to $\calS'$ all the shifts $\shift$ in $\calSH$ satisfying $\tilde{c}_\shift = \costshift - \sum_{\task \in \shift} \lambda_{\task} \leq \cupp - \clow$;\label{step:addAllSmallerGap}
\STATE solve \eqref{eq:MasterProblem} restricted to $\calS'$ \label{enum:IntegerSolution} with any MILP standard solver; 
\STATE denote $\cfinal$ its optimal solution; \label{step:cfinal}
\STATE \textbf{return} $\cfinal$;
\end{algorithmic}
\caption{Exact solution algorithm for~\eqref{eq:MasterProblem}}
\label{alg:RetrieveIntegerSolution}
\end{algorithm}
Lemma~\ref{lem:reducedCostGap} below ensures that the value returned is the optimal value of \eqref{eq:MasterProblem}.
Indeed, at Step~\ref{step:addAllSmallerGap}, $\cupp$ is the value of a solution of~\eqref{eq:MasterProblem} and hence an upper bound on its optimal solution, and the previous section ensures that $\clow$ and $(\tilde{c}_\sh)_{\sh \in \calSH}$ are respectively the value and reduced costs of an optimal solution of the linear relaxation of \eqref{eq:MasterProblem}.

\begin{lemma}\label{lem:reducedCostGap}\citep[Proposition 2.1, p. 389]{nemhauser1988integer}
Consider an integer program in standard form with variables $(x_i)$ for which the linear relaxation admits a finite optimal value $\bar{v}$. Suppose given an upper bound \emph{UB} on the optimal value of the integer program. Then for every i such that $\tilde{c}_{i}> UB - \bar{v}$, the variable $x_{i}$ is equal to 0 in all optimal solutions of the integer program where $\tilde{c}_{i}$ denotes the reduced cost of the variable $z_{i}$ when the linear relaxation has been solved to optimality.
\end{lemma}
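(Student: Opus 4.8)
The plan is to combine linear-programming duality with the nonnegativity of the reduced costs at an optimal basis. Write the integer program in standard form as $\min\{c^\top x : Ax = b,\ x \ge 0,\ x \in \mathbb{Z}^n\}$; since its linear relaxation has finite value $\bar v$ and the integer program admits the finite upper bound $UB$, an optimal integer solution $x^\star$ exists. Let $y$ be an optimal dual solution of the linear relaxation, so the reduced cost vector is $\tilde c = c - A^\top y$. First I would invoke two consequences of LP optimality: the simplex optimality criterion gives $\tilde c \ge 0$, and strong duality gives $b^\top y = \bar v$.

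Next I would use the identity $c^\top x = b^\top y + \tilde c^\top x = \bar v + \tilde c^\top x$, valid for every $x$ with $Ax = b$ (substitute $Ax = b$ into $\tilde c^\top x = c^\top x - y^\top A x$). Applying it to $x^\star$ and using $c^\top x^\star \le UB$ yields $\tilde c^\top x^\star \le UB - \bar v$. Since $\tilde c \ge 0$ and $x^\star \ge 0$, every summand of $\tilde c^\top x^\star = \sum_i \tilde c_i x_i^\star$ is nonnegative, hence each one satisfies $\tilde c_i x_i^\star \le UB - \bar v$.

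Finally, fix an index $i$ with $\tilde c_i > UB - \bar v$ (note $UB - \bar v \ge 0$ because $\bar v$ lower-bounds the integer optimum). If $x_i^\star \ge 1$ then $\tilde c_i x_i^\star \ge \tilde c_i > UB - \bar v$, a contradiction; so $0 \le x_i^\star < 1$, and integrality of $x_i^\star$ forces $x_i^\star = 0$. As $x^\star$ was an arbitrary optimal integer solution, the variable is $0$ in all of them. I do not anticipate a real obstacle; the only points demanding care are the sign conventions (that the reduced cost of a column is $c_j - c_B B^{-1} A_j \ge 0$ at an optimal basis) and the appeal to strong LP duality to identify $b^\top y$ with $\bar v$, while integrality of the variables is used only in the last step $x_i^\star < 1 \Rightarrow x_i^\star = 0$.
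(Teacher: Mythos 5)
Your proof is correct. The paper does not prove this lemma itself---it simply cites it to Nemhauser and Wolsey---and your argument (decompose $c^\top x = \bar v + \tilde c^\top x$ via the optimal dual, use $\tilde c \ge 0$ and $x^\star \ge 0$ to bound each term $\tilde c_i x_i^\star$ by $UB - \bar v$, then invoke integrality to force $x_i^\star = 0$) is exactly the standard argument from that reference, with the sign conventions and the duality step handled correctly.
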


Again, a set of feasible shifts $\calSH'' \subset \calSH$ covering all the jobs in $J$ is easily obtained by taking a one job shift for each job in $J$.
A potential limit of Algorithm~\ref{alg:RetrieveIntegerSolution} is that the number of pairing generated at Step~\ref{step:addAllSmallerGap} might be intractably large.
In that case, Algorithm~\ref{alg:RetrieveIntegerSolution} should be replaced by a branch and price.
However, in all our numerical experiments, $\cupp - \clow$ happens to be very small, the number of pairing generated at Step~\ref{step:addAllSmallerGap} remains tractable, and Algorithm~\ref{alg:RetrieveIntegerSolution} enables to solve the problem to optimality.

\subsection{Numerical results}

\paragraph{}
We run the experiments under the settings of Section~\ref{subsubsec:ExperimentalSettings} and consider the instances introduced in Section~\ref{subsubsec:AFInstances} with the same scenarios.
 % are again considered, as well as the same scenarios. The following results are obtained with the same experimental settings presented in Section~\ref{subsubsec:ExperimentalSettings} 

\paragraph{}
Table \ref{tab:AFTimeCG100Scnearios} and Table~\ref{tab:AFTimeCG200Scnearios} present the performances of Algorithm~\ref{alg:RetrieveIntegerSolution} on our instances of the stochastic ground staff shift planning problem~\eqref{pb}, with respectively 100 scenarios and 200 scenarios.
In both tables, the first column gives the name of the instance solved by Algorithm~\ref{alg:RetrieveIntegerSolution}.
\emph{All instances are solved to optimality}.
The second column gives the number of steps in the column generation Algorithm~\ref{alg:CGalgorithm}.
The third, fourth and fifth columns provide respectively the percentage of time spent in the pricing subproblem, in the column generation, and in retrieving the integer solution, i.e., in the steps \ref{enum:UpperBound} to \ref{step:cfinal} of Algorithm~\ref{alg:RetrieveIntegerSolution}. 
The sixth column indicates the total time needed to solve problem~\eqref{eq:MasterProblem} using Algorithm~\ref{alg:RetrieveIntegerSolution}.
Finally, the last column presents the percentage by which the cost of the solution returned by Algorithm~\ref{alg:RetrieveIntegerSolution} is smaller than the cost of the solution obtained using the deterministic approach that does not take into account delay -- the optimal solution of the deterministic problem is obtained using the MIP of Section~\ref{subsubsec:CompactMIPresults}.\\

\begin {table}[h]
\begin{center}
\begin{tabular}{|c|c|c|c|c|c|c|}
\hline
Instance&Number&Pricing &Col. Gen.&Integer solution&Total time&Improve.~vs\\
 &of steps& time (\%) & time (\%) & time (\%) &(hh:mm:ss)& det.~prob. (\%)\\
\hline
% 1  &  100 & 00:00:01  & 00:00:01 & 00:00:01 & 00:00:02 & 0.0 \\
% 2  &  100 & 00:02:09  & 00:02:09 & 00:00:07 & 00:02:16 & 0.0 \\
% 3  &  100 & 00:12:06  & 00:12:06 & 00:00:43 & 00:12:50 & 0.0 \\
% 4  &  100 & 03:21:58  & 03:21:59 & 00:13:18 & 03:35:23 & 0.0 \\
% 5  &  100 & 00:00:24  & 00:00:25 & 00:00:09 & 00:00:36 & 0.0 \\
% 6  &  100 & 00:02:11  & 00:02:11 & 00:00:04 & 00:02:16 & 0.0 \\
% 7  &  100 & 00:25:00  & 00:25:01 & 00:00:30 & 00:25:31 & 0.0 \\
% 8  &  100 & 01:31:45  & 01:31:50 & 00:04:07 & 01:35:59 & 0.0 \\

1  & 7 & 99.2  & 99.3 & 0.3 & 00:00:02 & 3.12\\
2  & 24 & 94.8  & 94.8 & 5.2 & 00:01:41 & 4.34 \\
3  & 34 & 99.2  & 99.2 & 0.5 & 00:12:34 & 4.95 \\
4  & 56 & 93.8  & 93.9 & 6.1 & 03:35:23 & 4.83 \\
5  & 13 & 90.1  & 94.1 & 2.1 & 00:00:11 & 3.12 \\
6  & 14 & 96.3  & 96.9 & 3.1 & 00:02:01 & 4.24\\
7  & 22 & 99.4  & 99.5 & 0.2 & 00:23:20 & 5.60 \\
8  & 36 & 95.6  & 95.7 & 4.3 & 01:35:59 & 5.36\\
\hline
\end {tabular}
\caption {Performances of Algorithm~\ref{alg:RetrieveIntegerSolution} to solve master problem \eqref{eq:MasterProblem} with 100 scenarios to optimality
\label{tab:AFTimeCG100Scnearios}}
\end{center}
\end {table}

\begin {table}[h]
\begin{center}
\begin{tabular}{|c|c|c|c|c|c|c|}
\hline
Instance&Number&Pricing &Col. Gen.&Integer solution&Total time&Improve.~vs\\
 &of steps& time (\%) & time (\%) & time (\%) &(hh:mm:ss)& det.~prob. (\%)\\
\hline

1  & 5 & 98.6 & 99.1 & 0.09 & 00:00:02 & 3.00 \\
2  & 21 & 98.5 & 98.9 & 0.7 & 00:02:20 & 4.02 \\
3  & 47 & 93.1 & 93.2 & 6.8 & 00:19:59 & 4.83 \\
4  & 115 & 89.1 & 89.1 & 10.1 & 05:44:41 & 4.62 \\
5  & 8 & 85.1 & 87.2 & 11.8 & 00:00:22 & 3.31  \\
6  & 16 & 98.4 & 98.5 & 0.1 & 00:02:10 & 4.32  \\
7  & 25 & 99.4 & 99.5 & 0.2 & 00:27:22 & 5.49  \\
8  & 38 & 99.7 & 99.8 & 0.1 & 03:43:09 & 5.17  \\
\hline
\end {tabular}
\caption {Performances of Algorithm~\ref{alg:RetrieveIntegerSolution} to solve master problem \eqref{eq:MasterProblem} with 200 scenarios to optimality
\label{tab:AFTimeCG200Scnearios}}
\end{center}
\end {table}

We underline that these results show the interest of using a stochastic ground staff shift planing approach that includes the cost of delay instead of a deterministic one which does not take into account delay. 
Indeed, this enables to reduce the total operating cost by 3\% to 5\%.
This improvement seems to increase with the number of jobs, probably due to additional flexibility in the design job sequences.

The column generation approach enables to solve to optimality instances with up to 250 scenarios in a few minutes, and instances with up to 300 scenarios in a few hours.
The number of scenarios does not impact too much the computing time.
The critical element in the performance is the pricing subproblem: most of the computing time is spent solving instances of the pricing subproblem.
And the factor that limits the size of the instance we can solve is the pricing subproblem algorithm -- Algorithm~\ref{alg:enumeration} introduced in the next section.
Indeed, on larger instances, the list $\sfL$ of candidate paths used by Algorithm~\ref{alg:enumeration} becomes too large and saturates the memory of our computer.

% We note an improvement in the objective function comparing to the deterministic problem where delay isn't taken account in the optimization. Furthermore this improvement increases with the number of tasks, which corresponds to an increasing number of shifts possibilities.

% The column generation approach is globally efficient with a higher number of scenario. In addition, almost all the time is spent on solving the pricing subproblem.

% \begin{remark}
% If at step \ref{enum:UpperBound} of Algorithm~\ref{alg:RetrieveIntegerSolution} we found an integer solution whose value is equal to the relaxed value of the master problem, we directly stop.
% \end{remark}

% \inlJP{mettre deux mots sur le fait que bloque par la memoire?}

Finally, Tables \ref{tab:AFResultsComparaison100Scenarios} and \ref{tab:AFResultsComparaison200Scenarios} provide results explaining why we can use Algorithm~\ref{alg:RetrieveIntegerSolution} instead of a branch-and-price,
% Tables \ref{tab:AFResultsComparaison100Scenarios} and \ref{tab:AFResultsComparaison200Scenarios} provide results 
for instances with respectively 100 and 200 scenarios.
In both tables, the first column indicates the instance solved.
The column ``final integrality gap'' provides the gap $(c_r - \clow)/ \clow$ between the optimal integer value $c_r$ of \eqref{eq:MasterProblem} and the value $\clow$ of its linear relaxation computed at Step~\ref{step:relax} of Algorithm~\ref{alg:RetrieveIntegerSolution}.
The column ``intermediate integrality gap'' provides the gap $(\cupp - \clow)/ \clow$ between the linear relaxation $\clow$ of \eqref{eq:MasterProblem} and the value $\cupp$ of the integer solution computed at Step~\ref{enum:IntegerSolution}  of Algorithm~\ref{alg:RetrieveIntegerSolution}.
 % via Algorithm~\ref{alg:CGalgorithm} and the optimal integer solution returned, and the gap between the relaxed value and the integer value obtained with the variables generated through the column generation, which corresponds to $(\cupp - \clow)/ \clow$ in Algorithm~\ref{alg:RetrieveIntegerSolution}.
The fourth column gives the number of variables satisfying the condition of step~\ref{step:addAllSmallerGap} of Algorithm~\ref{alg:RetrieveIntegerSolution}, thus including both the ones generated through the column generation in Algorithm~\ref{alg:CGalgorithm} and the ones generated through step \ref{step:addAllSmallerGap} in Algorithm~\ref{alg:RetrieveIntegerSolution}.
We underline that the integrality gap at the end of the column generation, $(\cupp - \clow)/ \clow$, is very small.
This explains the practical efficiency of Algorithm~\ref{alg:RetrieveIntegerSolution}.
Indeed, for 10 instances out of 16, there is no integrality gap, and Algorithm~\ref{alg:RetrieveIntegerSolution} stops at Step~\ref{step:noGap}.
On the other instances, the integrality gap is non-greater than $0.04\%$ and at most 18345 columns are generated at Step~\ref{step:addAllSmallerGap}. Furthermore, in these cases, the optimal integer solution is found at step \ref{enum:UpperBound} of Algorithm~\ref{alg:RetrieveIntegerSolution}, and the next steps only prove its optimality.

% fast resolution of retrieving an integer solution and proves the stronger relaxation obtained with the Dantzig-Wolfe reformulation in \eqref{eq:MasterProblem}. In all cases solving step \ref{enum:UpperBound} of Algorithm~\ref{alg:RetrieveIntegerSolution} provides us the optimal integer solution, and the next steps prove its optimality.

\begin {table}[h]
\begin{center}
\begin{tabular}{|c|c|c|c|}
\hline
Instance & Final integrality & Intermediate integrality & Number of variables satisfying \\ 
 & gap (\%) & gap (\%) & $\tilde{c}_\shift \leqslant (\cupp - \clow)$\\
\hline
% 1  & 100 & 12951.6  & 0.0 & 0.0 & 3.1 \\
% 2  & 100 & 31508.8  & 0.03 & 0.6 &  \\
% 3  & 100 & 42147.6  & 0.0 & 0.0 &  \\
% 4  & 100 & 46524  & 0.03 & 1.2 &  \\
% 5  & 100 & 8782.8  & 0.0 & 0.0 &  \\
% 6  & 100 & 22078.2  & 0.03 & 0.6 &  \\
% 6  & 100 & 38394  & 0.0 & 0.0 &  \\
% 8  & 100 & 42752.4  & 0.0 & 0.0 &  \\

1  & 0.0 & 0.0& --\\
2  & 0.025 & 0.025 & 4339\\
3  & 0.0 & 0.0 & --\\
4  & 0.026 & 0.026 & 11761\\
5  & 0.0 & 0.0 & --\\
6  & 0.027 & 0.027 & 1501\\
7  & 0.0 & 0.0 & --\\
8  & 0.0 & 0.0 & --\\
\hline
\end {tabular}
\caption {Numerical results on master problem~\eqref{eq:MasterProblem} with 100 scenarios. All instances are solved to optimality. \label{tab:AFResultsComparaison100Scenarios}}
\end{center}
\end {table}

\begin {table}[h]
\begin{center}
\begin{tabular}{|c|c|c|c|}
\hline
Instance & Final integrality & Intermediate integrality & Number of variables satisfying \\ 
 & gap (\%) & gap (\%) & $\tilde{c}_\shift \leqslant (\cupp - \clow)$\\
\hline
% Deterministic value 
% 57 : 13326
% 166 : 32878.2
% 232 : 44232
% 300 : 48774
% 49 : 9056.4
% 111: 23015.4
% 210 : 40543.8
% 256 : 45042.6

% 1  & 100 & 12942.6 
% 2  & 100 & 31556.4
% 3  & 100 & 42196.2
% 4  & 100 & 46618
% 5  & 100 & 8808
% 6  & 100 & 22155.6
% 6  & 100 & 38432.4
% 8  & 100 & 42825

1  & 0.0 & 0.0 & --\\
2  & 0.0 & 0.0 & --\\
3  & 0.006 & 0.006 & 5410\\
4  & 0.036 & 0.036 & 18345\\
5  & 0.03 & 0.03 & 321 \\
6  & 0.0 & 0.0 & --\\
7  & 0.0 & 0.0 & --\\
8  & 0.0 & 0.0 & --\\
\hline
\end {tabular}
\caption {Numerical results on master problem~\eqref{eq:MasterProblem} with 200 scenarios. All instances are solved to optimality. \label{tab:AFResultsComparaison200Scenarios}}
\end{center}
\end {table}

\section{Pricing subproblem}
\label{sec:PrincingSubProblem}
\paragraph{}
We now introduce a solution scheme for the pricing problem 
\begin{equation}\label{eq:pricingSubproblem}
    \min_{\shift \in \calSH}\costshift - \sum_{\task \in \shift} \lambda_{\task} %\ref{PricingSubProblem} 
\end{equation}
and its variants solved at Step~\ref{alg:SolvePricingSubProblem} of Algorithm~\ref{alg:CGalgorithm} and Step~\ref{step:addAllSmallerGap} of Algorithm~\ref{alg:RetrieveIntegerSolution}.
The difficulty in the pricing subproblem lies in the non-linearity and the stochasticity of shift costs defined in Equation~\eqref{eq:shiftCost}.
% Indeed the stochastic cost of shift $\costshift$ is non linear in the jobs (i.e vertices), or succession of jobs (i.e arcs), as we take into account the fact that the previous job may have been realized by a backup agent. 
To overcome this difficulty, we model the pricing subproblem within the framework for stochastic resource constrained shortest path problems recently introduced by the second author \citep{parmentier2015algorithms}.

\subsection{Framework and algorithm \label{subsec:FrameworkAndAlgorithm}}

% \inlAP{Simplify: we don't need the feasibility function}

% \inlAP{Refaire un graphe / uniquement le generalized A*}

We follow \citet{2017arXiv170606901P} in their presentation of the \MRCSP  framework \citep{parmentier2015algorithms}.
\subsubsection{Algebraic framework.}
% We first present the algebra framework required for the algorithm used in \ref{subsubsec:Enumeration algorithm}. 
A binary operation $\plus$ on a set $M$ is \emph{associative} if $q \plus (q' \plus q'') = (q \plus q')\plus q''$ for $q,q',$ and $q''$ in $M$. An element $e$ is \emph{neutral} if $e \plus q =  q \plus e = q$ for any $q$ in $M$. 
% \begin{definition}
A pair $(M,\plus)$, where $M$ is a set and $\plus$ a binary operator on $M$, is a \emph{monoid} if $\plus$ is associative and admits a neutral element.
% \end{definition}
A partial order $\moins$ is \emph{compatible} with $\plus$ if for $q,q'$, and $q''$ in $M$, $q\moins q'$ implies $q \plus q'' \moins q' \plus q''$ and $ q'' \plus q \moins q'' \plus q'$. A partially ordered set $(M,\moins)$ is a \emph{lattice} if any pair $(q,q')$ of elements of $M$ admits a greatest lower bound or \emph{meet} denoted by $q\meet q'$, and a least upper bound or \emph{join} denoted by $q \join q'$.
\begin{definition}
A set $(M,\plus,\moins)$ is a \emph{lattice ordered monoid} if $(M,\plus)$ is a monoid, $(M,\moins)$ is a lattice, and $\moins$ is compatible with $\plus$.
\end{definition} 
Given a digraph $D=(V,A)$, a lattice ordered monoid $(M,\plus,\moins)$, elements $q_a\in M$ for each $a\in A$, origin and destination vertices $o$ and $d$, and two non-decreasing mappings $c : M \rightarrow \bbR$ and $\rho:M \rightarrow \{0,1\}$, the \MRCSP seeks
$$\text{an $o$-$d$ path $P$ of minimum $c\left(\bigoplus_{a\in P}q_{a}\right)$ among those satisfying $\rho\left(\bigoplus_{a\in P}q_{a}\right) = 0$.} $$ 
We call $q_a$ the {\em resource} of the arc $a$.
The sum $\bigoplus_{a\in P}q_{a}$ is the \emph{resource} of a path $P$, and we denote it by $q_{P}$. The real number $c\left(q_P\right)$ is its \emph{cost}, and the path $P$ is \emph{feasible} if $\rho\left(q_P\right)$ is equal to $0$. We therefore call $c$ and $\rho$ the {\em cost} and the {\em infeasibility functions}. %When the last vertex of $P$ is the first vertex of $Q$, we denote by $P+Q$ the path $P$ followed by the path $Q$.

\subsubsection{Enumeration algorithm.}\label{subsubsec:Enumeration algorithm}
We now describe an \emph{enumeration algorithm} for the \MRCSP.
The algorithm takes in input a collection $(B_v)_{v \in V}$ of sets $B_v$ of lower bounds $b$ in $M$ such that, for each $v$-$d$ path $Q$, there exists a bound $b$ in $B_v$ satisfying $b \leq q_Q$.
We explain later how such bounds are computed.
% % It follows the standard labeling scheme \citep{irnich2005shortest} for resource constrained shortest paths, with the terminology \emph{generalized A$^*$}.\\
% We compute for each vertex $v \in V$ a lower bound $b_{v}$ on the resource $q_{p}$ of all v-d paths. Which is interesting is that for any o-v path P and v-d path P', we thus have $q_{P} \plus b_{v} \moins q_{P} \plus q_{P'} = q_{P+P'}$ where P+P' is an o-d path composed of P followed by P'. The resource $q_{P} \plus b_{v}$ is therefore a bound for all o-d paths beginning with P, and as cost function $c$ and infeasibility function $\rho$ are both non-decreasing function:\begin{itemize}
%     \item $c(q_{P} \plus b_{v})$ is a lower bound on the cost of any o-d path beginning with P.
%     \item if $\rho(q_{P} \plus b_{v})=1$, then there is no feasible o-d path beginning with P.
% \end{itemize}
We define $\key(P)$ as 
\begin{equation}\label{eq:keyDefinition}
    \key(P) = \min \{c(q_P \plus b)\colon b \in B_v, \rho(q_P \plus b)=0\} \quad \text{where $v$ is the last vertex of $P$,}
\end{equation}
and the minimum is equal to $+\infty$ if the set is empty.
The empty path at a vertex $v$ is the path with no arcs starting and ending at vertex $v$. Its resource is the neutral element of the monoid. We denote by $P+a$ the path composed of a path $P$ followed by an arc $a$.
% A path $P$ {\em dominates} a path $Q$ if $q_P\moins q_Q$.
The algorithm maintains a list $\mathsf{L}$ of partial paths, an upper bound $c_{od}^{UB}$ on the cost of an optimal solution, and
lists $(\mathsf{L}_{v}^{\mathrm{nd}})_{v \in V}$ of non-dominated paths ending with vertex $v$
are maintained. 
Algorithm~\ref{alg:enumeration} states our algorithm. 

\begin{algorithm}
\begin{algorithmic}[1]
% \STATE \inlAP{Add lower bound test. Remove dominance test? -> yes Julie uses generalized A*}
\STATE \textbf{input}: A graph $D = (V,A)$, resource $(q_a) \in M^A$, cost and infeasibility functions $c$ and $\rho$, and sets of bounds $(B_v)_{v \in V}$, ;
\STATE \textbf{initialization:}  $c_{od}^{UB} \leftarrow +\infty$ and $\mathsf{L} \leftarrow \emptyset$ 
% \STATE  \hspace{\algorithmicindent} 
\STATE add the empty path at the origin $o$ to $\mathsf{L}$
\WHILE{$\mathsf{L}$ is not empty}
\STATE Extract from $\sfL$ a path $P$ of minimum $\key(P)$ in $\mathsf{L}$; \label{step:pathSelection} 
% \STATE $\mathsf{L} \leftarrow \mathsf{L}\backslash \{P\}$;
\STATE $v\leftarrow$ last vertex of $P$;
\IF{$v=D$, $\rho(q_P) = 0$, and $c(q_P)<c_{od}^{UB}$} \label{step:extStart}
\STATE $c_{od}^{UB} \leftarrow c(q_{P}) $; \label{step:codub}
\ELSE
% \STATE 
\FORALL{$a\in \delta^{+}(v)$ \textbf{such that} $\key(P+a) < c_{od}^{UB}$} \label{step:LBtest}
% \STATE $Q \leftarrow P+a$; 
% \STATE $w\leftarrow $ last vertex of $Q$; \label{step:Qstarts}
% \IF{ } \label{step:DomTest} 
% \STATE remove from $\mathsf{L}_{w}^{\mathrm{nd}}$ and $\mathsf{L}$ every path dominated by $Q$; \label{step:removal}
\STATE $\mathsf{L} \leftarrow \mathsf{L} \cup \{P+a\}$
% % \STATE 
% \ENDIF 
\ENDFOR
\ENDIF \label{step:extEnd}
\ENDWHILE
\STATE \textbf{return} $c_{od}^{UB}$;
\end{algorithmic}
\caption{Enumeration algorithm for the \MRCSP}
\label{alg:enumeration}
\end{algorithm}

The rationale behind this algorithm is the following: given and $o$-$v$ path $P$, $\key(P)$ is a lower bound on the cost of any feasible $o$-$d$ path starting by $P$.
Indeed, for any $v$-$d$ path $Q$, there is a $b$ in $B_v$ such that $b \preceq q_Q$, and $\key(P) \leq c(q_P \oplus b) \leq c(q_P \oplus q_Q) = c(q_{P+Q})$.
And $c_{od}^{UB}$ is the cost of the best feasible $o$-$d$ path found. 
Hence, if $\key(P) \geq c_{od}^{UB}$, there is no $o$-$d$ path starting by $P$ that is better than the best path found. The algorithm only enumerates all the paths satisfying $\key(P) < c_{od}^{UB}$.

\begin{proposition}\label{prop:MRCSPalgoConvergence}\emph{\textbf{\citet{2017arXiv170606901P}}}\\
Suppose that $D$ is acyclic. Then Algorithm~\ref{alg:enumeration} converges after a finite number of iterations, and, at the end of the algorithm, $c_{od}^{UB}$ is equal to the cost of an optimal solution of the \MRCSP\enskip if such a solution exists, and to $+\infty$ otherwise. 
\end{proposition}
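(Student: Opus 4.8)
The plan is to prove termination and correctness separately, exploiting the acyclicity of $D$ throughout.

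\textbf{Termination.} First I would argue that only finitely many paths are ever inserted into $\sfL$. Since $D$ is acyclic, there are only finitely many $o$-$v$ paths in $D$ for each vertex $v$, hence finitely many partial paths overall; every path added to $\sfL$ at the \texttt{\textbackslash FORALL} loop is a genuine simple path of $D$ (it extends an existing path by one arc, and acyclicity prevents revisiting a vertex), so the total number of insertions across the whole run is bounded by the number of paths in $D$. Moreover each iteration of the \texttt{while} loop extracts one path from $\sfL$ and never re-inserts the same path (a path of length $k$ can only be produced by extending its length-$(k-1)$ prefix, which happens at most once, when that prefix is itself extracted). Hence the loop runs finitely many times and the algorithm halts.

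\textbf{Correctness.} Let $P^\star$ be an optimal solution if one exists, with cost $c^\star = c(q_{P^\star})$; otherwise set $c^\star = +\infty$. I want to show the returned value $c_{od}^{UB}$ equals $c^\star$. The inequality $c_{od}^{UB} \geq c^\star$ is easy: $c_{od}^{UB}$ is only ever set, at Step~\ref{step:codub}, to $c(q_P)$ for a path $P$ that is an $o$-$d$ path with $\rho(q_P)=0$, i.e.\ a feasible solution, so it is always the cost of some feasible solution (or $+\infty$), hence $\geq c^\star$. For the reverse inequality, suppose a feasible solution exists, so $c^\star < \infty$. I would prove by induction on its length that every prefix $P_i$ of $P^\star$ (including $P^\star$ itself) is eventually extracted from $\sfL$ at Step~\ref{step:pathSelection}, \emph{unless} at the moment of some extraction we already have $c_{od}^{UB} \le c^\star$. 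The induction uses the key monotonicity fact, already noted in the excerpt: for any $o$-$v$ path $P$ and any $v$-$d$ path $Q$, there is $b \in B_v$ with $b \preceq q_Q$, and since $c,\rho$ are non-decreasing and $\preceq$ is compatible with $\plus$, one gets $\key(P) \le c(q_P \plus q_Q) = c(q_{P+Q})$; applied with $P = P_i$ and $Q$ the remaining suffix of $P^\star$, this yields $\key(P_i) \le c^\star$. Thus as long as $c_{od}^{UB} > c^\star$, the test $\key(P_i+a) < c_{od}^{UB}$ at Step~\ref{step:LBtest} is satisfied for the arc $a$ continuing along $P^\star$, so once $P_i$ is extracted its successor prefix $P_{i+1}$ enters $\sfL$; and the "minimum $\key$" selection rule together with finiteness of $\sfL$ guarantees that any path sitting in $\sfL$ with $\key$-value $\le c^\star < c_{od}^{UB}$ is eventually extracted (it cannot be starved, since only finitely many paths of key value $< c_{od}^{UB}$ are ever created and each extraction removes one). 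Carrying the induction to $i$ equal to the length of $P^\star$: either at some earlier point $c_{od}^{UB}$ dropped to $\le c^\star$, in which case we are done by the first inequality, or $P^\star$ itself is extracted, Step~\ref{step:extStart} fires with $v = d$, $\rho(q_{P^\star}) = 0$ and $c(q_{P^\star}) = c^\star$, and $c_{od}^{UB}$ is set to $c^\star$ (if it was not already that small). Either way $c_{od}^{UB} \le c^\star$ at termination. Finally, if no feasible solution exists, the first inequality already forces $c_{od}^{UB}$ to remain $+\infty$, since Step~\ref{step:codub} is never reached.

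\textbf{Main obstacle.} The delicate point is the "eventually extracted" claim in the induction: one must rule out that a relevant prefix $P_i$ lingers forever in $\sfL$ while the algorithm busies itself with other paths. This is where acyclicity and the priority (minimum-$\key$) extraction are both essential — acyclicity bounds the pool of candidate paths, and the priority rule ensures a path with key value strictly below the current $c_{od}^{UB}$ is never indefinitely postponed in favour of higher-key paths. I would phrase this carefully, perhaps via a well-founded measure (number of paths in $\sfL$ with key $\le c^\star$, which strictly decreases whenever such a path is extracted and to which Step~\ref{step:LBtest} adds at most finitely many new paths over the whole run), to make the "no starvation" argument rigorous. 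Since the statement is attributed to \citet{2017arXiv170606901P}, the write-up can reasonably cite that source for the detailed verification and present the above as the proof sketch.
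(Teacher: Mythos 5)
Your argument is correct. Note, however, that the paper itself gives no proof of this proposition: the appendix simply states that it is established in \citet{2017arXiv170606901P}, so there is no in-paper argument to compare against. What you have written is the standard A$^*$-style correctness proof one would expect from that reference: termination from acyclicity plus the fact that each path enters $\mathsf{L}$ at most once (being produced only when its unique maximal proper prefix is extracted), the lower-bound property $\key(P)\leq c(q_{P+Q})$ for any feasible completion $Q$ (which also needs, and you implicitly use, that $\rho(q_P\plus b)=0$ for the bound $b\preceq q_Q$, so the set in the definition of $\key$ is non-empty), and the induction along the prefixes of an optimal path. One remark on your ``main obstacle'': the no-starvation concern is moot here, because the while loop only terminates when $\mathsf{L}$ is empty, so termination alone already guarantees that every path ever inserted into $\mathsf{L}$ is eventually extracted; the minimum-$\key$ selection rule is a performance heuristic, not something the correctness proof needs, and your well-founded-measure machinery can be dropped. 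With that simplification the proof is complete and self-contained.
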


This algorithm differs from the resource constrained shortest path algorithms in the literature \citep{irnich2005shortest} by the fact that it relies on bounds to discard paths, while the standard algorithms rely on dominance. 
On stochastic resource constrained shortest path problems, dominance between paths is rare and does not enable to discard paths efficiently \citep{parmentier2015algorithms}.
Algorithm~\ref{alg:enumeration} is in fact a generalization to resource constrained shortest path problem of the A$^*$ algorithm \citep{bast2016route}.
One advantage of the \MRCSP framework \citep{parmentier2015algorithms} is that it gives an algorithm which,
 given an instance of the \MRCSP and an integer $\kappa$,
 computes sets of bounds $B_v$ of size $\kappa$ that can be used by Algorithm~\ref{alg:enumeration}.
 In our numerical experiments, we use this algorithm with $\kappa = 100$, following the advices of \citet[Section 4.3]{2017arXiv170606901P} on the choice of $\kappa$. 
%  to .
% We use the algorithm of which,, computes sets of bounds $B_v$ of size $\kappa$ with the desired property.

\begin{remark}
Step~\ref{eq:pricingSubproblem} of Algorithm~\ref{alg:CGalgorithm} requires to solve the following variant of the \MRCSP.
\begin{center}
\emph{Find an $o$-$d$ path $P$ satisfying $c(q_P) \leq \Delta$ or an $o$-$d$ path $Q$ of minimum $c(q_Q)$.}
\end{center}
\noindent
Algorithm~\ref{alg:enumeration} is easily adapted to this variant. Indeed, it suffices to stop the algorithm and return the path $P$ at Step~\ref{step:codub} if $c(q_P) \leq \Delta$. Step~\ref{step:addAllSmallerGap} of Algorithm~\ref{alg:RetrieveIntegerSolution} requires to solve the following variant
\begin{center}
\emph{Generate all the $o$-$d$ paths $P$ satisfying $c(q_P) \leq \cupp - \clow$.}
\end{center}
\noindent Again, we can easily adapt Algorithm~\ref{alg:enumeration}. 
It suffices to maintain a set $\mathsf{S}$ of solutions (initially empty), to replace $c_{od}^{UB}$ by $c^{\mathrm{upp}} - c^{\mathrm{low}}$ in Steps~\ref{step:extStart} and~\ref{step:LBtest}, 
to replace Step~\ref{step:codub} by $\mathsf{S} \leftarrow \mathsf{S} \cup \{P\}$, and to return $\mathsf{S}$. 
The set $\mathsf{S}$ returned contains all the $o$-$d$ paths $P$ satisfying 
$\rho(q_{P}) = 0$ and $c(q_{P}) \leq c^{\mathrm{upp}} - c^{\mathrm{low}}$.
\end{remark}

\subsection{Modeling the pricing sub-problem}
We now explain how to model our pricing sub-problem \eqref{eq:pricingSubproblem} as a \MRCSP.

\subsubsection{Digraph.} % (fold)
\label{ssub:digraph}

We use the digraph $D$ defined in Section~\ref{sec:compactMIP}. As Proposition~\ref{prop:bijectionDigraph} ensures that there is a bijection between $o$-$d$ paths in $D$ and well scheduled shifts, we only need to define a lattice ordered monoid to take into account shifts costs.

% subsubsection digraph (end)

\subsubsection{A lattice ordered monoid modeling job successions.} 
% As the difficulty lies in the expectancy of the number of jobs realized by backup agents in the cost $\costshift$, we first present a monoid modeling this precise number. This will make the description of the complete monoid an easier job.
% \paragraph{}
We now introduce a monoid that enables to count the number of rescheduled jobs in the shift corresponding to an $o$-$d$ path.
Consider an $o$-$d$ path $P$ composed of an $o$-$v$ path $Q$ followed by a $v$-$d$ path $R$.
Then the jobs in $R$ that will be rescheduled in the shift of $P$ depends on $Q$. 
% Indeed, the reschedluing of a job $j$ depends on the jobs before $j$ in the shift $\sh$ containing $j$.
Indeed, consider the red path $P$  on Figure~\ref{fig:ModelingMIP}, that we partition into $Q$ composed of $o$, $(j_2,\hb[2],\bl)$, and $(j_3,\hb[2],\al)$, and $R$ composed of $(j_3,\hb[2],\bl)$, $\he[1]$, and $d$.
 % of the red path on Figure~\ref{fig:ModelingMIP}.
Suppose that $\xe[j_2] + \tbr > \xb[j_3]$.
Then within the shift encoded by the red path, $j_3$ is rescheduled. 
But within the shift encoded by the path $P'$ obtained by replacing $Q$ by $o$, $(j_3,\hb[2],\al)$, job $j_3$ is not rescheduled. 
Hence, the resource of the $v$-$d$ path $R$ must contain enough information to identify the number of jobs of $R$ that will be rescheduled in any $o$-$d$ path ending by $R$.
But remark that, given a subpath $R$ of an $o$-$d$ path $P$, knowing if the first job of $R$ is rescheduled in the shift $\sh$ of $P$, rules~\ref{case:succ} to~\ref{case:late} enable to identify which jobs of $R$ are rescheduled in $\sh$.
Figure~\ref{fig:ExModelResource} illustrates jobs corresponding to such subpaths.
Hence, in all the $o$-$d$ paths containing $R$, there are only two possible options on the jobs of $R$ that will be rescheduled: the jobs rescheduled when the first job is rescheduled, and the jobs rescheduled when it is not.
We now build our monoid based on this observation.

% This complicates 

% We start by defining a lattice ordered monoid that enables to identify re-scheduled job in a shift. 

% The main difficulty lies in the fact that the resource of a $v$-$d$ path $Q$ must contain enough information to identify which jobs of this $v$-$d$ path will be rescheduled in any $o$-$d$ path $P$ ending by $Q$.
% And the jobs of $Q$ that will be rescheduled depends of the vertices before $Q$ in $P$.

% Let $\sh$ be a subsequence of a shift.
% The main difficulty in modeling our problem in the framework of Section~\ref{subsec:FrameworkAndAlgorithm} lies in the fact that
% the jobs of $\sh$ that are rescheduled in a full shift $\sh'$ containing $\sh$ depends on the jobs before $\sh$ in $\sh'$.

% But, as we illustrate on Figure~\ref{fig:ExModelResource},
% Consider now a subpath $R$ of an $o$-$d$ path $P$. 

% , , knowing if the first job of $R$ is rescheduled

%  enable to identify which jobs of $\sh$ will be rescheduled in any full shift $\sh'$ containing $\sh$ and such that the first job of $\sh$ is rescheduled (resp.~not rescheduled). 

Let $\sbg$, $\sdo$, and $\sdt$ be respectively identified with $\bbZ_+$, $\bbZ_+^2$, and $\bbZ_+^2$. Elements of $\sbg$, $\sdo$, and $\sdt$ are respectively denoted $\rbg$, $\rdo$, and $\rdt$, where $\tbg$, $\cdo$, $\tdo$, $\cdt$, and $\tdt$ belong to $\bbZ_+$.
The monoid $\monoidS$ we use to model the number of jobs realized by back-up agents is of the form $\monoidS = \{e,\infty \} \cup S $ where $S$ is the following subset of $\sbg \times \sdo \times \sdt$ 
\begin{equation}\label{eq:definitionMonoidSet}
    S = \left\{\res %\text{ in } \sbg\times\sdo \times \sdt \text{ satisfying }
    % \colon \tb, \coutUn, \heurefinUn, \coutNun, \text{ and } \heurefinNun \text{ in } \bbZ_+ \text{ satisfy }  
    \left|
    \begin{array}{ll}
    \heurefinUn < \heurefinNun \enskip &  \Rightarrow \coutUn = \coutNun \\
    \heurefinUn >\heurefinNun  \enskip &  \Rightarrow \coutUn = \coutNun - 1 \\
    \heurefinUn = \heurefinNun \enskip &  \Rightarrow (\coutUn = \coutNun \text{ or } \coutUn = \coutNun -1)
    \end{array}
    \right.\right\}
\end{equation}
% \inlJP{Je le trouvais plus logique avec des implications en fait je crois. En tout cas toutes les preuves sont ecrites comme ca, et pas le temps de les refaire}
% $S$ is the subset of $\bbRplus^{3} \times \mathbb{N}^{2}$ composed of tuples %$\big(\mathsf{st}(\tb), \mathsf{do}(c^1, t^1), \dt(c^{\centernot{1}},t^{\centernot{1}}) \big)$ Try $\dt$
% $(\heuredeb,\heurefinUn,\heurefinNun,\coutUn,\coutNun)$ satisfying \[\left\{\begin{array}{ll}
%     \heurefinUn < \heurefinNun \enskip & \enskip \Rightarrow \coutUn = \coutNun \\
%     \heurefinNun < \heurefinUn \enskip & \enskip \Rightarrow \coutUn = \coutNun - 1 \\
%     \heurefinUn = \heurefinNun \enskip & \enskip \Rightarrow \coutUn = \coutNun \text{ or } \coutUn = \coutNun -1
%     \end{array}\right.\]
The semantic of our monoid is as follows.
Each path $P$ in $D$ is decorated by a vector of resources in $\monoidS$, one for each scenario $\omega$.
Element $e$ is the neutral element and decorates paths $P$ containing no arcs. 
Element $\infty$ is the supremum. % \inlJP{C'est vrai ca?}
% and should not decorate any path. 
Suppose now that a path $P$ is decorated with $\res$ for $\omega$, and let $\sh$ be the corresponding partial shift.
% Then $\res$ contains all the information needed about jobs under some scenario $\omega$.
Then $\rbg$ indicates the time $\tbg  = \xb[j](\omega)$ at which the first job $j$ of $\sh$ starts under $\omega$, being it rescheduled or not. 
Component $\rdo$ (resp.~$\rdt$) indicates the number $\cdo$ (resp.~$\cdt$) of rescheduled jobs in $\sh$ and the time $\tdo$ (resp~$\tdt$) at which the team operating $\sh$ finishes the last non rescheduled job of $\sh$ under the hypothesis that the first job of $\sh$ is not rescheduled (resp.~rescheduled).
These different quantities are illustrated on Figure~\ref{fig:ExModelResource}.
% Component $\rdt$ indicates the time $\tdo$ at which the team operates $\sh$ finishes the last job of $\sh$ it operates and the number of backed-up jobs in $\sh$ under $\omega$ if the first job of $\sh$ is operated by a back-up agent.
 % contain all required information on a sequence of jobs under a scenario $\oscenario$. $\heuredeb$ models the beginning time of the sequence of jobs, i.e. the beginning time of the first job. $\heurefinUn$ models the ending time of the sequence of jobs if the first jobs is realized by the initial agent. Its value corresponds to the ending time of the last job of the sequence if the latest is realized by the initial agent, zero otherwise. On contrary, $\heurefinNun$ models the ending time of the sequence of jobs if the first jobs is not realized by the initial agent. Its value corresponds to the ending time of the last job of the sequence if the latest is realized by the initial agent, zero otherwise. $\coutUn$ corresponds to the number of jobs realized by backup agents in the case where the first job of the sequence has been realized by the initial agent. Finally $\coutNun$ corresponds to the number of jobs realized by backup agents in the case where the first job of the sequence has not been realized by the initial agent.

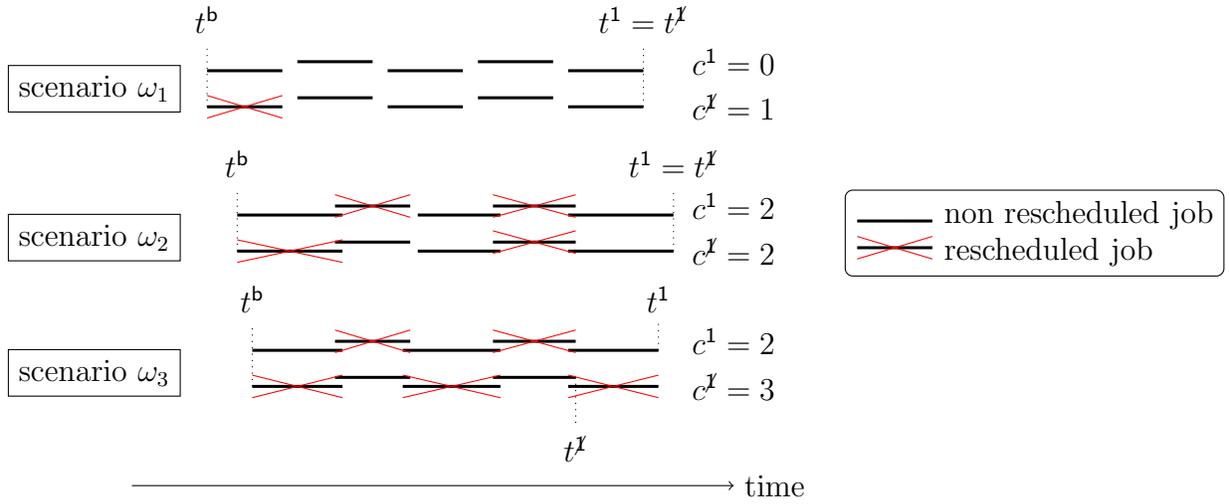
\begin{figure}[h]
    \centering
    \begin{tikzpicture}
            \def\l{1}
            \def\h{0.6}
            \def\xh{.15}
                %ligne horaire
                \draw[->] (-1*\l,-8.2*\h)--(7*\l,-8.2*\h) node[right]{time};

                % %\omega_1
                \draw[line width = 0.4mm] (0*\l,1*\h)--(1*\l,1*\h);
                \draw[line width = 0.4mm] (1.2*\l,1*\h+0.2*\h)--(2.2*\l,1*\h+0.2*\h);
                \draw[line width = 0.4mm] (2.4*\l,1*\h)--(3.4*\l,1*\h);
                \draw[line width = 0.4mm] (3.6*\l,1*\h+0.2*\h)--(4.6*\l,1*\h+0.2*\h);
                \draw[line width = 0.4mm] (4.8*\l,1*\h)--(5.8*\l,1*\h);

                \draw[line width = 0.4mm] (0*\l,1*\h - 0.8*\h)--(1*\l,1*\h - 0.8*\h);
                \draw[line width = 0.4mm] (1.2*\l,1*\h+0.2*\h - 0.8*\h)--(2.2*\l,1*\h+0.2*\h - 0.8*\h);
                \draw[line width = 0.4mm] (2.4*\l,1*\h - 0.8*\h)--(3.4*\l,1*\h - 0.8*\h);
                \draw[line width = 0.4mm] (3.6*\l,1*\h+0.2*\h - 0.8*\h)--(4.6*\l,1*\h+0.2*\h - 0.8*\h);
                \draw[line width = 0.4mm] (4.8*\l,1*\h - 0.8*\h)--(5.8*\l,1*\h - 0.8*\h);
                \draw[red] (0*\l,1*\h - 0.8*\h + \xh)--(1*\l,1*\h - 0.8*\h - \xh);
                \draw[red] (0*\l,1*\h - 0.8*\h - \xh)--(1*\l,1*\h - 0.8*\h + \xh);

                \draw[dotted] (0*\l,1*\h - 0.8*\h) -- ++(0,1.4*\h) node[above]{$\tbg$};
                \draw[dotted] (5.8*\l,1*\h - 0.8*\h) -- ++(0,1.4*\h) node[above]{$\tdo = \tdt$};
                \node at (7*\l,1.2*\h) {$\cdo = 0$};
                \node at (7*\l,0.2*\h) {$\cdt = 1$};

                 %\omega_2
                \draw[line width = 0.4mm] (0.4*\l,-2.2*\h)--(1.8*\l,-2.2*\h);
                \draw[line width = 0.4mm] (1.7*\l,-2.2*\h+0.2*\h)--(2.7*\l,-2.2*\h+0.2*\h);
                \draw[line width = 0.4mm] (2.8*\l,-2.2*\h)--(3.9*\l,-2.2*\h);
                \draw[line width = 0.4mm] (3.8*\l,-2.2*\h+0.2*\h)--(4.9*\l,-2.2*\h+0.2*\h);
                \draw[line width = 0.4mm] (4.8*\l,-2.2*\h)--(6.2*\l,-2.2*\h);
                \draw[red] (1.7*\l,-2.2*\h+0.2*\h+\xh)--(2.7*\l,-2.2*\h+0.2*\h-\xh);
                \draw[red] (1.7*\l,-2.2*\h+0.2*\h-\xh)--(2.7*\l,-2.2*\h+0.2*\h+\xh);
                \draw[red] (3.8*\l,-2.2*\h+0.2*\h+\xh)--(4.9*\l,-2.2*\h+0.2*\h-\xh);
                \draw[red] (3.8*\l,-2.2*\h+0.2*\h-\xh)--(4.9*\l,-2.2*\h+0.2*\h+\xh);

                \draw[line width = 0.4mm] (0.4*\l,-2.2*\h-0.8*\h)--(1.8*\l,-2.2*\h-0.8*\h);
                \draw[line width = 0.4mm] (1.7*\l,-2.2*\h+0.2*\h-0.8*\h)--(2.7*\l,-2.2*\h+0.2*\h-0.8*\h);
                \draw[line width = 0.4mm] (2.8*\l,-2.2*\h-0.8*\h)--(3.9*\l,-2.2*\h-0.8*\h);
                \draw[line width = 0.4mm] (3.8*\l,-2.2*\h+0.2*\h-0.8*\h)--(4.9*\l,-2.2*\h+0.2*\h-0.8*\h);
                \draw[line width = 0.4mm] (4.8*\l,-2.2*\h-0.8*\h)--(6.2*\l,-2.2*\h-0.8*\h);
                \draw[red] (0.4*\l,-2.2*\h+\xh-0.8*\h)--(1.8*\l,-2.2*\h-\xh-0.8*\h);
                \draw[red] (0.4*\l,-2.2*\h-\xh-0.8*\h)--(1.8*\l,-2.2*\h+\xh-0.8*\h);
                \draw[red] (3.8*\l,-2.2*\h-0.8*\h+0.2*\h+\xh)--(4.9*\l,-2.2*\h-0.8*\h+0.2*\h-\xh);
                \draw[red] (3.8*\l,-2.2*\h-0.8*\h+0.2*\h-\xh)--(4.9*\l,-2.2*\h-0.8*\h+0.2*\h+\xh);
                
                \draw[dotted] (0.4*\l,-2.2*\h-0.8*\h) -- ++(0,1.4*\h) node[above]{$\tbg$};
                \draw[dotted] (6.2*\l,-2.2*\h-0.8*\h) -- ++(0,1.4*\h) node[above]{$\tdo = \tdt$};
                \node at (7*\l,1.2*\h-3.2*\h) {$\cdo = 2$};
                \node at (7*\l,0.2*\h-3.2*\h) {$\cdt = 2$};               
                 %\omega_3
                \draw[line width = 0.4mm] (0.6*\l,-5.2*\h)--(1.8*\l,-5.2*\h);
                \draw[line width = 0.4mm] (1.7*\l,-5.2*\h+0.2*\h)--(2.7*\l,-5.2*\h+0.2*\h);
                \draw[line width = 0.4mm] (2.6*\l,-5.2*\h)--(3.9*\l,-5.2*\h);
                \draw[line width = 0.4mm] (3.8*\l,-5.2*\h+0.2*\h)--(4.9*\l,-5.2*\h+0.2*\h);
                \draw[line width = 0.4mm] (4.8*\l,-5.2*\h)--(6.0*\l,-5.2*\h);
                \draw[red] (1.7*\l,-5.2*\h+0.2*\h+\xh)--(2.7*\l,-5.2*\h+0.2*\h-\xh);
                \draw[red] (1.7*\l,-5.2*\h+0.2*\h-\xh)--(2.7*\l,-5.2*\h+0.2*\h+\xh);
                \draw[red] (3.8*\l,-5.2*\h+0.2*\h+\xh)--(4.9*\l,-5.2*\h+0.2*\h-\xh);
                \draw[red] (3.8*\l,-5.2*\h+0.2*\h-\xh)--(4.9*\l,-5.2*\h+0.2*\h+\xh);

                \draw[line width = 0.4mm] (0.6*\l,-5.2*\h-0.8*\h)--(1.8*\l,-5.2*\h-0.8*\h);
                \draw[line width = 0.4mm] (1.7*\l,-5.2*\h+0.2*\h-0.8*\h)--(2.7*\l,-5.2*\h+0.2*\h-0.8*\h);
                \draw[line width = 0.4mm] (2.6*\l,-5.2*\h-0.8*\h)--(3.9*\l,-5.2*\h-0.8*\h);
                \draw[line width = 0.4mm] (3.8*\l,-5.2*\h+0.2*\h-0.8*\h)--(4.9*\l,-5.2*\h+0.2*\h-0.8*\h);
                \draw[line width = 0.4mm] (4.8*\l,-5.2*\h-0.8*\h)--(6.0*\l,-5.2*\h-0.8*\h);
                \draw[red] (0.6*\l,-5.2*\h+\xh-0.8*\h)--(1.8*\l,-5.2*\h-\xh-0.8*\h);
                \draw[red] (0.6*\l,-5.2*\h-\xh-0.8*\h)--(1.8*\l,-5.2*\h+\xh-0.8*\h);
                \draw[red] (2.6*\l,-5.2*\h+\xh-0.8*\h)--(3.9*\l,-5.2*\h-\xh-0.8*\h);
                \draw[red] (2.6*\l,-5.2*\h-\xh-0.8*\h)--(3.9*\l,-5.2*\h+\xh-0.8*\h);
                \draw[red] (4.8*\l,-5.2*\h+\xh-0.8*\h)--(6.0*\l,-5.2*\h-\xh-0.8*\h);
                \draw[red] (4.8*\l,-5.2*\h-\xh-0.8*\h)--(6.0*\l,-5.2*\h+\xh-0.8*\h);
                
                \draw[dotted] (0.6*\l,-5.2*\h-0.8*\h) -- ++(0,+1.4*\h) node[above]{$\tbg$};
                \draw[dotted] (6.0*\l,-5.2*\h) -- ++(0,0.6*\h) node[above]{$\tdo$};
                \draw[dotted] (4.9*\l,-5.2*\h+0.2*\h-0.8*\h) -- ++(0,-\xh-0.8*\h) node[below]{$\tdt$};
                \node at (7*\l,1.2*\h-6.2*\h) {$\cdo = 2$};
                \node at (7*\l,0.2*\h-6.2*\h) {$\cdt = 3$};

                %Scenario
                \node[draw] at (-1.5*\l,0.6*\h) {scenario $\omega_1$};
                \node[draw] at (-1.5*\l,-2.7*\h) {scenario $\omega_2$};
                \node[draw] at (-1.5*\l,-5.7*\h) {scenario $\omega_3$};

                 %legend
            \node[draw,rounded corners = 0.10cm, align = left] at (11,-2.6*\h) {
                \tikz{ 
                    \draw[line width = 0.4mm] (0,0) -- (\l,0);
                } 
                non rescheduled job \\
                \tikz{
                    \draw[line width = 0.4mm] (0,0) -- (\l,0);
                    \draw[red] (0,\xh) -- (\l,-\xh);
                    \draw[red] (0,-\xh) -- (\l,\xh);
                }
                rescheduled job
            };
            \end{tikzpicture}
    \caption{Partial shift $\sh$ with five jobs with rescheduled and non-rescheduled first job under two different scenarios}
    \label{fig:ExModelResource}
\end{figure}

% Consider a resource  partial shift $\sh$ under scenario $\omega$

We now explain the rationale behind the constraints that elements of $\sbg\times\sdo\times\sdt$ must satisfy to be in $S$. % of Equation~\eqref{eq:definitionMonoidSet} that elements of $\sbg\times\sdo\times\sdt$ must satisfy to be in $S$. % are illustrated on Figure~\ref{fig:ExModelResource}.
Let $P$ be a path decorated with $\res$ for $\omega$, and let $\sh$ be the corresponding partial shift.
If there are two successive jobs $j$ and $j'$ in $\sh$ such that $\xe \leq \xb$, then consider $j$ and $j'$ to be the first pair of such jobs, then necessarily $\tdo = \tdt$, as illustrated by scenarios~$\omega_1$ and $\omega_2$ on Figure~\ref{fig:ExModelResource}. 
In that case, $\cdo = \cdt - 1$ if there is an odd number of jobs strictly before $j'$ in $\sh$, and $\cdo = \cdt$ is there is an even number.
These two possibilities are illustrated by scenarios $\omega_1$ and $\omega_2$ on Figure~\ref{fig:ExModelResource}.
% as is the case for scenario $\omega_1$ on Figure~\ref{fig:ExModelResource},
% and $\cdo = \cdt$ is there is an even number, as illustrated by scenario $\omega_2$.
If there are no two successive jobs $j$ and $j'$ in $\sh$ such that $\xe[\sh] \leq \xb[sh]$,
then either there is an even number of jobs in $\sh$, and we have $\tdo > \tdt$ and $\cdo = \cdt$, or there is an odd number, and we have $\tdo < \tdt$ and $\cdo = \cdt - 1$, as illustrated by scenario $\omega_3$ on Figure~\ref{fig:ExModelResource}.
This result is explicitly proven in the appendix, for a more detailed explanation see the proof of Proposition~\ref{prop:wellScheduled}.

% !!!!!!!!!!!!!!!!
% Figure \ref{fig:ExModelResource} presents the same sequence of jobs in two different scenarios. Red crossed job means that the job has been realized by a backup agent, due to delay on the previous job. According to our simplifying assumption, there isn't two crossed jobs in a row. In Figure \ref{fig:ExModelResource} the resource for the first scenario is (2p.m.,9p.m.,9p.m.,0,1), and for the second (2p.m.,9p.m.,0a.m.,2,3).

\paragraph{}
We define the operator $\plus$ on $\monoidS$ as follows: \begin{align*}
     q \plus e = e\plus q &= q, \enskip \text{ for all } q \in \monoidS\\
     q \plus \infty = \infty \plus q &= \infty, \enskip \text{ for all } q \in \monoidS\\
     \res[a] \oplus \res[b]
      &= \res  
      \end{align*}
      where $\rbg = \rbg[a]$, and
      \begin{alignat*}{3}
      &
      \left\{ 
      \begin{array}{l}
      \coutUn = \coutUn_{a} + \coutUn_{b}, \\
      \heurefinUn = \heurefinUn_{b}, 
      \end{array}
      \right.
      \text{if }\heurefinUn_{a} \leq \heuredeb_{b}, 
      & \quad \text{and} \quad
      \left\{ 
      \begin{array}{l}
      \cdo = \cdo_{a} + \cdt_{b},\\
      \tdo = \tdt_{b},  
      \end{array}
      \right.
      & \text{ otherwise,}\\
      %--------------------------
      \text{and} \quad
      &\left\{ 
      \begin{array}{l}
      \cdt = \cdt_{a} + \cdo_{b}, \\
      \tdt = \tdo_{b},
      \end{array}
      \right. 
      \text{if } \tdt_{a} \leq \tbg_{b}
      & \quad \text{and} \quad
      \left\{ 
      \begin{array}{l}
      \cdt = \cdt_{a} + \cdt_{b},\\
      \tdt = \tdt_{b}, 
      \end{array}
      \right. 
      &  \text{otherwise.}
      \end{alignat*}
Remark that $\plus $ is not commutative.
The different cases in the definition of operator $\plus$ depend on the possibility to operate the first job of the second sequence after the last non rescheduled job of the first sequence.
% if the last job of the first sequence ends after the beginning of the first job of the second sequence, the succession cannot be realized and the initial agent does not realize the first job of the second sequence. 
\begin{lemma} \label{lemma:Monoid}
$(\monoidS,\plus)$ is a monoid.
\end{lemma}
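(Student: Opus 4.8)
To prove that $(\monoidS,\plus)$ is a monoid, I need to establish two things: that $e$ is a neutral element, and that $\plus$ is associative. The neutral element part is immediate from the definition, since $q \plus e = e \plus q = q$ by construction. The real work is associativity, and this is where I expect the main difficulty to lie.

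First I would handle the degenerate cases involving $e$ and $\infty$. For any $q,q',q''$ in $\monoidS$, if any one of them equals $\infty$, then both $(q\plus q')\plus q''$ and $q\plus(q'\plus q'')$ evaluate to $\infty$, using the absorbing property of $\infty$ and the fact that $e \neq \infty$; similarly if any of them equals $e$, the identity reduces to the associativity of the remaining two elements (or is trivial). So it remains to prove associativity for $q = \res[a]$, $q' = \res[b]$, $q'' = \res[c]$ all lying in $S$. I would also want to note that $S$ is closed under $\plus$, i.e. that $\res[a]\plus\res[b] \in S$ whenever $\res[a],\res[b]\in S$; this requires checking that the three implications defining $S$ in~\eqref{eq:definitionMonoidSet} are preserved, which follows by a short case analysis on whether $\heurefinUn_a \le \heuredeb_b$ and whether $\tdt_a \le \tbg_b$, combined with the hypotheses that $\res[a]$ and $\res[b]$ already satisfy the $S$-constraints.

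For the associativity itself, the cleanest route is to use the \emph{semantic interpretation} of the monoid rather than brute-force algebra. Each element of $S$ encodes the data of a partial shift under a fixed scenario: the start time $\tbg$, and for each of the two hypotheses (first job rescheduled or not) the number of rescheduled jobs and the finishing time of the last non-rescheduled job. Concatenating two partial shifts is an associative operation on actual shifts; the operator $\plus$ is precisely the induced operation on the encoded data, provided one checks that the encoding captures exactly the information needed to compute the concatenation. So my plan is: (1) show that for $\res[a],\res[b]\in S$, the value $\res[a]\plus\res[b]$ correctly encodes the concatenation of any two partial shifts encoded by $\res[a]$ and $\res[b]$ respectively (here one uses that knowing whether the first job of the second piece is rescheduled determines, via rules~\ref{case:succ}--\ref{case:late}, all subsequent reschedulings — the observation already made in the text before Lemma~\ref{lemma:Monoid}); (2) conclude that $(\res[a]\plus\res[b])\plus\res[c]$ and $\res[a]\plus(\res[b]\plus\res[c])$ both encode the triple concatenation, hence are equal. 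Alternatively, if a purely algebraic proof is preferred, I would expand both sides according to the four-way case split ($\heurefinUn_a \lessgtr \heuredeb_b$ crossed with the second condition) and, using the $S$-constraints to relate $\cdo$ and $\cdt$ components, verify that all cases collapse correctly — but this is more error-prone.

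The main obstacle is the case proliferation in the algebraic approach: the definition of $\plus$ already has two independent binary branches, so $(q\plus q')\plus q''$ involves nested branching, and matching it against $q\plus(q'\plus q'')$ requires carefully tracking how the $\tdo$/$\tdt$ and $\cdo$/$\cdt$ components propagate, using the defining implications of $S$ to rule out inconsistent combinations. I expect the semantic argument to sidestep most of this, at the cost of needing a precise lemma stating what it means for an element of $S$ to "encode" a partial shift and that $\plus$ implements concatenation; formalizing that lemma (essentially the content of the discussion following~\eqref{eq:definitionMonoidSet}) is the part that needs the most care, and it is presumably where the appendix proof spends its effort. Since the problem statement says all proofs are postponed to the appendix, I would present the semantic lemma explicitly and then derive both closure under $\plus$ and associativity as corollaries.
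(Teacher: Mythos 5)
Your decomposition of the task (neutral element, absorbing cases for $e$ and $\infty$, closure of $S$ under $\plus$, associativity on $S$) matches the paper's, and your description of the closure argument is essentially what the appendix does. The difficulty is with your primary route to associativity. The semantic argument --- elements of $S$ encode partial shifts, $\plus$ implements concatenation, concatenation of shifts is associative --- can only establish $(q\plus q')\plus q''=q\plus(q'\plus q'')$ for triples each of which actually \emph{is} the encoding of some partial shift under some scenario. But $S$ is defined in~\eqref{eq:definitionMonoidSet} purely by the numerical implications relating $(\cdo,\tdo)$ to $(\cdt,\tdt)$; in particular the component $\tbg$ is unconstrained, so $S$ contains elements such as $\res$ with $\tbg$ strictly larger than $\tdo$ and $\tdt$, which encode no partial shift at all (the ``last job'' would end before the ``first job'' starts). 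The lemma claims the monoid identity on all of $\monoidS$, and the framework genuinely uses it beyond path resources (the lower bounds fed to the enumeration algorithm are meets, which need not be realizable either). So the semantic lemma you propose, even once formalized, does not prove the statement; you would still have to fall back on the algebraic verification --- the nested case analysis over the branches $\heurefinUn_a\le\heuredeb_b$ and $\tdt_a\le\tbg_b$ and their analogues for the second summand, using the implications of~\eqref{eq:definitionMonoidSet} to resolve the components --- which is exactly what the paper's appendix proof carries out and which you explicitly defer as ``more error-prone.''

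A second, smaller caveat if you do pursue the semantic route for the realizable part: do not derive ``the resource of a concatenated path equals the $\plus$ of the resources of its two pieces'' by induction on the length of the second piece, since that induction step is itself an instance of associativity. The non-circular version must show in a single step, directly from the definition of $\plus$ and rules~\ref{case:succ}--\ref{case:late}, that $\res[a]\plus\res[b]$ is the encoding of the concatenation; that statement is closer to Proposition~\ref{prop:FinalCost} than to anything needed for Lemma~\ref{lemma:Monoid}, and in the paper it is proved \emph{after} (and using) the monoid structure, not before it.
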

The proof, which is available in appendix, shows that $\monoidS$ is stable by $\plus$ and the associativity of~$\plus$.
We define the operator $\moins$ on $\monoidS$ as follows.
\begin{align*}
    e \moins q \enskip \text{ and } \enskip q\moins \infty \enskip &\text{ for all } q \in \monoidS, \\
    \res[a] \moins \res[b]
    % (\heuredeb_{a},\heurefinUn_{a},\heurefinNun_{a},\coutUn_{a},\coutNun_{a}) \moins (\heuredeb_{b},\heurefinUn_{b},\heurefinNun_{b},\coutUn_{b},\coutNun_{b}) 
    & \enskip \text{ if } \enskip \left \{ \begin{array}{rl}
        \heuredeb_{a} &\geqslant \heuredeb_{b}\\
        (\coutUn_{a},\heurefinUn_{a})&\moinslex (\coutUn_{b},\heurefinUn_{b})\\
        (\coutNun_{a},\heurefinNun_{a})&\moinslex (\coutNun_{b},\heurefinNun_{b})
    \end{array}
    \right.
\end{align*}
where $\moinslex$ corresponds to the lexicographical order on $\bbZ_+^{2}$.
The operator $\moins$ ensures that a resource with lower cost both when the first job is rescheduled and when it is not, is considered as lower. 
% \begin{remark}
% The order $\moins $ is a partial order on $\monoidS$.
% \end{remark}
\begin{lemma} \label{lemma:OrderCompatible}
Order $\moins$ is compatible with the operator $\plus$.
% Let $q_{a},q_{b} \in \monoidS$ such that $q_{a} \moins q_{b}$.\\
% Then, for all $x \in \monoidS$, $q_{a} \plus x \moins q_{b} \plus x$ and $x \plus q_{a} \moins x \plus q_{b}$, i.e. the 
\end{lemma}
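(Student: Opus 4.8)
I want to prove that $\moins$ is compatible with $\plus$, that is, for all $q, q', q'' \in \monoidS$ with $q \moins q'$ we have both $q \plus q'' \moins q' \plus q''$ and $q'' \plus q \moins q'' \plus q'$. First I would dispense with the cases involving the special elements $e$ and $\infty$: if $q = e$ then $q \plus q'' = q''$, and we need $q'' \moins q' \plus q''$ and $q'' \moins q'' \plus q'$; since $q \moins q'$ with $q = e$ forces nothing special, I instead observe from the definition that $e \moins r$ holds for \emph{all} $r$, so the left comparison is immediate, and the right comparison $q'' \moins q'' \plus q'$ follows similarly once we know $e \moins q'$ — but actually the cleaner route is: whenever $\infty$ appears on the larger side or $e$ on the smaller side of the desired conclusion, the inequality holds trivially by definition of $\moins$. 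The only substantive case is $q, q', q'' \in S$ (all three ``genuine'' triples), with $q \moins q'$, and then also checking that no sum lands on $\infty$ — but $\plus$ never produces $\infty$ from elements of $S$, so the sums stay in $S$ and we must verify the three defining inequalities of $\moins$ componentwise.

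\textbf{Main case: left-compatibility $q \plus q'' \moins q' \plus q''$.} Write $q = \res[a]$, $q' = \res[b]$ with $\rbg[a] \geq \rbg[b]$, $(\cdo_a,\tdo_a) \moinslex (\cdo_b,\tdo_b)$, $(\cdt_a,\tdt_a)\moinslex(\cdt_b,\tdt_b)$, and let $q'' = \res[c]$. By definition $\rbg$ of both sums equals $\rbg[a]$ resp.~$\rbg[b]$, so the first required inequality $\tbg_{a\plus c}\geq \tbg_{b\plus c}$ is just $\tbg_a \geq \tbg_b$. For the ``$\dagger_1$'' component (first job not rescheduled): the sum's $(\cdo,\tdo)$ is either $(\cdo_a + \cdo_c,\ \tdo_c)$ if $\tdo_a \leq \tbg_c$, or $(\cdo_a + \cdt_c,\ \tdt_c)$ otherwise, and likewise with $b$. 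I would split on whether $\tdo_a \leq \tbg_c$ and whether $\tdo_b \leq \tbg_c$. Using $(\cdo_a,\tdo_a)\moinslex(\cdo_b,\tdo_b)$: if $\cdo_a < \cdo_b$ then in every branch the sum's first coordinate for $a$ is $\leq$ that for $b$ (adding the same $\cdo_c$ or $\cdt_c$, and noting $\cdo_c \leq \cdt_c$ from the defining constraints of $S$), strictly when the attached time-constant is the same, and when $a$ takes the ``$\leq$'' branch while $b$ takes the ``otherwise'' branch we get $\cdo_a + \cdo_c \leq \cdo_a + \cdt_c < \cdo_b + \cdt_c$; the reverse mismatch ($a$ otherwise, $b$ $\leq$) would need $\tdo_a > \tbg_c \geq \tdo_b$, contradicting $\tdo_a \leq \tdo_b$ when $\cdo_a = \cdo_b$, so it cannot create a problem. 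If $\cdo_a = \cdo_b$ then $\tdo_a \leq \tdo_b$, so $a$ and $b$ fall in the same branch, the first coordinates are equal, and the time coordinates ($\tdo_c$ or $\tdt_c$, the same one) are equal — giving $\moinslex$ with equality, which is fine. The ``$\dagger_2$'' component (first job rescheduled) is handled the very same way, now using $(\cdt_a,\tdt_a)\moinslex(\cdt_b,\tdt_b)$ and the branch condition $\tdt_a \leq \tbg_c$ vs.~$\tdt_b\leq\tbg_c$, again exploiting $\cdo_c\leq\cdt_c$ and $\tdo_c$ vs.~$\tdt_c$ only through the monoid's structural constraints in \eqref{eq:definitionMonoidSet}. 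Assembling the three componentwise facts gives $q\plus q'' \moins q' \plus q''$.

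\textbf{Right-compatibility $q'' \plus q \moins q'' \plus q'$.} Here $q''=\res[c]$ is on the left, so $\rbg$ of both sums is $\rbg[c]$ and the first inequality is an equality — trivially $\geq$. For the $\dagger_1$ component of $q''\plus q$: it is $(\cdo_c + \cdo_a,\ \tdo_a)$ if $\tdo_c \leq \tbg_a$, else $(\cdo_c + \cdt_a,\ \tdt_a)$. I would split on the sign of $\tdo_c - \tbg_a$ vs.~$\tdo_c - \tbg_b$; since $\tbg_a \geq \tbg_b$, if $\tdo_c \leq \tbg_b$ then also $\tdo_c\leq\tbg_a$, so both sums use the ``$\leq$'' branch, giving $(\cdo_c+\cdo_a,\tdo_a)$ vs.~$(\cdo_c+\cdo_b,\tdo_b)$, and $(\cdo_a,\tdo_a)\moinslex(\cdo_b,\tdo_b)$ yields the result. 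If $\tdo_c > \tbg_a \geq \tbg_b$, both use the ``otherwise'' branch, giving $(\cdo_c+\cdt_a,\tdt_a)$ vs.~$(\cdo_c+\cdt_b,\tdt_b)$, and now $(\cdt_a,\tdt_a)\moinslex(\cdt_b,\tdt_b)$ finishes it. The remaining case $\tbg_b \le \tdo_c \le \tbg_a$ is the only delicate one: $a$ is in the ``otherwise'' branch and $b$ in the ``$\leq$'' branch, so we compare $(\cdo_c+\cdt_a,\tdt_a)$ against $(\cdo_c+\cdo_b,\tdo_b)$, i.e.~we need $(\cdt_a,\tdt_a)\moinslex(\cdo_b,\tdo_b)$; this is where I expect to have to use the structural constraints of $S$ together with $(\cdo_a,\tdo_a)\moinslex(\cdo_b,\tdo_b)$ and $(\cdt_a,\tdt_a)\moinslex(\cdt_b,\tdt_b)$ and the relation between $\cdo_b,\cdt_b,\tdo_b,\tdt_b$ imposed by \eqref{eq:definitionMonoidSet} — indeed $\cdt_b \in \{\cdo_b,\cdo_b+1\}$ and the inequality $\tdt_a\leq\tbg_a$ type relations. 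The $\dagger_2$ component of the right sum is governed by the condition $\tdt_c \leq \tbg_{q}$ which does \emph{not} depend on $q$ (it only sees $\rbg$, common structure), wait — it depends on $\tbg_a$ vs.~$\tbg_b$ again, so the same three-way split applies, using $(\cdt_a,\tdt_a)\moinslex(\cdt_b,\tdt_b)$ in the easy sub-cases. I would collect these to conclude right-compatibility.

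\textbf{Expected main obstacle.} The bookkeeping is routine except for the ``branch mismatch'' sub-case (where the time constant falls strictly between $\tbg_b$ and $\tbg_a$, or between the two relevant $\tdo/\tdt$ values), because there the two sums are computed by different formulas and one cannot just subtract a common term. Handling it requires combining \emph{both} lexicographic hypotheses on $q\moins q'$ simultaneously with the defining implications of $S$ in \eqref{eq:definitionMonoidSet} — in particular that $\cdo$ and $\cdt$ differ by at most $1$ with the difference pinned down by the sign of $\tdo - \tdt$. I would isolate that as a short sub-lemma (``if $\res \in S$ and $\res[a] \in S$ with both lex inequalities, then $(\cdt_a,\tdt_a) \moinslex (\cdo, \tdo)$ whenever $\tdt_a \le \tbg \le \tbg_a$'') and verify it by a finite case check on the sign of $\tdo - \tdt$ (three cases, as in \eqref{eq:definitionMonoidSet}), after which the main argument goes through mechanically.
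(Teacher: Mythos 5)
Your overall strategy coincides with the paper's: dispose of $e$ and $\infty$, then verify the three defining inequalities of $\moins$ for each of the two sums by splitting on which branch of $\plus$ applies, using the structural constraints of $S$ in \eqref{eq:definitionMonoidSet} to resolve ties. However, the step you correctly single out as delicate is set up backwards, and the sub-lemma you propose to isolate is false as stated. In the right-compatibility case $q''\plus q \moins q''\plus q'$, write $q=\res[a]$, $q'=\res[b]$, $q''=\res[c]$; since $q\moins q'$ gives $\tbg_{a}\geq\tbg_{b}$, when $\tdo_{c}$ falls strictly between $\tbg_{b}$ and $\tbg_{a}$ it is $b$ (the \emph{larger} element) that lands in the ``otherwise'' branch and $a$ that lands in the ``$\leq$'' branch --- the opposite of what you wrote; your configuration ($a$ in ``otherwise'', $b$ in ``$\leq$'') would require $\tbg_{a}<\tdo_{c}\leq\tbg_{b}$, which contradicts $\tbg_{a}\geq\tbg_{b}$. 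The comparison to establish is therefore $(\cdo_{a},\tdo_{a})\moinslex(\cdt_{b},\tdt_{b})$, not $(\cdt_{a},\tdt_{a})\moinslex(\cdo_{b},\tdo_{b})$. The latter fails in general: already for $q=q'$ with $\cdt_{a}=\cdo_{a}+1$ (allowed by \eqref{eq:definitionMonoidSet} when $\tdt_{a}<\tdo_{a}$) the first coordinates violate it, so the sub-lemma you plan to prove cannot be proved. The correct inequality does hold, via $\cdo_{a}\leq\cdt_{a}\leq\cdt_{b}$ (the first from membership in $S$, the second from $q\moins q'$), with the tie $\cdo_{a}=\cdt_{a}=\cdt_{b}$ resolved by noting that $\cdo_{a}=\cdt_{a}$ forces $\tdo_{a}\leq\tdt_{a}$ and $\cdt_{a}=\cdt_{b}$ forces $\tdt_{a}\leq\tdt_{b}$; this is exactly how the paper argues.

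There is a smaller slip in your left-compatibility part: from $\cdo_{a}=\cdo_{b}$ and $\tdo_{a}\leq\tdo_{b}$ you conclude that $a$ and $b$ ``fall in the same branch'', but a mismatch $\tdo_{a}\leq\tbg_{c}<\tdo_{b}$ is still possible when $\tdo_{a}<\tdo_{b}$. The conclusion survives because in that mismatch one only needs $(\cdo_{c},\tdo_{c})\moinslex(\cdt_{c},\tdt_{c})$, which holds for every element of $S$ (if $\cdo_{c}=\cdt_{c}$ then $\tdo_{c}\leq\tdt_{c}$ by the contrapositive of the second implication in \eqref{eq:definitionMonoidSet}), but the case must be treated rather than dismissed. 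With these two corrections your argument reduces to the paper's proof; as written, it would fail at the delicate step.
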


\paragraph{}
We denote by $\meet_{lex}$ the lexicographical minimum on $\bbZ_+^{2}$.
Let $\meet$ be the binary operator on $\monoidS$ defined by,
% \begin{align*}
%     e \meet q = q \meet e &= e, \enskip \text{ for all }q \in \monoidS\\
%     \infty \meet q = q \meet \infty &= q, \enskip \text{ for all }q \in \monoidS\\
%     \res[a] \meet \res[b] &= \res
% \end{align*}
\begin{equation*}
\begin{array}{c}
    e \meet q = q \meet e = e \quad \text{and} \quad 
    \infty \meet q = q \meet \infty = q, \enskip \text{ for all }q \in \monoidS\\
    \text{ and }
    \res[a] \meet \res[b] = \res \enskip \text{where} \enskip \left\{
    \begin{array}{l}
    \heuredeb = \max(\heuredeb_{a},\heuredeb_{b}), \\
    (\heurefinUn,\coutUn)= (\heurefinUn_{a},\coutUn_{a}) \meet_{lex} (\heurefinUn_{b},\coutUn_{b}),\\
    (\heurefinNun,\coutNun)= (\heurefinNun_{a},\coutNun_{a}) \meet_{lex} (\heurefinNun_{b},\coutNun_{b}).
    \end{array}
    \right.
\end{array}
\end{equation*}
% \noindent for all $\res[a]$ and $\res[b]$ in $\monoidS$.

% where
%     $\heuredeb = \max(\heuredeb_{a},\heuredeb_{b})$, 
%     $(\heurefinUn,\coutUn)= (\heurefinUn_{a},\coutUn_{a}) \meet_{lex} (\heurefinUn_{b},\coutUn_{b})$, and $(\heurefinNun,\coutNun)= (\heurefinNun_{a},\coutNun_{a}) \meet_{lex} (\heurefinNun_{b},\coutNun_{b})$.
% \begin{align*}
%     \heuredeb = \max(\heuredeb_{a},\heuredeb_{b}), \enskip (\heurefinUn,\coutUn)= (\heurefinUn_{a},\coutUn_{a}) \meet_{lex}& (\heurefinUn_{b},\coutUn_{b}), \enskip \text{and } (\heurefinNun,\coutNun)= (\heurefinNun_{a},\coutNun_{a}) \meet_{lex} (\heurefinNun_{b},\coutNun_{b})
% \end{align*}
% \inlJP{Do you still keep the change of notation for the meet?}
\begin{lemma} \label{lemma:Meet}
$(\monoidS,\moins)$ is a lattice with meet operator $\meet$.
\end{lemma}
% \begin{proof}[Sketch of proof.]
The proof shows that $\monoidS$ is stable by the operator $\meet$, and that the latest defines a greatest lower bound for any pair $(q,q')$ of elements of $\monoidS$. See appendix for more details.
%Let $q_{a},q_{b} \in \monoidS$, and $m = q_{a} \meet q_{b}$.\\
%Let $x \in \monoidS$ such that $x\moins q_{a}$ and $x \moins q_{b}$.\\
%Let prove that $x\moins m$.\\
%If $q_{a} \in \{e,\infty\}$ or $q_{b}\in \{e,\infty\}$, by definition of %the order $\moins $ and of the meet, we directly have $x\moins m$. 
%Let's suppose $q_{a} \ni \{e,\infty\}$ and $q_{b}\ni \{e,\infty\}$. As  $x\moins q_{a}$, $\heuredeb_{x} \geqslant \heuredeb_{a}$, and as  
% \end{proof}
The following theorem is an immediate corollary of the four previous lemmas.
\begin{theo}
$(\monoidS, \plus, \moins)$ is a lattice ordered monoid with meet operator $\meet$.
\end{theo}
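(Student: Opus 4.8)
The statement asserts that $(\monoidS, \plus, \moins)$ is a lattice ordered monoid with meet operator $\meet$, and the excerpt already flags that this is ``an immediate corollary of the four previous lemmas.'' So the plan is simply to assemble those four lemmas according to the definition of a lattice ordered monoid stated just before Lemma~\ref{lemma:Monoid}. Recall that by definition $(M,\plus,\moins)$ is a lattice ordered monoid if (i) $(M,\plus)$ is a monoid, (ii) $(M,\moins)$ is a lattice, and (iii) $\moins$ is compatible with $\plus$.

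First I would invoke Lemma~\ref{lemma:Monoid}, which gives exactly condition (i): $(\monoidS,\plus)$ is a monoid, with $e$ the neutral element. Next, to get condition (ii), I would combine two facts: that $\moins$ is a partial order on $\monoidS$ (this is implicit in its being called a partial order in Lemma~\ref{lemma:OrderCompatible} and is verified alongside Lemma~\ref{lemma:Meet}), and Lemma~\ref{lemma:Meet}, which states that $(\monoidS,\moins)$ is a lattice with meet operator $\meet$. Being a lattice already includes the existence of the join $\join$ as the least upper bound of any pair; one can either cite the dual construction or note that in a bounded partially ordered set where all binary meets exist and the order has a top and bottom element, the join $q \join q'$ is obtained as the meet of all common upper bounds (here one can be explicit: with $e$ as bottom and $\infty$ as top, $q\join q'$ is well-defined). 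Finally, condition (iii) is precisely Lemma~\ref{lemma:OrderCompatible}: $\moins$ is compatible with $\plus$. Putting these three together gives that $(\monoidS,\plus,\moins)$ is a lattice ordered monoid, and since the meet is realized by $\meet$ (Lemma~\ref{lemma:Meet}), the ``with meet operator $\meet$'' clause holds.

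The only point requiring a sentence of care — and the closest thing to an ``obstacle'' in what is otherwise a bookkeeping argument — is making sure the ambient definition of ``lattice'' in the excerpt (``any pair $(q,q')$ of elements of $M$ admits a greatest lower bound \ldots and a least upper bound'') is fully matched: Lemma~\ref{lemma:Meet} is phrased only in terms of the meet $\meet$, so I would add a short remark that the existence of arbitrary binary joins follows, either because $\monoidS$ has a maximum $\infty$ (so the set of upper bounds of $\{q,q'\}$ is nonempty) and joins can be recovered from meets of upper bounds, or simply by exhibiting the dual explicit formula (replacing $\max$ by $\min$, $\meet_{lex}$ by the lexicographic maximum, and swapping the roles of $e$ and $\infty$). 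With that observation in place the four lemmas close the theorem with no further computation.
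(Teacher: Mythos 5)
Your proposal is correct and takes exactly the paper's route: the theorem is stated as an immediate corollary of Lemmas~\ref{lemma:Monoid}, \ref{lemma:OrderCompatible}, and~\ref{lemma:Meet}, matched against the three clauses of the definition of a lattice ordered monoid, and the paper gives no further argument. Your added remark about the join is a legitimate point the paper glosses over (Lemma~\ref{lemma:Meet} only supplies the meet), and your suggestion of exhibiting the dual explicit formula is the cleaner of your two fixes, since recovering $q \join q'$ as a meet of all common upper bounds would require more than binary meets.
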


% \inlAP{maybe define the resources $q_{\omega,a}$ on the arcs and put a proposition as follows.}

% \begin{proposition}
% Let $P$ be an $o$-$d$ path in $D$, let $\sh$ be the corresponding shift, and let $\res[\omega]$ be its
% \end{proposition}

%%%%%%%%%%%%%%%%%%%%%%%%%%%%%%%%%%%%%%%%%%%%%%%%%%%%%%%%%%%%%%%%%%%%%%%%%%%%%%%
\subsubsection{Full lattice ordered monoid}
\label{sub:full_lattice_ordered_monoid}
%%%%%%%%%%%%%%%%%%%%%%%%%%%%%%%%%%%%%%%%%%%%%%%%%%%%%%%%%%%%%%%%%%%%%%%%%%%%%%%
% \inlAP{Replace by $\monoidS^{\sharp \Oscenario} \times \bbR$}

% We have defined a monoid $\monoidS$ for a scenario $\oscenario$. To extend it to the set of scenario $\Oscenario$, we are going to define a resource for each scenario, and thus the expectancy of the number of jobs realized by backup agent is modeled by $\monoidS^{\sharp \Oscenario}$.
% \paragraph{}
To model the pricing subproblem, we use a resource in $\monoidS$ for each scenario $\omega$ in $\Omega$, and a resource $\lambda \in \bbR$ to model the wage costs and the reduced costs. %\inlJP{Rajoute t on les couts deterministiques ?}
% Finally, to take into account the reduced cost $\lambda_{\task}$ of each job in the shift, we consider the additional monoid $(\bbR,+, \leqslant)$. 
As $(\bbR,+, \leqslant)$ is a lattice ordered monoid with meet operator $\min$, the set $M=\monoidS^{ \Oscenario} \times \bbR$ endowed with the componentwise sum and order 
 is a lattice ordered monoid as a product of two lattice ordered monoids.

\subsubsection{Resources on the arcs}
% For each $\oscenario \in \Oscenario$ and for each job $\task \in \setTasks$, let be $h_{b}^{\task,\oscenario}$ and $h_{f}^{\task,\oscenario}$ the beginning and ending time of job $\task$ in scenario $\oscenario$. 
Let $a \in A$ be an arc of $D$. We define for each scenario $\oscenario \in \Oscenario$ a resource $q_{a}^{\oscenario}$ on the arc a. The resource $r_a$ of an arc $a$ is of the form $((q_{a}^{\oscenario})_{\oscenario \in \Oscenario},\lambda_a)$, with 
$$
q_a^{\omega} = \left\{
\begin{array}{ll}
% \res][0,\xe[j](\omega)][1,-\infty] 
    \left(\begin{array}{c}
            \mathsf{bg}\big(\xb[j](\omega)\big)\\
            \mathsf{do}_a(\omega)\\
            \mathsf{dt}\big(1,-\infty\big)
    \end{array}\right)
  & \text{ if }  a = \big((j,\hb,\el),\cdot\big),\\
e & \text{ otherwise. }
\end{array}
\right.
$$
where
$$
\mathsf{do}_a(\omega) = \left\{
\begin{array}{ll}
\mathsf{do}\big(1,-\infty\big) & \text{if }\vl[j](\omega) = 1 \\
\mathsf{do}\big(0,\xe[j](\omega)\big) & \text{if }\vl[j](\omega) = 0 \text{ and } 
    \left\{
    \begin{array}{rl}
    a =& \big((j,\hb,\el),(j',\hb,\el)\big),\\
    a=& \big((j,\hb,\bl),\he\big) \text{ and } \he < \tbl + \tml \\
    \text{or } a=& \big((j,\hb,\al),\he\big) 
    \end{array}
    \right. \\
\mathsf{do}\big(0,\xe[j](\omega)+\tbr\big) & \text{if }\vl[j](\omega) = 0 \text{ and } 
    \left\{
    \begin{array}{rl}
    a =& \big((j,\hb,\bl),(j',\hb,\al)\big), \\
    \text{or }a=& \big((j,\hb,\bl),\he\big) \text{ and } \he > \tbl + \tml. 
    \end{array}
    \right.
\end{array}
\right.
$$
\noindent and
$$ \lambda_a = \left\{
\begin{array}{ll}
    - \lambda_j & \text{if $a$ is of the form $\big((j,\hb,\cdot),(j',\hb,\cdot)\big)$}, \\
    \cw(\he - \hb) - \lambda_j & \text{if $a$ is of the form $\big((j,\hb,\cdot),\he\big)$}, \\
    0 & \text{otherwise.}
\end{array}
\right. $$

\begin{proposition}\label{prop:FinalCost}
Let $P$ be an $o$-$d$ path and $\sh$ the corresponding shift. Then $$\bigoplus_{a \in P}r_a= \left(
\left(\begin{array}{c}
            \mathsf{bg}\big(\tbg_{P}(\omega)\big)\\
            \mathsf{do}\big(\cdo_{P}(\omega),\tdo_{P}(\omega)\big)\\
            \mathsf{dt}\big(\cdt_{P}(\omega),\tdt_{P}(\omega)\big)
    \end{array}\right)_{\oscenario \in \Oscenario}
, \cw(\he[\sh] - \hb[\sh]) - \sum_{j \in \sh} \lambda_j \right) ,
$$
where 
$\tbg_{P}(\omega)$ is the time at which the first job of $\sh$ starts under $\omega$,
$\tdo_{P}(\omega)$ the time at which the last job of $\sh$ under $\omega$ ends if non-rescheduled, else has value $-\infty$,
$\tdt_{P}(\omega)$ the time at which the last job of $\sh$ under $\omega$ would have end is the first job of $\sh$ was rescheduled under $\omega$ and if the last job is non-rescheduled, else has value $-\infty$,
$\cdo_{P}(\omega)$ is the number of rescheduled jobs in $\sh$ under $\omega$,
and $\cdt_{P}(\omega)$ the number of jobs of $\sh$ that would have been rescheduled if the first job of $\sh$ was rescheduled under $\omega$.
\end{proposition}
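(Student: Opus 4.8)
The plan is to proceed by induction on the number of arcs of the $o$-$d$ path $P$, reading off the resource $\bigoplus_{a\in P} r_a$ one arc at a time and checking that the $\monoidS$-component for each scenario $\omega$ tracks exactly the five quantities $\tbg_P(\omega)$, $\cdo_P(\omega)$, $\tdo_P(\omega)$, $\cdt_P(\omega)$, $\tdt_P(\omega)$ claimed in the statement, while the real component accumulates the wage minus reduced cost. Since the monoid $M=\monoidS^{\Omega}\times\bbR$ is a product, the real component is immediate: only arcs of the form $\big((j,\hb,\cdot),\he\big)$ carry a term $\cw(\he-\hb)$, there is exactly one such arc on an $o$-$d$ path by Proposition~\ref{prop:bijectionDigraph}, and each vertex $(j,\hb,\cdot)$ visited contributes $-\lambda_j$, so the sum is $\cw(\he[\sh]-\hb[\sh])-\sum_{j\in\sh}\lambda_j$. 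Hence the work is entirely in the $\monoidS$-components, which can be treated scenario by scenario; fix $\omega$ and drop it from the notation.

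For the scenario component, I would set up the induction so that the statement being proved about a partial path $P$ ending at a vertex $v$ is exactly the semantic description of $\monoidS$ given in Section~\ref{sub:full_lattice_ordered_monoid}: $\tbg$ is the realized start time of the first job of the partial shift $\sh$; $(\cdo,\tdo)$ records the number of rescheduled jobs and the realized end time of the last non-rescheduled job under the hypothesis that the first job of $\sh$ is \emph{not} rescheduled; and $(\cdt,\tdt)$ records the same two quantities under the hypothesis that the first job \emph{is} rescheduled (with $\tdo$, resp.~$\tdt$, set to $-\infty$ when the last job is rescheduled in the corresponding hypothesis). The base case is a path consisting of a single arc out of $o$ into some $(j,\hb,\el)$: its resource is $q_a^\omega=\big(\mathsf{bg}(\xb[j](\omega)),\mathsf{do}_a(\omega),\mathsf{dt}(1,-\infty)\big)$, and one checks directly from the definition of $\mathsf{do}_a(\omega)$ that this encodes a one-job shift correctly — if $\vl[j](\omega)=1$ the single job is very late hence rescheduled under both hypotheses, giving $\cdo=\cdt=1$ and $\tdo=\tdt=-\infty$; if $\vl[j](\omega)=0$ then under the ``first job not rescheduled'' hypothesis nothing reschedules it ($\cdo=0$, $\tdo=\xe[j](\omega)$ possibly shifted by $\tbr$ to encode the forced break), while under the ``first job rescheduled'' hypothesis it is rescheduled by fiat ($\cdt=1$, $\tdt=-\infty$).

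The inductive step is the heart of the argument: given $P=P'+a$ where $a$ enters a new job vertex $(j',\hb,\el')$ (or the ending vertex $\he$), compute $q_{P'}\oplus q_a$ using the definition of $\oplus$ and verify it matches the semantics for $P$. The four cases of $\oplus$ are precisely the four cases of ``can the first job of the appended one-job shift be operated after the last non-rescheduled job of $\sh'$,'' under each of the two hypotheses on the first job of $\sh'$; so the condition $\heurefinUn_a\le\heuredeb_b$ (resp.\ $\tdt_a\le\tbg_b$) translates, via the induction hypothesis on $\tdo_{P'}$, $\tdt_{P'}$ and the value $\xb[j']$ stored in $q_a^\omega$, into ``the appended job is \emph{not} rescheduled by condition~\ref{case:succ}/\ref{case:break}.'' Here one must invoke Hypothesis~\ref{hyp:propagation} and the well-scheduledness of $\sh$ (via Proposition~\ref{prop:wellScheduled}), which together guarantee that whether the appended job reschedules depends only on the \emph{immediately} preceding activity and that the $\tbr$ offset correctly encodes condition~\ref{case:break}; this parallels the parity discussion (even/odd number of jobs, $\tdo\gtrless\tdt$) already sketched after \eqref{eq:definitionMonoidSet}. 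The main obstacle I anticipate is bookkeeping the interaction of the break arcs with the offsets: arcs of the form $\big((j,\hb,\bl),(j',\hb,\al)\big)$ and $\big((j,\hb,\bl),\he\big)$ with $\he>\tbl+\tml$ add $\tbr$ into the $\mathsf{do}$-component, and one must check that this is exactly compensated when the next job is appended so that the final end times $\tdo_P$, $\tdt_P$ are the honest realized end times and not inflated — i.e.\ that the ``$+\tbr$'' is consumed by the comparison at the very next $\oplus$ rather than propagating further. Once the single-arc extension is verified in all combinations of (break / no break) $\times$ (rescheduled / not, under each hypothesis), the induction closes and the proposition follows.
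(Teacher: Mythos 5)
Your plan follows the paper's proof essentially verbatim: the real component is read off directly from the product structure (one arc of the form $\big((j,\hb,\cdot),\he\big)$ per path, one $-\lambda_j$ per job), and the scenario components are established by induction on the number of arcs, with the inductive step matching the cases of $\oplus$ to the rescheduling conditions under each of the two hypotheses on the first job, exactly as in the paper. One small correction to carry into the write-up: the resource $q_a^{\omega}$ is attached to the arc \emph{leaving} the job vertex $(j,\hb,\el)$ (it is defined by the tail of $a$, not its head), so the arc out of $o$ carries the neutral element and the resource of a partial path ending at a job vertex encodes all of its jobs except the last one; your base case and induction invariant must be shifted by one arc accordingly, but this does not change the structure of the argument.
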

\begin{remark}
The condition $\mathsf{dt}\big(1,-\infty\big)$ and $\mathsf{do}\big(1,-\infty\big) \text{ if }\vl[j](\omega) = 1$ ensures that Hypothesis~\ref{hyp:propagation} is always satisfied, i.e.,~that a job operated after a rescheduled job is always realizable by the initial agent, since $-\infty \leqslant \mathsf{bg}\big(\xb[j'](\omega)\big)$, for every job $j'$ and scenario $\oscenario \in \Oscenario$.
\end{remark}

\subsubsection{Cost function}
Given $r =\big((q^{\oscenario})_{\oscenario \in\Oscenario},\lambda\big) \in \monoidS^{\Oscenario} \times \bbR$, we define
\begin{align*}
    c(r) = \frac{\cbu}{\sharp \Oscenario}\sum_{\oscenario \in \Oscenario}c_{\monoidS}(q^{\oscenario}) + \lambda, %\enskip \text{ and } \enskip \rho(r) =0
\end{align*}
where $c_{\monoidS}(e)= 0$, $c_{\monoidS}(\infty)= \infty$, and
$c_{\monoidS}\left(\res[\omega]\right)= \cdo_{\oscenario}$.\\
Finally, we define $\rho(r) = 0$ for all $r\in \monoidS^{ \Oscenario} \times \bbR$.
% $c_{\monoidS}\left((\heuredeb_{\oscenario},\heurefinUn_{\oscenario},\heurefinNun_{\oscenario},\coutUn_{\oscenario},\coutNun_{\oscenario})\right)=\alpha * \coutUn_{\oscenario}$.
% \begin{remark}
% With the choice of digraph $D$ in \ref{subsubsec:Digraph}, all o-d paths in $D$ represent feasible shifts, thus the infeasibility function is the constant function equals to zero.
% \end{remark}
% \subsubsection{Conclusion.}
The following theorem concludes the reduction of the pricing subproblem to the  \MRCSP.% \enskip framework to the pricing sub-problem \ref{PricingSubProblem}.  
\begin{theo} \label{prop:FinalProp}
There is a bijection between $o$-$d$ paths in $D$ and well-scheduled shifts. Furthermore, given an $o$-$d$ path $P$ and the corresponding shift $\sh$, we have
$$c\left(\bigoplus_{a \in P}r_a\right) = c_{\sh} -\sum_{j \in \sh} \lambda_j. $$
The sequence of job $\task_{1}, \task_{2}, \ldots, \task_{k}$ is a feasible shift if and only if the corresponding path $p$ is an o-d path in $D$ whose resource $q_{p}$ satisfies $\rho(q_{p})=0$. Furthermore, in that case, $c(q_{p}) = \costshift - \sum_{\task \in \shift} \lambda_{\task}$.
\end{theo}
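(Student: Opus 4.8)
The statement bundles together several facts, most of which are already available. First, the bijection between $o$-$d$ paths in $D$ and well-scheduled shifts is exactly Proposition~\ref{prop:bijectionDigraph}, so nothing new is needed there; I would simply invoke it and fix notation: given an $o$-$d$ path $P$, write $\sh$ for the corresponding well-scheduled shift, and recall that $j \in \sh$ iff $P$ intersects $V_j$. The bulk of the work is to compute $c\left(\bigoplus_{a\in P}r_a\right)$ and recognize it as $c_\sh - \sum_{j\in\sh}\lambda_j$. For this I would lean on Proposition~\ref{prop:FinalCost}, which already identifies the aggregated resource $\bigoplus_{a\in P}r_a$ componentwise: the $\bbR$-component is $\cw(\he[\sh]-\hb[\sh]) - \sum_{j\in\sh}\lambda_j$, and for each scenario $\omega$ the $\monoidS$-component has first coordinate of its $\mathsf{do}$ part equal to $\cdo_P(\omega)$, the number of rescheduled jobs of $\sh$ under $\omega$.

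**Main computation.** Plugging this into the definition of $c$, we get
\begin{align*}
c\left(\bigoplus_{a\in P}r_a\right) &= \frac{\cbu}{\sharp\Omega}\sum_{\omega\in\Omega} c_{\monoidS}(q^\omega) + \lambda
= \frac{\cbu}{\sharp\Omega}\sum_{\omega\in\Omega}\cdo_P(\omega) + \cw(\he[\sh]-\hb[\sh]) - \sum_{j\in\sh}\lambda_j.
\end{align*}
Now $\frac{1}{\sharp\Omega}\sum_{\omega}\cdo_P(\omega)$ is precisely the empirical expectation $\bbE(\nbu[\sh])$ of the number of rescheduled jobs under the scenario-based distribution, so the first two terms sum to $\cw(\he[\sh]-\hb[\sh]) + \bbE(\cbu\nbu[\sh]) = c_\sh$ by Equation~\eqref{eq:shiftCost}. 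This yields $c\left(\bigoplus_{a\in P}r_a\right) = c_\sh - \sum_{j\in\sh}\lambda_j$, as claimed. The only subtlety is to make sure that the number of rescheduled jobs that Proposition~\ref{prop:FinalCost} tracks ($\cdo_P(\omega)$, rescheduling under conditions~\ref{case:succ}, \ref{case:break}, \ref{case:late}) genuinely coincides with $\nbu[\sh](\omega)$ from Section~\ref{sub:costs}; this is exactly what Hypothesis~\ref{hyp:propagation} is engineered to guarantee, and the accompanying remark on $\mathsf{dt}(1,-\infty)$, $\mathsf{do}(1,-\infty)$ records that very-late jobs do not propagate, so the monoid's bookkeeping matches the recursive rescheduling rules.

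**Feasibility statement.** For the last two sentences I would argue that every $o$-$d$ path is feasible in the \MRCSP\ sense: by definition $\rho \equiv 0$ on all of $\monoidS^\Omega\times\bbR$, so $\rho(q_P)=0$ automatically. Conversely, a sequence of jobs $\task_1,\dots,\task_k$ forms a feasible shift iff it satisfies Definition~\ref{def:shifts} together with rules~\ref{rule:Amplitude} and~\ref{rule:LunchBreak}; the proof of Proposition~\ref{prop:bijectionDigraph} (appendix) establishes that these are exactly the sequences realized by $o$-$d$ paths in $D$, after passing to the well-scheduled representative guaranteed by Proposition~\ref{prop:wellScheduled}. Combining with the cost identity just proved gives $c(q_p) = \costshift - \sum_{\task\in\shift}\lambda_\task$ in that case.

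**Expected main obstacle.** The routine parts are the substitution into $c$ and the observation $\rho\equiv 0$. The genuine content — and the step I would be most careful about — is justifying that $\cdo_P(\omega)$ as produced by the arc resources and the monoid operation $\plus$ really equals the count $\nbu[\sh](\omega)$ defined by the online rescheduling rules. This reduces to checking that the associative folding $\bigoplus_{a\in P}r_a$ correctly implements the "if the first job of a suffix is (not) rescheduled, then rules~\ref{case:succ}--\ref{case:late} determine the rest" dichotomy on which the monoid semantics was built; most of this is delegated to Proposition~\ref{prop:FinalCost} and the structural lemmas (Lemmas~\ref{lemma:Monoid}--\ref{lemma:Meet}), so the proof of the theorem itself should be short, essentially an assembly of those pieces plus the arithmetic identifying the empirical mean with $\bbE(\nbu[\sh])$.
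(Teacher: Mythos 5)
Your proposal is correct and follows essentially the same route as the paper's own proof: invoke Proposition~\ref{prop:bijectionDigraph} for the bijection, Proposition~\ref{prop:FinalCost} for the componentwise identification of $\bigoplus_{a\in P}r_a$, and then substitute into the definition of $c$ to recover $c_\sh-\sum_{j\in\sh}\lambda_j$. If anything, you are slightly more explicit than the paper in identifying the empirical mean of $\cdo_P(\omega)$ with $\bbE(\nbu[\sh])$ and in noting that $\rho\equiv 0$ makes the feasibility clause automatic, both of which the paper's proof leaves implicit.
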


\bigskip\bigskip\bigskip\bigskip

\subsubsection*{Acknowledgments}
\label{ssub:acknowledgments}
\paragraph{
We are grateful to Air France which partially
supported the project. We are especially thankful to Isabel Gomez, Beno\^it Robillard, and Blaise Brigaud, who initiated the
project, and to Pierre de Fr\'eminville who helped us with the implementation in Air France softwares.}

% \inlJP{feasibility function jamais definie et inutile dans notre cas. Et un peu redondante non -> bijection et iff}

\include{conclu}

\bibliographystyle{plainnat}
\bibliography{ground}
 
%!TEX ROOT=main.tex

\begin{appendix}{\textbf{Proofs}}%\subsection{Proofs}

\begin{proof}[Proof of Proposition~\ref{prop:wellScheduled}.]
Let $\sh$ be a feasible shift containing breaks, and $\sh'$ be the shift obtained by removing all its breaks but one, denoted $j$, and replacing $j$ by the break $[t - \tbr, t]$, where
$$t = 
\left\{
\begin{array}{ll}
\min(\te[\sh],\tel) & \text{if $j$ is the last activity of $\sh$,} \\
\min(\tb[j'],\tel) & \text{otherwise, where $j'$ is the activity after $j$ in $\sh$.}
\end{array}
\right.
 $$
 Then $\sh'$ is a well-scheduled shift containing the same jobs as $\sh$. 
 Furthermore, $\sh$ and $\sh'$ have the same beginning and ending time.
 Removing a break from a shift can only decrease the number of rescheduled job. 
 % \inlAP{Same Proof as definition of Monoid Set. 3 elements in the proof: . 
 % Refer to the appropriate proof done for the monoid.}
Indeed, suppose that we remove a break $\br$ between jobs $j_i$ and $j_{i+1}$.
If $j_i$ is rescheduled or if we do not have $\xe[j_i] \leq \xb[j_{i+1}] < \xe[j_i] + \tbr$, then removing $\br$ does not change the rescheduled jobs. 
Suppose that $j_i$ is not rescheduled and $\xe[j_i] \leq \xb[j_{i+1}] < \xe[j_i] + \tbr$, and let $j_{i+1}, \ldots, j_{i+m'}$ be the jobs after $j_i$ in $\sh$. 
Let $m$ be equal to the smallest integer in $[k-1]$ such that either there is no break between $j_{i+m}$ and $j_{i+m+1}$ and $\xe[j_{i+m}] \leq \xb[j_{i+m+1}]$, or there is a break between $j_{i+m}$ and $j_{i+m+1}$ and $\xe[j_{i+m}] \leq \xb[j_{i+m+1}] - \tbr$, and to $m'$ otherwise.
Then only the rescheduling of jobs $j_{i+k}$ with $k$ in $[m]$ is affected when $\br$ is removed.
Indeed, jobs $j_{i+k}$ with $k$ odd in $[m]$ are rescheduled if and only if $\br$ is not removed, and jobs $j_{i+k}$ with $k$ even in $[m]$ are rescheduled if and only $\br$ is removed.
Hence, removing $\br$ decreases the number of rescheduled job by one if $m$ is odd, and does not change the number of rescheduled jobs if $m$ is even.
% , and $j_{i+k}$ with $k$ even in $[m]$ are rescheduled if the break between $j_{i}$ and $j_{i+1}$ is removed.
% Rescheduled jobs in $\sh$ change only if $j_i$ is not rescheduled and $\xe[j_i] \leq \xb[j_{i=1}] < \xe[j_i] + \tbr$. Then, if 
% If
%   \inlJP{Je ne vois pas de quoi tu parles?}
Hence, $\sh'$ has the same beginning and ending times as $\sh$, and at most as many rescheduled jobs, which gives $c_{\sh'} \leq c_{\sh}$.
\end{proof}

\begin{proof}[Proof of Proposition~\ref{prop:bijectionDigraph}.]
First we are going to prove that every $o$-$d$ path in $D$ represents a shift as defined in Definition~\ref{def:shifts}.
Let $P$ be an $o$-$d$ path in $D$. Path $P$ is defined by an ordered sequence of vertices of the form $\{o\}$, $\big\{(j^i,\hb,e)$ for $i$ in $\{1,\ldots,k\},e \in\{\bl,\al\} \text{ and some } \hb \text{ in } \calHb \big\}$, $\{\he\}$ and $\{d\}$, where $\he \in \calHf$. 
As any path intersects a unique $V_{\hb}$, the component $\hb$ in $\calHb$ is identical for all the vertices in $P$. 
$P$ thus represents the sequence $\tb[\sh],\task^{1},\task^{2}, \ldots, \task^{k}, \te[\sh]$, with $\tb[\sh] = \hb$ and $\te[\sh]=\he$. In addition we add to that sequence a break denoted $\task_{b}$ in the two following cases:
\begin{itemize}
    \item if there exists an edge of the form $\big((\task^{i},\hb,\bl), (\task^{i+1},\hb,\al)\big)$, we add it between $\task^{i}$ and $\task^{i+1}$, and define $\te[j_{b}] = \min(\tb[j^{i+1}], \tel)$ and $\tb[j_{b}] = \te[j_{b}] - \tbr$.
    \item if there exists an edge of the form $\big((\task^{k},\hb,\bl),\he\big)$ with $\he \geq \tbl + \tml$, we add it after $\task^{k}$, and define $\te[j_{b}] = \min(\te[\sh], \tel)$ and $\tb[j_{b}] = \te[j_{b}] - \tbr$.
\end{itemize} %\inlJP{tentative of adding break as an activity. Does that satisfy you Axel?} 
The pair composed of two successive vertices $(\task^{i},\hb,e)$ and $(\task^{i+1},\hb,e)$ in $P$ is an arc of $D$. The definition of arcs in $D$ ensures both $\hbshift \leq\tb[\task^{i}] \text{ for all $i$ in } \{1,\ldots,k\}$ and $\te[j^i] \leq \tb[j^{i+1}] \text{ for all $i$ in } \{1,\ldots,k-1\}$. 
Furthermore the existence of arc $\big((j^k,\hb,\el), \he \big)$, where $\el \in\{\bl,\al\}$, proves that $\te[j^{k}] \leq \hfshift$.
Let us now consider the case where a lunch break $\task_{b}$ has been added to the sequence. If $\task_{b}$ has been added between two other jobs $\task^{i}$ and $\task^{i+1}$, it means that the edge between vertices $(\task^{i},\hb,\bl)$ and $(\task^{i+1},\hb,\al)$ exists and thus we have $\te[j^{i}] \leqslant \tb[j_{b}]$ and $\te[j_{b}] \leqslant \te[j_{i+1}]$, considering how we define $\tb[j_{b}]$ and $\te[j_{b}]$. In addition all the other inequalities still hold. If $\task_{b}$ has been added after the last job $\task^{k}$ of the sequence, which ensures that the edge $\big((\task^{k},\hb,\bl),\te[\sh]\big)$ exits. According to how we choose the beginning and ending time of such a break, since the previous edge exists, we have $\te[j^{k}] \leqslant \tb[j_{b}]$ and $\te[j_{b}] \leqslant \te[\sh]$. In addition all the other inequalities still hold.
The sequence $\tb[\sh],\task^{1},\task^{2}, \ldots, \task^{k}, \te[\sh]$, with eventually a break added, represents thus a shift $\sh$ according to Definition~\ref{def:shifts}. \\
Second we are going to show that every $o$-$d$ path in $D$ represents a feasible shift according to rules~\ref{rule:Amplitude} and~\ref{rule:LunchBreak}. 
Rule~\ref{rule:Amplitude} is guaranteed by the fact that edges of type $\big((j,\hb,\el),\he\big)$, where $\el \in\{\bl,\al\}$, exist if and only if $\he \leq \hb + \tm$. 
Rule~\ref{rule:LunchBreak} is guaranteed by the use of $\bl$ and $\al$ which corresponds for each beginning time $\hb$ to respectively the jobs operated before the lunch break and after. Thus if $|[b_{lunch}, f_{lunch}] \cap [\hbshift,\hfshift]| \geq \tml$, that means that either an edge of the form $\big((\task^{i},\hb,\bl), (\task^{i+1},\hb,\al)\big)$ exists, or the last edge of path $P$ is of the form $\big((\task^{k},\hb,\bl),\te[\sh]\big)$, and we have $\te[\sh] \geq \tbl + \tml$. As both cases lead to the insertion of a lunch break in the sequence, and as we have already mentioned, an edge between a $\bl$ element and an $\al$ element exists only if there is at least time $\tbr$ between the two sequencing jobs or between the last job and the end of the shift, \ref{rule:LunchBreak} is respected.\\
Finally we are going to prove that every $o$-$d$ path in $D$ corresponds to a unique feasible well-scheduled shift as defined in Part~\ref{sub:well_scheduled_breaks}. If the feasible shift $\sh$ doesn't contain a break, then it is a well scheduled shift. Let us focus on the case where at least a break $j_{b}$ has been introduced in the sequence of jobs. Regarding how we defined its beginning and ending times, they respect the definition of well-scheduled break. In addition the shift built contains at most one lunch break as there is at most one edge between a $\bl$ element and an $\al$ element, or with the end of the shift, and is thus a well-scheduled shift. Finally, since a shift is uniquely defined by its sequence of jobs, including the lunch break, and its beginning and ending times, the uniqueness is immediately proved.\\
Let us show that every feasible well-scheduled shift $\sh$ represented by the following sequence $\tb[\sh],\task^{1},\task^{2}, \ldots, \task^{k}, \te[\sh]$ is associated to a unique $o$-$d$ path $P$ in $D$, defined by the following rule. First set $\hb = \tb[\sh]$ and $\he = \te[\sh]$, and add to path $P$ the vertex $(\he)$. Then browse the sequence in increasing order and for every job $\task^{i}$ different from a break: 
\begin{itemize}
    \item if $\task^{i} \in \Jbl \cap \Jal$, if we had already encounter a break, then add to the path $P$ the vertex $(\task^{i},\hb,\al)$, if not, the vertex $(\task^{i},\hb,\bl)$.
    \item else if $\task^{i} \in \Jbl$, add to the path $P$ the vertex $(\task^{i},\hb,\bl)$
    \item else if $\task^{i} \in \Jal$, add to the path $P$ the vertex $(\task^{i},\hb,\al)$
\end{itemize} 
Using the same reasoning as the one we used to show that an $o$-$d$ path represents a well-scheduled feasible shift, we can show that every pair of two successive vertices of $P$ is an arc, and thus prove that $P$ is an $o$-$d$ path in $D$. As a path in $D$ is uniquely represented by its succession of vertices, then the uniqueness of $P$ is also proved, which concludes the proof.
% \inlAP{Define this shift}
% \inlJP{path now defined. Is that enough?}
\end{proof}

\medbreak

Proposition~\ref{prop:MRCSPalgoConvergence} is proved in  \citet{2017arXiv170606901P}.
% \begin{proof}{Proof of proposition~\ref{prop:MRCSPalgoConvergence}.}
% See proof proposed in \citet{2017arXiv170606901P}.
% \end{proof}

%\smallbreak
\medbreak

\begin{proof}[Proof of lemma~\ref{lemma:Monoid}.] \label{proof:IsaMonoid}
For the purpose of this proof we introduce the following equations.
\[
    \tdo < \tdt \enskip \Rightarrow \cdo = \cdt \label{MonoidP1} \tag{P1}
\]
\[
\tdt < \tdo \enskip \Rightarrow \cdo = \cdt - 1 \label{MonoidP2} \tag{P2}
\]
\[
    \tdo = \tdt \enskip \Rightarrow \cdo = \cdt \text{ or } \cdo = \cdt -1 \label{MonoidP3} \tag{P3}
\]
Let us prove that $\monoidS$ is stable by $\plus$.
Let $q_{a},q_{b} \in \monoidS$, and $x = q_{a} \plus q_{b}$.
We prove the result by disjunction of cases.
If $q_{a} \in \{e,\infty\}$ or $q_{b} \in  \{e,\infty\}$, then by definition of $\plus$, the result is immediate.\\
Suppose now that $q_{a} = \res[a]$, $q_{b} = \res[b]$ and $x = \res$. 
\begin{itemize}
    \item Let us prove that $x$ satisfies \eqref{MonoidP1}. Let us assume that we have $\tdo < \tdt\enskip (H)$. We deduce that $\tdo_{b} \not = \tdt_{b}$
\begin{itemize}
	\item[+] if $\tdo_{b} < \tdt_{b} \enskip (h)$, we have 
	\begin{align*}
	\tdo &= \tdo_{b} \enskip \text{ and thus } \tdo_{b} \leq \tbg_{b} \\
	\tdt &= \tdt_{b} \enskip \text{ and thus } \tdt_{a} > \tbg_{b}\\
	\cdo &= \cdo_{a} + \cdo_{b}\\
	\cdt &= \cdt_{a} +\cdt_{b}\\
	\end{align*}
	With the two first inequalities, we deduce that $\tdo_{a} < \tdt_{a}$. With that inequality and $(h)$, as $q_{a}, q_{b} \in \monoidS$, with property \eqref{MonoidP1} we have : $\cdo_{a} = \cdt_{a}$ and  $\cdo_{b} = \cdt_{b}$.\\
	Thus : $\boxed{\cdo = \cdt}$.
	\item[+] else $\tdo_{b} > \tdt_{b} \enskip (h)$, and we have
	\begin{align*}
	\tdo &= \tdt_{b} \enskip \text{ and thus } \tdo_{a} > \tbg_{b} \\
	\tdt &= \tdo_{b} \enskip \text{ and thus } \tdt_{a} \leq \tbg_{b}\\
	\cdo &= \cdo_{a} + \cdt_{b}\\
	\cdt &= \cdt_{a} + \cdo_{b}\\
	\end{align*}
	With the two first inequalities, we deduce that $\tdt_{a} < \tdo_{a}$. With that inequality and $(h)$, as $q_{a}, q_{b} \in \monoidS$, with property \eqref{MonoidP2} we have : $\cdo_{a} = \cdt_{a} -1 $ and  $\cdo_{b} = \cdt_{b} -1$.\\
	Thus : $\boxed{\cdo = \cdt}$.
\end{itemize}
    \item Let us prove that $x$ satisfies \eqref{MonoidP2}. Let us assume that we have $\tdt < \tdo$. Thus we can already deduce that $\tdo_{b} \not = \tdt_{b}$
\begin{itemize}
	\item[+] if $\tdo_{b} < \tdt_{b} \enskip (h)$, we have 
	\begin{align*}
	\tdo &= \tdt_{b} \enskip \text{ and thus } \tdo_{a} > \tbg_{b}\\
	\tdt &= \tdo_{b} \enskip \text{ and thus } \tdt_{a} \leq \tbg_{b} \\
	\cdo &= \cdo_{a} + \cdo_{b}\\
	\cdt &= \cdt_{a} + \cdo_{b}\\
	\end{align*}
	With the two first inequalities, we deduce that $\tdt_{a} < \tdo_{a}$. As $q_{a} \in \monoidS$, with property \eqref{MonoidP2} we have : $\cdo_{a} = \cdt_{a} -1$. With $(h)$, as $q_{b} \in \monoidS$, with \eqref{MonoidP1} we have: $\cdo_{b} = \cdt_{b}$.\\
	Thus : $\boxed{\cdo = \cdt-1}$.
	\item[+] else $\tdo_{b} > \tdt_{b} \enskip (h)$, and we have
	\begin{align*}
    \tdo &= \tdo_{b} \enskip \text{ and thus } \tdo_{a} \leq \tbg_{b} \\
	\tdt &= \tdt_{b} \enskip \text{ and thus } \tdt_{a} > \tbg_{b}\\
	\cdo &= \cdo_{a} + \cdo_{b}\\
	\cdt &= \cdt_{a} +\cdt_{b}\\
	\end{align*}
	With the two first inequalities, we deduce that$\tdo_{a} < \tdt_{a}$. As $q_{a} \in \monoidS$, with property \eqref{MonoidP1} we have:$\cdo_{a} = \cdt_{a}$.With $(h)$, as $q_{b} \in \monoidS$, with \eqref{MonoidP2} we have: $\cdo_{b} = \cdt_{b} -1$.\\
	Thus : $\boxed{\cdo = \cdt-1}$.
\end{itemize}
    \item Let us prove that $x$ satisfies \eqref{MonoidP3}. Assume that we have $\tdt = \tdo$.
    \begin{itemize}
        \item[+] if  $\tdt = \tdo = \tdo_{b}$, we have
        \begin{align*}
        \tdo_{a} &\leq \tbg_{b} \enskip \text{ and thus } \cdo = \cdo_{a} + \cdo_{b}\\
    	\tdt_{a}  & \leq \tbg_{b} \enskip \text{ and thus } \cdt = \cdt_{a} +\cdo_{b}\\
    	\end{align*}
    	As $q_{a} \in \monoidS$, $q_{a}$ satisfies \eqref{MonoidP3}, and thus $\boxed{x \text{ satisfies \eqref{MonoidP3}}}$.
    	\item[+] else $\tdt = \tdo = \tdt_{b}$, and we have
    	\begin{align*}
        \tdo_{a} &> \tbg_{b} \enskip \text{ and thus } \cdo = \cdo_{a} + \cdt_{b}\\
    	\tdt_{a}  & > \tbg_{b} \enskip \text{ and thus } \cdt = \cdt_{a} +\cdt_{b}\\
    	\end{align*}
    	As $q_{a} \in \monoidS$, $q_{a}$ satisfies \eqref{MonoidP3}, and thus $\boxed{x \text{ satisfies \eqref{MonoidP3}}}$.
    \end{itemize}
\end{itemize}
As $x$ satisfies \eqref{MonoidP1}, \eqref{MonoidP2} and \eqref{MonoidP3}, $x \in \monoidS$ and this concludes the stability of $\plus$.
\\
We now prove the associativity of $\plus$.\\
Let $q_{a},q_{b},q_{c} \in \monoidS$, and $q_{t} = q_{a} \plus (q_{b} \plus q_{c})$, $q_{u} = (q_{a} \plus q_{b}) \plus q_{c}$.\\
If $q_{a} \in \{e,\infty\}$ or $q_{b} \in  \{e,\infty\}$ or $q_{v} \in  \{e,\infty\}$, then by definition of $\plus$, result is immediate.\\
Else, let $q_{a} = \res[a]$, $q_{b} = \res[b]$, $q_{c} = \res[c]$, $q_{t} = \res[t]$, $q_{u} = \res[u]$.\\
By defining $q_{f} = q_{b} \plus q_{c} = \res[f]$, we have $q_{t} = q_{a} \plus \res[f]$, with $\tbg_{f} = \tbg_{b}$ and
\[\tdo_{f} = 
\left\{ \begin{array}{l l}
  \tdo_{c} & \quad \text{if } \tdo_{b} \leq \tbg_{c}\\
  \tdt_{c} & \quad \text{otherwise}\\ \end{array} \right.
 \text{ , } \tdt_{f} = 
 \left\{ \begin{array}{l l}
 \tdo_{c} & \quad \text{if } \tdt_{b} \leq\tbg_{c}\\
  \tdt_{c} & \quad \text{otherwise}\\ \end{array} \right.\]

  \[\cdo_{f} = 
\left\{ \begin{array}{l l}
  \cdo_{b} +\cdo_{c} & \quad \text{if } \tdo_{b} \leq \tbg_{c}\\
  \cdo_{b} + \cdt_{c}  & \quad \text{otherwise}\\ \end{array} \right.
 \text{ , } \cdt_{f} = 
 \left\{ \begin{array}{l l}
  \cdt_{b} +\cdo_{c} & \quad \text{if } \tdt_{b} \leq\tbg_{c}\\
  \cdt_{b} + \cdt_{c} & \quad \text{otherwise}\\ \end{array} \right.\]

Hence $\tbg_{t} = \tbg_{a}$ and
\[\tdo_{t} = 
\left\{ \begin{array}{l l}
  \tdo_{f} & \quad \text{if } \tdo_{a} \leq \tbg_{b}\\
  \tdt_{f} & \quad \text{otherwise}\\ \end{array} \right.
 \text{ , } \tdt_{t} = 
 \left\{ \begin{array}{l l}
  \tdo_{f} & \quad \text{if } \tdt_{a} \leq \tbg_{b}\\
  \tdt_{f} & \quad \text{otherwise}\\ \end{array} \right.\]

  \[\cdo_{t} = 
\left\{ \begin{array}{l l}
  \cdo_{a} + \cdo_{f} & \quad \text{if }  \tdo_{a} \leq \tbg_{b}\\
  \cdo_{a} + \cdt_{f}  & \quad \text{otherwise}\\ \end{array} \right.
 \text{ , } \cdt_{t} = 
 \left\{ \begin{array}{l l}
  \cdt_{a} + \cdo_{f} & \quad \text{if }\tdt_{a} \leq \tbg_{b}\\
  \cdt_{a} + \cdt_{f} & \quad \text{otherwise}\\ \end{array} \right.\]
 \\
 By defining $q_{a} \plus q_{b} = \res[g]$, we have $q_{u} = \res[g] \plus q_{c}$, with $\tbg_{g} = \tbg_{a}$ and
\[\tdo_{g} = 
\left\{ \begin{array}{l l}
  \tdo_{b} & \quad \text{if } \tdo_{a} \leq \tbg_{b}\\
  \tdt_{b} & \quad \text{otherwise}\\ \end{array} \right.
 \text{ , } \tdt_{g} = 
 \left\{ \begin{array}{l l}
  \tdo_{b} & \quad \text{if } \tdt_{a} \leq \tbg_{b}\\
  \tdt_{b} & \quad \text{otherwise}\\ \end{array} \right.\]

  \[\cdo_{g} = 
\left\{ \begin{array}{l l}
  \cdo_{a} + \cdo_{b} & \quad \text{if } \tdo_{a} \leq \tbg_{b}\\
  \cdo_{a} + \cdt_{b}  & \quad \text{otherwise}\\ \end{array} \right.
 \text{ , } \cdt_{g} = 
 \left\{ \begin{array}{l l}
  \cdt_{a} + \cdo_{b} & \quad \text{if } \tdt_{a} \leq \tbg_{b}\\
  \cdt_{a} + \cdt_{b} & \quad \text{otherwise}\\ \end{array} \right.\]

Hence $\tbg_{u} = \tbg_{a}$, and
\[\tdo_{u} = 
\left\{ \begin{array}{l l}
  \tdo_{c} & \quad \text{if } \tdo_{g} \leq \tbg_{c}\\
  \tdt_{c} & \quad \text{otherwise}\\ \end{array} \right.
 \text{ , } \tdt_{u} = 
 \left\{ \begin{array}{l l}
  \tdo_{c} & \quad \text{if } \tdt_{g} \tbg_{c}\\
  \tdt_{c} & \quad \text{otherwise}\\ \end{array} \right.\]

  \[\cdo_{u} = 
\left\{ \begin{array}{l l}
  \cdo_{g} + \cdo_{c} & \quad \text{if } \tdo_{g} \leq \tbg_{c}\\
  \cdo_{g} + \cdt_{c}  & \quad \text{otherwise}\\ \end{array} \right.
 \text{ , } \cdt_{u} = 
 \left\{ \begin{array}{l l}
  \cdt_{g} +\cdo_{c} & \quad \text{if } \tdt_{g} \leq \tbg_{c}\\
  \cdt_{g} + \cdt_{c} & \quad \text{otherwise}\\ \end{array} \right.\]

\bigbreak

\begin{itemize}
    \item From these equations, we deduce $\rbg[t]=\rbg[u]$.
    \item We now show that: $\rdo[t] = \rdo[u]$:
    \begin{itemize}
	\item[+] if $\tdo_{t} = \tdo_{f}$, then $\tdo_{a}\leq \tbg_{b}$,\\
	 and we have : $\cdo_{t}=\cdo_{a}+\cdo_{f}$, $\tdo_{g}=\tdo_{b}$, and $\cdo_{g}=\cdo_{a}+\cdo_{b}$.
	\begin{itemize}
		\item[+] if $\tdo_{f}=\tdo_{c}$, then $\tdo_{b}\leq \tbg_{c}$,\\
		 and so $\cdo_{f}=\cdo_{b}+\cdo_{c}$. Moreover $\tdo_{g}\leq \tbg_{c}$, thus $\boxed{\tdo_{u}=\tdo_{c}=\tdo_{t}}$, and $\cdo_{u}=\cdo_{g}+\cdo_{c}$, thus $\boxed{\cdo_{u}=\cdo_{a}+\cdo_{b}+\cdo_{c}=\cdo_{t}}$
		\item[+] else $\tdo_{f}=\tdt_{c}$, then $\tdo_{b}> \tbg_{c}$,\\
		 and so $\cdo_{f}=\cdo_{b}+\cdt_{c}$. Moreover $\tdo_{g}> \tbg_{c}$, thus $\boxed{\tdo_{u}=\tdt_{c}=\tdo_{t}}$, and $\cdo_{u}=\cdo_{g}+\cdt_{c}$, thus $\boxed{\cdo_{u}=\cdo_{a}+\cdo_{b}+\cdt_{c}=\cdo_{t}}$
	\end{itemize}
	\item[+] else $\tdo_{t} = \tdt_{f}$, then $\tdo_{a}> \tbg_{b}$,\\
	 and we have : $\cdo_{t}=\cdo_{a}+\cdt_{f}$, $\tdo_{g}=\tdt_{b}$, and $\cdo_{g}=\cdo_{a}+\cdt_{b}$.
	 \begin{itemize}
		\item[+] if $\tdt_{f}=\tdo_{c}$, then $\tdt_{b}\leq \tbg_{c}$,\\
		 and so $\cdo_{f}=\cdt_{b}+\cdo_{c}$. Moreover $\tdo_{g}\leq \tbg_{c}$, thus $\boxed{\tdo_{u}=\tdo_{c}=\tdo_{t}}$, and $\cdo_{u}=\cdo_{g}+\cdo_{c}$, thus $\boxed{\cdo_{u}=\cdo_{a}+\cdt_{b}+\cdo_{c}=\cdo_{t}}$
		\item[+] else $\tdt_{f}=\tdt_{c}$, then $\tdt_{b}> \tbg_{c}$,\\
		 and so $\cdt_{f}=\cdt_{b}+\cdt_{c}$. Moreover $\tdt_{g}> \tbg_{c}$, thus $\boxed{\tdo_{u}=\tdt_{c}=\tdo_{t}}$, and $\cdo_{u}=\cdo_{g}+\cdt_{c}$, thus $\boxed{\cdo_{u}=\cdo_{a}+\cdt_{b}+\cdt_{c}=\cdo_{t}}$
	\end{itemize}
    \end{itemize}
    \item We finally show that: $\rdt[t] = \rdt[u]$:
    \begin{itemize}
	\item[+] if $\tdt_{t} = \tdo_{f}$, then $\tdt_{a}\leq\tbg_{b}$,\\
	 and we have : $\cdt_{t}=\cdt_{a}+\cdo_{f}$, $\tdt_{g}=\tdo_{b}$, and $\cdt_{g}=\cdt_{a}+\cdo_{b}$.
	\begin{itemize}
		\item[+] if $\tdo_{f}=\tdo_{c}$, then $\tdo_{b}\leq\tbg_{b}$,\\
		 and so $\cdo_{f}=\cdo_{b}+\cdo_{c}$. Moreover $\tdo_{g}\leq\tbg_{b}$, thus $\boxed{\tdt_{u}=\tdo_{c}=\tdt_{t}}$, and $\cdt_{u}=\cdo_{g}+\cdo_{c}$, thus $\boxed{\cdt_{u}=\cdt_{a}+\cdo_{b}+\cdo_{c}=\cdt_{t}}$
		\item[+] else $\tdo_{f}=\tdt_{c}$, then $\tdo_{b}>\tbg_{b}$,\\
		 and so $\cdo_{f}=\cdo_{b}+\cdt_{c}$. Moreover $\tdo_{g}>\tbg_{b}$, thus $\boxed{\tdt_{u}=\tdt_{c}=\tdt_{t}}$, and $\cdo_{u}=\cdo_{g}+\cdt_{c}$, thus $\boxed{\cdt_{u}=\cdt_{a}+\cdo_{b}+\cdt_{c}=\cdt_{t}}$
	\end{itemize}
	\item[+] else $\tdt_{t} = \tdt_{f}$, then $\tdo_{a}>\tbg_{b}$,\\
	 and we have : $\cdt_{t}=\cdt_{a}+\cdt_{f}$, $\tdt_{g}=\tdt_{b}$, and $\cdt_{g}=\cdt_{a}+\cdt_{b}$.
	 \begin{itemize}
		\item[+] if $\tdt_{f}=\tdo_{c}$, then $\tdt_{b}\leq\tbg_{b}$,\\
		 and so $\cdt_{f}=\cdt_{b}+\cdo_{c}$. Moreover $\tdt_{g}\leq\tbg_{b}$, thus $\boxed{\tdt_{u}=\tdo_{c}=\tdt_{t}}$, and $\cdt_{u}=\cdt_{g}+\cdo_{c}$, thus $\boxed{\cdt_{u}=\cdt_{a}+\cdt_{b}+\cdo_{c}=\cdt_{t}}$
		\item[+] else $\tdt_{f}=\tdt_{c}$, then $\tdo_{b}>\tbg_{b}$,\\
		 and so $\cdt_{f}=\cdt_{b}+\cdt_{c}$. Moreover $\tdt_{g}>\tbg_{b}$, thus $\boxed{\tdt_{u}=\tdt_{c}=\tdt_{t}}$, and $\cdt_{u}=\cdt_{g}+\cdt_{c}$, thus $\boxed{\cdt_{u}=\cdt_{a}+\cdt_{b}+\cdt_{c}=\cdo_{t}}$
	\end{itemize}
\end{itemize}
\end{itemize}
To conclude we have proved that $q_{a}\plus (q_{b}\plus q_{c}) = (q_{a} \plus q_{b}) \plus q_{c}$ and thus the operator $\plus$ is associative.
\\
This concludes the proof: $(\monoidS,\plus)$ is a monoid.
\end{proof}

\medbreak

\begin{proof}[Proof of lemma~\ref{lemma:OrderCompatible}.]
For the purpose of this proof we use notations established in proof of Lemma~\ref{lemma:Monoid} and introduce the following equations.
\[
    \tbg_{t} \geqslant \tbg_{u} \label{Order1} \tag{O1}
\]
\[
    (\cdo_{t},\tdo_{t})\moinslex (\cdo_{u},\tdo_{u}) \label{Order2} \tag{O2}
\]
\[
    (\cdt_{t},\tdt_{t})\moinslex (\cdt_{u},\tdt_{u}) \label{Order3} \tag{O3}
\]

Let us prove that the order $\moins$ is compatible with the operator $\plus$.
Let $q_{a},q_{b} \in \monoidS$ be such as $q_{a} \moins q_{b}$.
Let $q_{x}$ be in $\monoidS$.\\
We are going to show by disjunction of cases that $q_{a} \plus q_{x} \moins q_{b} \plus q_{x}$. Let $q_{t}=q_{a} \plus q_{x}$, and $q_{u}=q_{b} \plus q_{x}$. We want to prove $q_{t} \moins q_{u}$.
If $q_{a} \in \{e,\infty\}$ or $q_{b} \in  \{e,\infty\}$, then by definition of $\moins$ and $\plus$, the result is immediate.\\
Else, let $q_{a} = \res[a]$, $q_{b} = \res[b]$, $q_{x} = \res[x]$, $q_{t} = \res[t]$, $q_{u} = \res[u]$.\\
\begin{itemize}
    \item Let us prove that $q_u$ and $q_t$ satisfy \eqref{Order1}. We have $\tbg_{t}=\tbg_{a}$ and $\tbg_{u}=\tbg_{b}$. As $q_{a} \moins q_{b}$, we directly have $\boxed{\tbg_{t} \geqslant \tbg_{u}}$.
    \item Let us prove \eqref{Order2} i.e.  $(\cdo_{t},\tdo_{t})\moinslex (\cdo_{u},\tdo_{u})$.
    \begin{itemize}
	\item[+] if $\tdo_{a} \leq\tbg_{x}$, then $\cdo_{t} = \cdo_{a}+\cdo_{x}$ and $\tdo_{t}=\tdo_{x}$
	\begin{itemize}
		\item[+] if $\tdo_{b} \leqslant \tbg_{x}$, then $\cdo_{u}=\cdo_{b}+\cdo_{x}$ and $\tdo_{u}=\tdo_{x}$. As $q_{a} \moins q_{b}$, with \eqref{Order2} $\cdo_{a} \leqslant \cdo_{b}$, and thus we have $\cdo_{t}\leqslant \cdo_{u}$. As $\tdo_{t}=\tdo_{u}$, it comes $\boxed{(\cdo_{t},\tdo_{t})\moinslex (\cdo_{u},\tdo_{u})}$
		\item[+] else $\tdo_{b} >\tbg_{x}$, then $\cdo_{u}=\cdo_{b}+\cdt_{x}$ and $\tdo_{u}=\tdt_{x}$.
		\begin{itemize}
			\item[+] if $\cdo_{x}< \cdt_{x}$, then we have $\cdo_{t}<\cdo_{u}$ and thus $\boxed{(\cdo_{t},\tdo_{t})\moinslex (\cdo_{u},\tdo_{u})}$.
			\item[+] else $\cdo_{x}=\cdt_{x}$, and as $q_{x} \in \monoidS$, \eqref{MonoidP2} ensures we have $\tdo_{x} \leqslant \tdt_{x}$, thus $\tdo_{t}\leqslant \tdo_{u}$. As $\cdo_{t} \leqslant \cdo_{u}$, it comes $\boxed{(\cdo_{t},\tdo_{t})\moinslex (\cdo_{u},\tdo_{u})}$.
		\end{itemize}
	\end{itemize}
	\item[+] else $\tdo_{a} >\tbg_{x}$, then $\cdo_{t} = \cdo_{a}+\cdt_{x}$ and $\tdo_{t}=\tdt_{x}$.
	\begin{itemize}
		\item[+] if $\tdo_{b} \leq\tbg_{x}$, then $\cdo_{u}=\cdo_{b}+\cdo_{x}$ and $\tdo_{u}=\tdo_{x}$. In this case we have $\tdo_{b}<\tdo_{a}$, and thus by definition of $\moins$, since $q_{a}\moins q_{b}$, $\cdo_{a}<\cdo_{b}$.
		\begin{itemize}
			\item[+] if $\cdo_{x}=\cdt_{x}$, then $\cdo_{t}<\cdo_{u}$ and thus $\boxed{(\cdo_{t},\tdo_{t})\moinslex (\cdo_{u},\tdo_{u})}$.
			\item[+] else  $\cdo_{x}=\cdt_{x}-1$, and as $c_{x} \in \monoidS$, \eqref{MonoidP1} ensures that we have $\tdt_{x}\leqslant \tdo_{x}$, thus $\tdo_{t}\leqslant \tdo_{u}$. We have $\boxed{(\cdo_{t},\tdo_{t})\moinslex (\cdo_{u},\tdo_{u})}$.
		\end{itemize}
		\item[+] else $\tdo_{b} >\tbg_{x}$, then $\cdo_{u}=\cdo_{b}+\cdt_{x}$ and $\tdo_{u}=\tdt_{x}$. As $q_{a}\moins q_{b}$, we are sure that $\cdo_{t}\leqslant \cdo_{u}$ with \eqref{Order2}, and as $\tdo_{u}=\tdo_{t}$ we effectively have $\boxed{(\cdo_{t},\tdo_{t})\moinslex (\cdo_{u},\tdo_{u})}$.
	\end{itemize}
    \end{itemize}
    \item Let us prove \eqref{Order3} i.e $(\cdt_{t},\tdt_{t})\moinslex (\cdt_{u},\tdt_{u})$.
    \begin{itemize}
	\item[+] if $\tdt_{a} \leqslant \tbg_{x}$, then $\cdt_{t} = \cdt_{a}+\cdo_{x}$ and $\tdt_{t}=\tdo_{x}$
	\begin{itemize}
		\item[+] if $\tdt_{b} \leqslant \tbg_{x}$, then $\cdt_{u}=\cdt_{b}+\cdo_{x}$ and $\tdt_{u}=\tdo_{x}$. As $q_{a} \moins q_{b}$, with \eqref{Order3} $\cdt_{a} \leqslant \cdt_{b}$, and thus we have $\cdt_{t}\leqslant \cdt_{u}$ and $\tdt_{t}=\tdt_{u}$, so $\boxed{(\cdt_{t},\tdt_{t})\moinslex (\cdt_{u},\tdt_{u})}$
		\item[+] else $\tdo_{b} >\tbg_{x}$, then $\cdt_{u}=\cdt_{b}+\cdt_{x}$ and $\tdt_{u}=\tdt_{x}$.
		\begin{itemize}
			\item[+] if $\cdo_{x}< \cdt_{x}$, then we have $\cdo_{t}<\cdo_{u}$ and thus $\boxed{(\cdt_{t},\tdt_{t})\moinslex (\cdt_{u},\tdt_{u})}$
			\item[+] else $\cdo_{x}=\cdt_{x}$, and as $q_{x} \in \monoidS$, \eqref{MonoidP2} ensures that we have $\tdo_{x} \leqslant \tdt_{x}$, thus $\tdt_{t}\leqslant \tdt_{u}$. As $\cdt_{t}\leqslant \cdt_{u}$, it comes $\boxed{(\cdt_{t},\tdt_{t})\moinslex (\cdt_{u},\tdt_{u})}$
		\end{itemize}
	\end{itemize}
	\item[+] else $\tdt_{a} >\tbg_{x}$, then $\cdt_{t} = \cdt_{a}+\cdt_{x}$ and $\tdt_{t}=\tdt_{x}$.
	\begin{itemize}
		\item[+] if $\tdt_{b} \leqslant\tbg_{x}$, then $\cdt_{u}=\cdt_{b}+\cdo_{x}$ and $\tdt_{u}=\tdo_{x}$. In this case we have $\tdo_{b}<\tdo_{a}$, and thus since $q_{a}\moins q_{b}$, \eqref{Order2} ensures that $\cdo_{a}<\cdo_{b}$.
		\begin{itemize}
			\item[+] if $\cdo_{x}=\cdt_{x}$, then $\cdt_{t}<\cdt_{u}$ and thus $\boxed{(\cdt_{t},\tdt_{t})\moinslex (\cdt_{u},\tdt_{u})}$.
			\item[+] else  $\cdo_{x}=\cdt_{x}-1$, and as $c_{x} \in \monoidS$, \eqref{MonoidP1} ensures that we have $\tdt_{x}\leqslant \tdo_{x}$, thus $\tdt_{t}\leqslant \tdt_{u}$. We have $\boxed{(\cdt_{t},\tdt_{t})\moinslex (\cdt_{u},\tdt_{u})}$
		\end{itemize}
		\item[+] else $\tdt_{b} >\tbg_{x}$, then $\cdt_{u}=\cdt_{b}+\cdt_{x}$ and $\tdt_{u}=\tdt_{x}$. As $q_{a}\moins q_{b}$, we are sure with \eqref{Order3} that $\cdt_{t}\leqslant \cdt_{u}$, and as $\tdt_{u}=\tdt_{t}$ we effectively have $\boxed{(\cdt_{t},\tdt_{t})\moinslex (\cdt_{u},\tdt_{u})}$
	\end{itemize}
    \end{itemize}
\end{itemize}
With the three above points, we have proved that $q_{a} \plus q_{x} \moins q_{b} \plus q_{x}$.\\

We are now going to show that $q_{x} \plus q_{a} \moins q_{x} \plus q_{b}$. We denote by $q_{t}=q_{x} \plus q_{a}$, and by $q_{u}=q_{x} \plus q_{b}$. Thus we want to prove that $q_{t}\moins q_{u}$.
\begin{itemize}
    \item Let us prove \eqref{Order1} i.e. $\tbg_{t} \geqslant \tbg_{u}$.
    We have $\tbg_{t}=\tbg_{x}$ and $\tbg_{u}=\tbg_{x}$, so we directly have $\boxed{\tbg_{t} \geqslant \tbg_{u}}$.
    \item Let us prove \eqref{Order2} i.e. $(\cdo_{t},\tdo_{t})\moinslex (\cdo_{u},\tdo_{u})$.
    \begin{itemize}
    	\item[+] if $\tdo_{x} \leqslant\tbg_{b}$, then $\tdo_{x} \leqslant\tbg_{a}$, and so we have : $\cdo_{u} = \cdo_{x}+\cdo_{b}$, $\tdo_{u}=\tdo_{b}$, $\cdo_{t} = \cdo_{x}+\cdo_{a}$ and $\tdo_{t}=\tdo_{a}$. So as $q_{a} \moins q_{b}$, we directly have $\boxed{(\cdo_{t},\tdo_{t})\moinslex (\cdo_{u},\tdo_{u})}$.
    	\item[+] else $\tdo_{x}>\tbg_{b}$, then  $\cdo_{u} = \cdo_{x}+\cdt_{b}$ and $\tdo_{u}=\tdt_{b}$.
    	\begin{itemize}
    		\item[+] if $\tdo_{x}\leqslant\tbg_{a}$, then $\cdo_{t} = \cdo_{x} + \cdo_{a}$. Since $q_{a} \in \monoidS$, we have $\cdo_{a}\leqslant\cdt_{a}$, and since $q_{a} \moins q_{b}$, \eqref{Order2} ensures that $\cdt_{a}\leqslant\cdt_{b}$. Thus we have $\cdo_{t}\leq \cdo_{u}$.
    		\begin{itemize}
    			\item[+] if $\cdo_{a} < \cdt_{b}$, then $\cdo_{t} <\cdo_{u}$ and thus $\boxed{(\cdo_{t},\tdo_{t})\moinslex(\cdo_{u},\tdo_{u})}$.
    			\item[+] else $\cdo_{a} = \cdt_{b}$, and thus $\cdo_{a} = \cdt_{a}=\cdt_{b}$. Since $q_{a} \in \monoidS$, with converse of \eqref{MonoidP2}, it comes that $\tdo_{a} \leqslant\tdt_{a}$. Moreover as $q_{a} \moins q_{b}$ and $\cdt_{a}=\cdt_{b}$, we have $\tdt_{a}\leqslant\tdt_{b}$. By transitivity  $\tdo_{a}\leqslant\tdt_{b}$, which proves that $\boxed{(\cdo_{t},\tdo_{t})\moinslex(\cdo_{u},\tdo_{u})}$.
    		\end{itemize}
    		\item[+] else $\tdo_{x}> \tbg_{a}$, then $\cdo_{t} = \cdo_{x} + \cdt_{a}$ and $\tdo_{t}=\tdt_{a}$. Since $q_{a} \moins q_{b}$, we have $\boxed{(\cdo_{t},\tdo_{t})\moinslex (\cdo_{u},\tdo_{u})}$.
    	\end{itemize}
    \end{itemize}

\item Let us prove \eqref{Order3} i.e. $(\cdt_{t},\tdt_{t})\moinslex (\cdt_{u},\tdt_{u})$.
\begin{itemize}
	\item[+] if $\tdt_{x} \leqslant \tbg_{b}$, then $\tdt_{x} \leqslant \tbg_{a}$, and so we have : $\cdt_{u} = \cdt_{x}+\cdo_{b}$, $\tdt_{u}=\tdo_{b}$, $\cdt_{t} = \cdt_{x}+\cdo_{a}$ and $\tdt_{t}=\tdo_{a}$. So as $q_{a} \moins q_{b}$, we directly have $\boxed{(\cdt_{t},\tdt_{t})\leq (\cdt_{u},\tdt_{u})}$.
	\item[+] else $\tdt_{x}>\tbg_{b}$, then  $\cdt_{u} = \cdt_{x}+\cdt_{b}$ and $\tdt_{u}=\tdt_{b}$.
	\begin{itemize}
		\item[+] if $\tdt_{x}\leqslant \tbg_{a}$, then $\cdt_{t} = \cdt_{x} + \cdo_{a}$ and $\tdt_{t}=\tdo_{a}$. Since $q_{b} \in \monoidS$, we have $\cdo_{b}\leqslant \cdt_{b}$, and since $q_{a} \moins q_{b}$, \eqref{Order2} ensures that $\cdo_{a}\leq \cdo_{b}$. So we have $\cdo_{a}\leq \cdo_{b} \leq \cdt_{b}$ and $\cdt_{t}\leq \cdt_{u}$.
		\begin{itemize}
			\item[+] if $\cdo_{a} < \cdt_{b}$, then $\cdo_{t} <\cdo_{u}$ and thus $\boxed{(\cdo_{t},\tdo_{t})\moinslex (\cdo_{u},\tdo_{u})}$.
			\item[+] else $\cdo_{a} = \cdt_{b}$, and thus $\cdo_{a} = \cdo_{b}=\cdt_{b}$. As $q_{b} \in \monoidS$, with converse of \eqref{MonoidP2} it comes that $\tdo_{b} \leqslant \tdt_{b}$. Moreover as $q_{a} \moins q_{b}$ and $\cdo_{a}=\cdo_{b}$, we have with \eqref{Order2} $\tdo_{a}\leqslant \tdo_{b}$. By transitivity  $\tdo_{a}\leqslant \tdt_{b}$, which proves that $\boxed{(\cdo_{t},\tdo_{t})\moinslex (\cdo_{u},\tdo_{u})}$.
		\end{itemize}
		\item[+] else $\tdt_{x}> \tbg_{a}$, then $\cdt_{t} = \cdt_{x} + \cdt_{a}$ and $\tdt_{t}=\tdt_{a}$. Since $q_{a} \moins q_{b}$, we have $\boxed{(\cdt_{t},\tdt_{t})\moins (\cdt_{u},\tdt_{u})}$.
	\end{itemize}
\end{itemize}
\end{itemize}
With the three above points, we have proved that $q_{x} \plus q_{a} \moins q_{x} \plus q_{a}$.\\

Finally we have proved both $q_{x} \plus q_{a} \moins q_{x} \plus q_{b}$ and $q_{a} \plus q_{x} \moins q_{b} \oplus q_{x}$, thus the addition is compatible with the order.
\end{proof}

\medbreak

\begin{proof}[Proof of lemma~\ref{lemma:Meet}.] In this proof we use notations established in proofs of Lemmas~\ref{lemma:Monoid} and~\ref{lemma:OrderCompatible}. 
We first prove by disjunction of cases the stability of $\meet$. It will then follow that $\meet$ defines a greatest lower bound for any pair of elements belonging to $\monoidS$.
Let $q_{a},q_{b} \in \monoidS,\enskip q= q_{a} \meet q_{b}$. We are going to show that $q \in \monoidS$. If $q_{a} \in \{e,\infty\}$ or $q_{b} \in  \{e,\infty\}$, then by definition of $\moins$ and $\meet$, the result is immediate. 
Suppose now that let $q_{a} = \res[a]$, $q_{b} = \res[b]$ and $q= \res$.\\

\begin{itemize}
    \item We first prove that $q$ satisfies \eqref{MonoidP1}. We assume that we have $\tdo < \tdt$.
\begin{itemize}
	\item[+] if $(\tdo =\tdo_{a} \text{ and } \tdt =\tdt_{a})$ or $(\tdo =\tdo_{b} \text{ and } \tdt =\tdt_{b})$ , then it implies $(\cdo =\cdo_{a} \text{ and } \cdt =\cdt_{a})$ or $(\cdo =\cdo_{b} \text{ and } \cdt =\cdt_{b})$. We use \eqref{MonoidP1} with $q_{a}$ or $q_{b}$ and we directly have that $\boxed{\cdo = \cdt}$.
	\item[+] if  $(\tdo =\tdo_{a} \text{ and } \tdt =\tdt_{b})$, then by definition of the meet $\cdo = \cdo_{a} \leqslant \cdo_{b}$ and $\cdt = \cdt_{b} \leq \cdt_{a}$.
	\begin{itemize}
		\item[+] if $\cdt_{a}=\cdt_{b}$, then $\tdt_{b} \leqslant \tdt_{a}$ by definition of the order with \eqref{Order3}. As $\tdo < \tdt$, it comes that $\tdo_{a} < \tdt_{a}$, and as $q_{a} \in \monoidS$ we can use \eqref{MonoidP1} to show that $\cdo_{a}=\cdt_{a}$. Thus we have $\boxed{\cdo = \cdt}$.
		\item[+] else $\cdt_{a} >\cdt_{b}$, and so we have the followings inequalities : $\cdo_{a}\leq \cdo_{b} \leq \cdt_{b} < \cdt_{a}$. Since $q_{a} \in \monoidS$ and $\cdo_{a} <\cdt_{a}$, it comes that $\cdo_{a}=\cdt_{a} -1 $ which implies that $\cdo_{a}=\cdo_{b}=\cdt_{b}$ and thus $\boxed{\cdo = \cdt}$.
	\end{itemize}
	\item[+] else $(\tdo =\tdo_{b} \text{ and } \tdt =\tdt_{a})$, and we conclude by symmetry of $q_{a}$ and $q_{b}$.
\end{itemize}

\item We nos prove that $q$ satisfies  \eqref{MonoidP2}. We assume that we have $\tdt < \tdo$.
\begin{itemize}
	\item[+] if $(\tdo =\tdo_{a} \text{ and } \tdt =\tdt_{a})$ or $(\tdo =\tdo_{b} \text{ and } \tdt =\tdt_{b})$, then it implies $(\cdo =\cdo_{a} \text{ and } \cdt =\cdt_{a})$ or $(\cdo =\cdo_{b} \text{ and } \cdt =\cdt_{b})$. We use \eqref{MonoidP1} with $q_{a}$ or $q_{b}$ and we directly have that $\boxed{\cdo = \cdt-1}$.
	\item[+] if  $(\tdo =\tdo_{b} \text{ and } \tdt =\tdt_{a})$, then by definition of the meet $\cdo = \cdo_{b} \leqslant \cdo_{a}$ and $\cdt = \cdt_{a} \leqslant \cdt_{b}$.
	\begin{itemize}
		\item[+] if $\cdo_{a}=\cdo_{b}$, then $\tdo_{b} \leqslant \tdo_{a}$. As $\tdt < \tdo$, it comes that $\tdt_{a} < \tdo_{a}$, and as $q_{a} \in \monoidS$ we can use \eqref{MonoidP2} to show that $\cdo_{a}=\cdt_{a}-1$. Thus we have $\boxed{\cdo = \cdt-1}$.
		\item[+] else $\cdo_{a} >\cdo_{b}$, and so we have the inequalities : $\cdo_{b}< \cdo_{a}\leq \cdt_{a} \leq \cdt_{b}$. $q_{b}, q_{a} \in \monoidS$, thus it comes that $\cdo_{b}=\cdt_{b} -1 $ and $\cdo_{a}=\cdt_{a}$, which implies that $\cdo_{b}=\cdo_{a}-1=\cdt_{a}-1=\cdt_{b}-1$ and thus $\boxed{\cdo = \cdt-1}$.
	\end{itemize}
	\item[+] else $(\tdo =\tdo_{a} \text{ and } \tdt =\tdt_{b})$, and we conclude by symmetry of $q_{a}$ and $q_{b}$.
\end{itemize}

\item We finally prove that $q$ satisfies  \eqref{MonoidP3}. We assume we have $\tdo = \tdt$.
First, as by construction of the meet and the fact that $q_{a}, q_{b} \in \monoidS$ we have $\cdo \leqslant \cdo_{a} \leqslant \cdt_{a}$, $\cdo \leqslant \cdo_{b} \leqslant \cdt_{b}$ and $\cdt =\cdt_{a}$ or  $\cdt = \cdt_{b}$. Thus we directly have : $\cdo\leqslant \cdt$.
\begin{itemize}
	\item[+] if $\cdo = \cdt$, \eqref{MonoidP3} is satisfied.
	\item[+] else $\cdo < \cdt$.
	\begin{itemize}
		\item[+] if $(\cdo = \cdo_{a} \text{ and } \cdt = \cdt_{a})$ or $(\cdo = \cdo_{b} \text{ and } \cdt = \cdt_{b})$, then result comes directly from the fact that $q_{a} \in \monoidS$ or $q_{b} \in \monoidS$.
		\item[+] else if $\cdo = \cdo_{a}$ and $\cdt=\cdt_{b}$, then by definition of the meet $\cdo=\cdo_{a} \leqslant \cdo_{b}$ and $\cdt=\cdt_{b}\leqslant \cdt_{a}$. So $\cdo_{a}<\cdt_{b}\leqslant\cdt_{a}$ and thus $\cdo_{a}=\cdt_{a}-1$ and $\cdt_{b}=\cdt_{a}$. So it finally comes that $\cdt=\cdt_{a}$ and as $\cdo=\cdo_{a}$ we have $\cdo=\cdt-1$ and property \eqref{MonoidP3} is satisfied.
		\item[+] else $\cdo=\cdo_{b}$ and $\cdt=\cdt_{a}$, and we conclude by symmetry of $q_{a}$ and $q_{b}$.
	\end{itemize}
\end{itemize}
\end{itemize}

All properties of $\monoidS$ are satisfied, proving the stability of $\meet$.
\\
It then follows from the definitions of the order $\moins$ and of the meet $\meet$ that $q$ is the greatest lower bound on $q_a$ and $q_b$.
\end{proof}

\medbreak

\begin{proof}[Proof of Proposition~\ref{prop:FinalCost}.]
Let $\shift$ be a shift and $P$ the origin-destination path associated. As seen with the proposition~\ref{prop:bijectionDigraph}, $P$ always exists. We denote $q_{P} =((q_{P}^{\oscenario})_{\oscenario\in \Oscenario},l)$ its resource.\\
Let us prove first that $l = \cw(\he[\sh] - \hb[\sh]) - \sum_{j \in \sh} \lambda_j $. By definition of the monoid $(\bbR,+)$, $l$ is the sum of the second parts of each arc resource. Since for any job $j$ we initialize an arc of the form $\big((j,\hb,\cdot),(j',\hb,\cdot)\big)$ with a resource $((q_{a}^{\oscenario})_{\oscenario \in \Oscenario},-\lambda_j)$, and an arc of the form $\big((j,\hb,\cdot),\he\big)$ with a resource $((q_{a}^{\oscenario})_{\oscenario \in \Oscenario},\cw(\he - \hb) - \lambda_j)$, and since every $o$-$d$ path contains exactly one arc of the form $\big((j,\hb,\cdot),\he\big)$ and one vertex of the form $(j,\hb,\cdot)$ for every $j \in \sh$, then we obtain $l = \cw(\he[\sh] - \hb[\sh]) - \sum_{j \in \sh} \lambda_j $.\\

\noindent Let us now prove by induction on the number $n$ of arcs in $P$ that,

\begin {table}[h]

\begin{tabular}{m{14cm}m{1cm}}
	for every $\oscenario \in \Oscenario$, resource $q_{P}^{\oscenario} = \res[\oscenario]$, where $\tbg_{P}(\omega)$ is the time at which the first job of $\sh$ starts under $\omega$,
$\tdo_{P}(\omega)$ the time at which the last job of $\sh$ under $\omega$ ends if non-rescheduled, else has value $-\infty$,
$\tdt_{P}(\omega)$ the time at which the last job of $\sh$ under $\omega$ would have ended if the first job of $\sh$ was rescheduled under $\omega$, and if that job is non-rescheduled, else has value $-\infty$,
$\cdo_{P}(\omega)$ is the number of rescheduled job in $\sh$ under~$\omega$,
and $\cdt_{P}(\omega)$ the number of jobs of $\sh$ that would have been rescheduled if the first job of $\sh$ was rescheduled under $\omega$. &(*)
\end{tabular}
\end {table}

% \\
% By induction on the number of arcs in the path p, we are going to prove (*): \emph{proposition~\ref{prop:FinalCost} concerning resource $q_{P}^{\oscenario}$ is true}.\\
\noindent\textbf{Initialization}: the result for $n= 1$ is immediate from the definition of arc resources. \\
\textbf{Legacy}: Assume (*) true for paths of length $n$. Consider a path $P$ of length n+1 and denote $\sh$ the associated shift or sequence of tasks. $P$ can be decomposed as $Q + a$, with $Q$ a path of length n and $a$ a single arc.\\
Let consider a scenario $\oscenario \in \oscenario$.\\
We have $\tbg_{q}(\oscenario) = \tbg_{P}(\oscenario)$, and by hypothesis, (*) is true for path $Q$, thus $\tbg_{q}(\oscenario)$ is the time at which the first job of path $Q$ starts under $\oscenario$, which is also the first job of path $P$.\\
If the succession of $Q$ with $a$ is possible, i.e. $\tdo_{q}(\oscenario) \leqslant \tbg_{a}(\oscenario)$, then $\cdo_{P}(\oscenario) = \cdo_{q}(\oscenario) + \cdo_{a}(\oscenario)$, and $\tdo_{P}(\omega) = \tdo_{a}(\omega)$.
As (*) is true for $Q$, as $a$ is an arc, and as the last job of $P$ is the job represented by arc $a$:
\begin{itemize}
    \item if $\tdo_{a}(\omega) = \xe[j](\omega)$, then $\tdo_{P}(\omega) = \xe[j](\omega)$ which corresponds to the case where $\sh$ terminates with job $j$
    \item if $\tdo_{a}(\omega) = \xe[j](\omega) + \tbr$, then $\tdo_{P}(\omega) = \xe[j](\omega)$ which corresponds to the case where $\sh$ terminates with a break
    \item else $\tdo_{a}(\omega) = -\infty$, then $\tdo_{P}(\omega) = -\infty$, which corresponds to the case where the last job is very late
\end{itemize} 
In all cases, (*) is true in what concerns $\tdo_{P}(\omega)$ since the last job of $P$ is the one from arc $a$.\\
Concerning the cost:\begin{itemize}
    \item if $\vl[j](\omega) = 1$, then $\cdo_{a}(\oscenario) =1$ and thus $\cdo_{P}(\oscenario)$ is the number of rescheduled job in $\sh$ under $\oscenario$ since arc $a$ represents an always rescheduled job.
    \item else $\vl[j](\omega) = 0$, then $\cdo_{a}(\oscenario) =0$ and thus $\cdo_{P}(\oscenario)$ is the number of rescheduled job in $\sh$ under $\oscenario$, since the succession of $Q$ with $a$ is possible and thus does not generated any reschuled cost.
\end{itemize} 
We easily show in the same way that $\tdt(\omega)$ is the time at which the last job of $\sh$ under $\omega$ would have end if the first job of $\sh$ was rescheduled under $\omega$ and if the last job is non-rescheduled, else is $-\infty$, and $\cdt_{P}(\omega)$ the number of jobs of $\sh$ that would have been rescheduled if the first job of $\sh$ was rescheduled under $\omega$.\\
Else the succession of $Q$ with $a$ is not possible, i.e. $\tdo_{q}(\omega)  > \tbg_{a}(\omega)$. As (*) is true for $Q$ and as $a$ is an arc, by using the same argumentss than previously regarding initialization of resource on the arcs in each possible case, we conclude that (*) is true for any path $P$ of length n+1.
% \inlJP{Est ce que ce shortcut te va ou il faut vraiment que je detaille tous les cas?}
\\
%Finally we have $\cdo_{\oscenario} = c_{s}(\shift,(\xibv(\oscenario),\xifv(\oscenario))_{\task \in \shift})$, thus 
Since any $o$-$d$ path $P$ has a finite number of vertices, and since (*) is true for every n, its concludes the proof.
\end{proof}
\medbreak
\begin{proof}[Proof of Theorem~\ref{prop:FinalProp}.]
The bijection is ensured by Proposition~\ref{prop:bijectionDigraph}. \\
Let $q_{P} = \left(\bigoplus_{a \in P}r_a\right)$. With Proposition~\ref{prop:FinalCost}, we have $q_{P} = \left(
\left(\begin{array}{c}
            \mathsf{bg}\big(\tbg_{P}(\omega)\big)\\
            \mathsf{do}\big(\cdo_{P}(\omega),\tdo_{P}(\omega)\big)\\
            \mathsf{dt}\big(\cdt_{P}(\omega),\tdt_{P}(\omega)\big)
    \end{array}\right)_{\oscenario \in \Oscenario}
, \cw(\he[\sh] - \hb[\sh]) - \sum_{j \in \sh} \lambda_j \right)$. where $[\hb[\sh],\he[\sh]]$ corresponds to the time interval of shift $\sh$ and $\cdo_{P}(\omega)$ is the number of rescheduled job in $\sh$ under $\omega$.\\
By definition of the cost function: $c_{\monoidS}\left(\left(\begin{array}{c}
            \mathsf{bg}\big(\tbg_{P}(\omega)\big)\\
            \mathsf{do}\big(\cdo_{P}(\omega),\tdo_{P}(\omega)\big)\\
            \mathsf{dt}\big(\cdt_{P}(\omega),\tdt_{P}(\omega)\big)
    \end{array}\right)\right)= \cdo_{P}(\oscenario)$, and thus with the definition of the cost function on a resource, we obtain
    $$ c\left(\left(\bigoplus_{a \in P}r_a\right)\right)=\frac{\alpha * \cbu}{\sharp \oscenario}\sum_{\oscenario \in \oscenario}c_{\monoidS}(q^{\oscenario}) + \cw(\he[\sh] - \hb[\sh]) - \sum_{j \in \sh} \lambda_j = \costshift - \sum_{\task \in \shift} \lambda_{\task}$$
which concludes the Theorem~\ref{prop:FinalProp}.
\end{proof}

\end{appendix}

\end{document}